\newtheorem{theorem}{Theorem}
\newtheorem{axiom}[theorem]{Axiom}
\newtheorem{conjecture}[theorem]{Conjecture}
\newtheorem{corollary}[theorem]{Corollary}
\newtheorem{definition}[theorem]{Definition}
\newtheorem{example}[theorem]{Example}
\newtheorem{exercise}[theorem]{Exercise}
\newtheorem{lemma}[theorem]{Lemma}
\newtheorem{proposition}[theorem]{Proposition}
\newtheorem{remark}[theorem]{Remark}
\newenvironment{proof}[1][Proof]{\textbf{#1.} }{\ \rule{0.5em}{0.5em}}
\let\pdfoutput=\undefined\fi
\chardef\@x10\chardef\@xv60
\def\tcitime{
\def\@time{%
  \@minute\time\@hour\@minute\divide\@hour\@xv
  \ifnum\@hour<\@x 0\fi\the\@hour:%
  \multiply\@hour\@xv\advance\@minute-\@hour
  \ifnum\@minute<\@x 0\fi\the\@minute
  }}%
\def\x@hyperref#1#2#3{%
   \catcode`\~ = 12
   \catcode`\$ = 12
   \catcode`\_ = 12
   \catcode`\# = 12
   \catcode`\& = 12
   \y@hyperref{#1}{#2}{#3}%
}
\def\y@hyperref#1#2#3#4{%
   #2\ref{#4}#3
   \catcode`\~ = 13
   \catcode`\$ = 3
   \catcode`\_ = 8
   \catcode`\# = 6
   \catcode`\& = 4
}
\def\QCTOpt[#1]#2{%
  \def\QCTOptB{#1}
  \def\QCTOptA{#2}
}
\def\QCTNOpt#1{%
  \def\QCTOptA{#1}
  \let\QCTOptB\empty
}
\def\Qct{%
  \@ifnextchar[{%
    \QCTOpt}{\QCTNOpt}
}
\def\QCBOpt[#1]#2{%
  \def\QCBOptB{#1}%
  \def\QCBOptA{#2}%
}
\def\QCBNOpt#1{%
  \def\QCBOptA{#1}%
  \let\QCBOptB\empty
}
\def\Qcb{%
  \@ifnextchar[{%
    \QCBOpt}{\QCBNOpt}%
}
\def\PrepCapArgs{%
  \ifx\QCBOptA\empty
    \ifx\QCTOptA\empty
      {}%
    \else
      \ifx\QCTOptB\empty
        {\QCTOptA}%
      \else
        [\QCTOptB]{\QCTOptA}%
      \fi
    \fi
  \else
    \ifx\QCBOptA\empty
      {}%
    \else
      \ifx\QCBOptB\empty
        {\QCBOptA}%
      \else
        [\QCBOptB]{\QCBOptA}%
      \fi
    \fi
  \fi
}
\def\GRAPHICSPS#1{%
 \ifcase\GRAPHICSTYPE
   \special{ps: #1}%
 \or
   \special{language "PS", include "#1"}%
 \fi
}%
\def\graffile#1#2#3#4{%
    \bgroup
	   \@inlabelfalse
       \leavevmode
       \@ifundefined{bbl@deactivate}{\def~{\string~}}{\activesoff}%
        \raise -#4 \BOXTHEFRAME{%
           \hbox to #2{\raise #3\hbox to #2{\null #1\hfil}}}%
    \egroup
}%
\def\draftbox#1#2#3#4{%
 \leavevmode\raise -#4 \hbox{%
  \frame{\rlap{\protect\tiny #1}\hbox to #2%
   {\vrule height#3 width\z@ depth\z@\hfil}%
  }%
 }%
}%
\let\nographics=\@msidraft
\newif\ifwasdraft
\def\GRAPHIC#1#2#3#4#5{%
   \ifnum\@msidraft=\@ne\draftbox{#2}{#3}{#4}{#5}%
   \else\graffile{#1}{#3}{#4}{#5}%
   \fi
}
\def\addtoLaTeXparams#1{%
    \edef\LaTeXparams{\LaTeXparams #1}}%
\newif\ifBoxFrame \BoxFramefalse
\newif\ifOverFrame \OverFramefalse
\newif\ifUnderFrame \UnderFramefalse
\def\BOXTHEFRAME#1{%
   \hbox{%
      \ifBoxFrame
         \frame{#1}%
      \else
         {#1}%
      \fi
   }%
}
\def\doFRAMEparams#1{\BoxFramefalse\OverFramefalse\UnderFramefalse\readFRAMEparams#1\end}%
\def\readFRAMEparams#1{%
 \ifx#1\end%
  \let\next=\relax
  \else
  \ifx#1i\dispkind=\z@\fi
  \ifx#1d\dispkind=\@ne\fi
  \ifx#1f\dispkind=\tw@\fi
  \ifx#1t\addtoLaTeXparams{t}\fi
  \ifx#1b\addtoLaTeXparams{b}\fi
  \ifx#1p\addtoLaTeXparams{p}\fi
  \ifx#1h\addtoLaTeXparams{h}\fi
  \ifx#1X\BoxFrametrue\fi
  \ifx#1O\OverFrametrue\fi
  \ifx#1U\UnderFrametrue\fi
  \ifx#1w
    \ifnum\@msidraft=1\wasdrafttrue\else\wasdraftfalse\fi
    \@msidraft=\@ne
  \fi
  \let\next=\readFRAMEparams
  \fi
 \next
 }%
\def\IFRAME#1#2#3#4#5#6{%
      \bgroup
      \let\QCTOptA\empty
      \let\QCTOptB\empty
      \let\QCBOptA\empty
      \let\QCBOptB\empty
      #6%
      \parindent=0pt
      \leftskip=0pt
      \rightskip=0pt
      \setbox0=\hbox{\QCBOptA}%
      \@tempdima=#1\relax
      \ifOverFrame
          \typeout{This is not implemented yet}%
          \show\HELP
      \else
         \ifdim\wd0>\@tempdima
            \advance\@tempdima by \@tempdima
            \ifdim\wd0 >\@tempdima
               \setbox1 =\vbox{%
                  \unskip\hbox to \@tempdima{\hfill\GRAPHIC{#5}{#4}{#1}{#2}{#3}\hfill}%
                  \unskip\hbox to \@tempdima{\parbox[b]{\@tempdima}{\QCBOptA}}%
               }%
               \wd1=\@tempdima
            \else
               \textwidth=\wd0
               \setbox1 =\vbox{%
                 \noindent\hbox to \wd0{\hfill\GRAPHIC{#5}{#4}{#1}{#2}{#3}\hfill}\\%
                 \noindent\hbox{\QCBOptA}%
               }%
               \wd1=\wd0
            \fi
         \else
            \ifdim\wd0>0pt
              \hsize=\@tempdima
              \setbox1=\vbox{%
                \unskip\GRAPHIC{#5}{#4}{#1}{#2}{0pt}%
                \break
                \unskip\hbox to \@tempdima{\hfill \QCBOptA\hfill}%
              }%
              \wd1=\@tempdima
           \else
              \hsize=\@tempdima
              \setbox1=\vbox{%
                \unskip\GRAPHIC{#5}{#4}{#1}{#2}{0pt}%
              }%
              \wd1=\@tempdima
           \fi
         \fi
         \@tempdimb=\ht1
         \advance\@tempdimb by -#2
         \advance\@tempdimb by #3
         \leavevmode
         \raise -\@tempdimb \hbox{\box1}%
      \fi
      \egroup%
}%
\def\DFRAME#1#2#3#4#5{%
  \vspace\topsep
  \hfil\break
  \bgroup
     \leftskip\@flushglue
	 \rightskip\@flushglue
	 \parindent\z@
	 \parfillskip\z@skip
     \let\QCTOptA\empty
     \let\QCTOptB\empty
     \let\QCBOptA\empty
     \let\QCBOptB\empty
	 \vbox\bgroup
        \ifOverFrame 
           #5\QCTOptA\par
        \fi
        \GRAPHIC{#4}{#3}{#1}{#2}{\z@}%
        \ifUnderFrame 
           \break#5\QCBOptA
        \fi
	 \egroup
  \egroup
  \vspace\topsep
  \break
}%
\def\FFRAME#1#2#3#4#5#6#7{%
  \@ifundefined{floatstyle}
    {
     \begin{figure}[#1]%
    }
    {
	 \ifx#1h
      \begin{figure}[H]%
	 \else
      \begin{figure}[#1]%
	 \fi
	}
  \let\QCTOptA\empty
  \let\QCTOptB\empty
  \let\QCBOptA\empty
  \let\QCBOptB\empty
  \ifOverFrame
    #4
    \ifx\QCTOptA\empty
    \else
      \ifx\QCTOptB\empty
        \caption{\QCTOptA}%
      \else
        \caption[\QCTOptB]{\QCTOptA}%
      \fi
    \fi
    \ifUnderFrame\else
      \label{#5}%
    \fi
  \else
    \UnderFrametrue%
  \fi
  \begin{center}\GRAPHIC{#7}{#6}{#2}{#3}{\z@}\end{center}%
  \ifUnderFrame
    #4
    \ifx\QCBOptA\empty
      \caption{}%
    \else
      \ifx\QCBOptB\empty
        \caption{\QCBOptA}%
      \else
        \caption[\QCBOptB]{\QCBOptA}%
      \fi
    \fi
    \label{#5}%
  \fi
  \end{figure}%
 }%
\def\makeactives{
  \catcode`\"=\active
  \catcode`\;=\active
  \catcode`\:=\active
  \catcode`\'=\active
  \catcode`\~=\active
}
   \gdef\activesoff{%
      \def"{\string"}%
      \def;{\string;}%
      \def:{\string:}%
      \def'{\string'}%
      \def~{\string~}%
    }
\def\FRAME#1#2#3#4#5#6#7#8{%
 \bgroup
 \ifnum\@msidraft=\@ne
   \wasdrafttrue
 \else
   \wasdraftfalse%
 \fi
 \def\LaTeXparams{}%
 \dispkind=\z@
 \def\LaTeXparams{}%
 \doFRAMEparams{#1}%
 \ifnum\dispkind=\z@\IFRAME{#2}{#3}{#4}{#7}{#8}{#5}\else
  \ifnum\dispkind=\@ne\DFRAME{#2}{#3}{#7}{#8}{#5}\else
   \ifnum\dispkind=\tw@
    \edef\@tempa{\noexpand\FFRAME{\LaTeXparams}}%
    \@tempa{#2}{#3}{#5}{#6}{#7}{#8}%
    \fi
   \fi
  \fi
  \ifwasdraft\@msidraft=1\else\@msidraft=0\fi{}%
  \egroup
 }%
\def\TEXUX#1{"texux"}
\long\def\QQQ#1#2{%
     \long\expandafter\def\csname#1\endcsname{#2}}%
\long\def\QQA#1#2{}%
\def\QTR#1#2{{\csname#1\endcsname {#2}}}%
\def\EXPAND#1[#2]#3{}%
\def\NOEXPAND#1[#2]#3{}%
\def\LaTeXparent#1{}%
\def\ChildStyles#1{}%
\def\ChildDefaults#1{}%
\def\QTagDef#1#2#3{}%
  \providecommand{\UNICODE}[2][]{\protect\rule{.1in}{.1in}}
  \providecommand{\U}[1]{\protect\rule{.1in}{.1in}}
\def\QQfnmark#1{\footnotemark}
 \def\abstract{%
  \if@twocolumn
   \section*{Abstract (Not appropriate in this style!)}%
   \else \small 
   \begin{center}{\bf Abstract\vspace{-.5em}\vspace{\z@}}\end{center}%
   \quotation 
   \fi
  }%
   \def\registered{\relax\ifmmode{}\r@gistered
                    \else$\m@th\r@gistered$\fi}%
 \def\r@gistered{^{\ooalign
  {\hfil\raise.07ex\hbox{$\scriptstyle\rm\text{R}$}\hfil\crcr
  \mathhexbox20D}}}}{}%
\newdimen\theight
\def\newfmtname{LaTeX2e}
  \DeclareOldFontCommand{\rm}{\normalfont\rmfamily}{\mathrm}
  \DeclareOldFontCommand{\sf}{\normalfont\sffamily}{\mathsf}
  \DeclareOldFontCommand{\tt}{\normalfont\ttfamily}{\mathtt}
  \DeclareOldFontCommand{\bf}{\normalfont\bfseries}{\mathbf}
  \DeclareOldFontCommand{\it}{\normalfont\itshape}{\mathit}
  \DeclareOldFontCommand{\sl}{\normalfont\slshape}{\@nomath\sl}
  \DeclareOldFontCommand{\sc}{\normalfont\scshape}{\@nomath\sc}
\def\alpha{{\Greekmath 010B}}%
\def\beta{{\Greekmath 010C}}%
\def\gamma{{\Greekmath 010D}}%
\def\delta{{\Greekmath 010E}}%
\def\epsilon{{\Greekmath 010F}}%
\def\zeta{{\Greekmath 0110}}%
\def\eta{{\Greekmath 0111}}%
\def\theta{{\Greekmath 0112}}%
\def\iota{{\Greekmath 0113}}%
\def\kappa{{\Greekmath 0114}}%
\def\lambda{{\Greekmath 0115}}%
\def\mu{{\Greekmath 0116}}%
\def\nu{{\Greekmath 0117}}%
\def\xi{{\Greekmath 0118}}%
\def\pi{{\Greekmath 0119}}%
\def\rho{{\Greekmath 011A}}%
\def\sigma{{\Greekmath 011B}}%
\def\tau{{\Greekmath 011C}}%
\def\upsilon{{\Greekmath 011D}}%
\def\phi{{\Greekmath 011E}}%
\def\chi{{\Greekmath 011F}}%
\def\psi{{\Greekmath 0120}}%
\def\omega{{\Greekmath 0121}}%
\def\varepsilon{{\Greekmath 0122}}%
\def\vartheta{{\Greekmath 0123}}%
\def\varpi{{\Greekmath 0124}}%
\def\varrho{{\Greekmath 0125}}%
\def\varsigma{{\Greekmath 0126}}%
\def\varphi{{\Greekmath 0127}}%
\def\nabla{{\Greekmath 0272}}
\def\FindBoldGroup{%
   {\setbox0=\hbox{$\mathbf{x\global\edef\theboldgroup{\the\mathgroup}}$}}%
}
\def\Greekmath#1#2#3#4{%
    \if@compatibility
        \ifnum\mathgroup=\symbold
           \mathchoice{\mbox{\boldmath$\displaystyle\mathchar"#1#2#3#4$}}%
                      {\mbox{\boldmath$\textstyle\mathchar"#1#2#3#4$}}%
                      {\mbox{\boldmath$\scriptstyle\mathchar"#1#2#3#4$}}%
                      {\mbox{\boldmath$\scriptscriptstyle\mathchar"#1#2#3#4$}}%
        \else
           \mathchar"#1#2#3#4%
        \fi 
    \else 
        \FindBoldGroup
        \ifnum\mathgroup=\theboldgroup 
           \mathchoice{\mbox{\boldmath$\displaystyle\mathchar"#1#2#3#4$}}%
                      {\mbox{\boldmath$\textstyle\mathchar"#1#2#3#4$}}%
                      {\mbox{\boldmath$\scriptstyle\mathchar"#1#2#3#4$}}%
                      {\mbox{\boldmath$\scriptscriptstyle\mathchar"#1#2#3#4$}}%
        \else
           \mathchar"#1#2#3#4%
        \fi     	    
	  \fi}
\newif\ifGreekBold  \GreekBoldfalse
\let\SAVEPBF=\pbf
\def\pbf{\GreekBoldtrue\SAVEPBF}%
  \newcounter{equationnumber}  
  \def\mathletters{%
     \addtocounter{equation}{1}
     \edef\@currentlabel{\theequation}%
     \setcounter{equationnumber}{\c@equation}
     \setcounter{equation}{0}%
     \edef\theequation{\@currentlabel\noexpand\alph{equation}}%
  }
    \def\BibTeX{{\rm B\kern-.05em{\sc i\kern-.025em b}\kern-.08em
                 T\kern-.1667em\lower.7ex\hbox{E}\kern-.125emX}}}{}%
\def\AmS{{\protect\usefont{OMS}{cmsy}{m}{n}%
                A\kern-.1667em\lower.5ex\hbox{M}\kern-.125emS}}}{}%
\def\@@eqncr{\let\@tempa\relax
    \ifcase\@eqcnt \def\@tempa{& & &}\or \def\@tempa{& &}%
      \else \def\@tempa{&}\fi
     \@tempa
     \if@eqnsw
        \iftag@
           \@taggnum
        \else
           \@eqnnum\stepcounter{equation}%
        \fi
     \fi
     \global\tag@false
     \global\@eqnswtrue
     \global\@eqcnt\z@\cr}
\def\TCItag{\@ifnextchar*{\@TCItagstar}{\@TCItag}}
\def\@TCItag#1{%
    \global\tag@true
    \global\def\@taggnum{(#1)}}
\def\@TCItagstar*#1{%
    \global\tag@true
    \global\def\@taggnum{#1}}
\def\dsum{\mathop{\displaystyle \sum }}%
\def\dprod{\mathop{\displaystyle \prod }}%
\def\ExitTCILatex{\makeatother }
\if@compatibility\message{amsmath already loaded}\fi\aftergroup\ExitTCILatex}
\if@compatibility\message{amstex already loaded}\fi\aftergroup\ExitTCILatex}
\if@compatibility\message{amsgen already loaded}\fi\aftergroup\ExitTCILatex}
\let\DOTSI\relax
\def\RIfM@{\relax\ifmmode}%
\def\FN@{\futurelet\next}%
\def\iint{\DOTSI\intno@\tw@\FN@\ints@}%
\def\iiint{\DOTSI\intno@\thr@@\FN@\ints@}%
\def\iiiint{\DOTSI\intno@4 \FN@\ints@}%
\def\idotsint{\DOTSI\intno@\z@\FN@\ints@}%
\def\ints@{\findlimits@\ints@@}%
\newif\iflimtoken@
\newif\iflimits@
\def\findlimits@{\limtoken@true\ifx\next\limits\limits@true
 \else\ifx\next\nolimits\limits@false\else
 \limtoken@false\ifx\ilimits@\nolimits\limits@false\else
 \ifinner\limits@false\else\limits@true\fi\fi\fi\fi}%
\def\multint@{\int\ifnum\intno@=\z@\intdots@                          
 \else\intkern@\fi                                                    
 \ifnum\intno@>\tw@\int\intkern@\fi                                   
 \ifnum\intno@>\thr@@\int\intkern@\fi                                 
 \int}
\def\multintlimits@{\intop\ifnum\intno@=\z@\intdots@\else\intkern@\fi
 \ifnum\intno@>\tw@\intop\intkern@\fi
 \ifnum\intno@>\thr@@\intop\intkern@\fi\intop}%
\def\intic@{%
    \mathchoice{\hskip.5em}{\hskip.4em}{\hskip.4em}{\hskip.4em}}%
\def\negintic@{\mathchoice
 {\hskip-.5em}{\hskip-.4em}{\hskip-.4em}{\hskip-.4em}}%
\def\ints@@{\iflimtoken@                                              
 \def\ints@@@{\iflimits@\negintic@
   \mathop{\intic@\multintlimits@}\limits                             
  \else\multint@\nolimits\fi                                          
  \eat@}
 \else                                                                
 \def\ints@@@{\iflimits@\negintic@
  \mathop{\intic@\multintlimits@}\limits\else
  \multint@\nolimits\fi}\fi\ints@@@}%
\def\intkern@{\mathchoice{\!\!\!}{\!\!}{\!\!}{\!\!}}%
\def\plaincdots@{\mathinner{\cdotp\cdotp\cdotp}}%
\def\intdots@{\mathchoice{\plaincdots@}%
 {{\cdotp}\mkern1.5mu{\cdotp}\mkern1.5mu{\cdotp}}%
 {{\cdotp}\mkern1mu{\cdotp}\mkern1mu{\cdotp}}%
 {{\cdotp}\mkern1mu{\cdotp}\mkern1mu{\cdotp}}}%
\def\RIfM@{\relax\protect\ifmmode}
\def\text{\RIfM@\expandafter\text@\else\expandafter\mbox\fi}
\let\nfss@text\text
\def\text@#1{\mathchoice
   {\textdef@\displaystyle\f@size{#1}}%
   {\textdef@\textstyle\tf@size{\firstchoice@false #1}}%
   {\textdef@\textstyle\sf@size{\firstchoice@false #1}}%
   {\textdef@\textstyle \ssf@size{\firstchoice@false #1}}%
   \glb@settings}
\def\textdef@#1#2#3{\hbox{{%
                    \everymath{#1}%
                    \let\f@size#2\selectfont
                    #3}}}
\newif\iffirstchoice@
\def\Let@{\relax\iffalse{\fi\let\\=\cr\iffalse}\fi}%
\def\vspace@{\def\vspace##1{\crcr\noalign{\vskip##1\relax}}}%
\def\multilimits@{\bgroup\vspace@\Let@
 \baselineskip\fontdimen10 \scriptfont\tw@
 \advance\baselineskip\fontdimen12 \scriptfont\tw@
 \lineskip\thr@@\fontdimen8 \scriptfont\thr@@
 \lineskiplimit\lineskip
 \vbox\bgroup\ialign\bgroup\hfil$\m@th\scriptstyle{##}$\hfil\crcr}%
\def\Sb{_\multilimits@}%
\def\endSb{\crcr\egroup\egroup\egroup}%
\def\Sp{^\multilimits@}%
\newdimen\ex@
\def\rightarrowfill@#1{$#1\m@th\mathord-\mkern-6mu\cleaders
 \hbox{$#1\mkern-2mu\mathord-\mkern-2mu$}\hfill
 \mkern-6mu\mathord\rightarrow$}%
\def\leftarrowfill@#1{$#1\m@th\mathord\leftarrow\mkern-6mu\cleaders
 \hbox{$#1\mkern-2mu\mathord-\mkern-2mu$}\hfill\mkern-6mu\mathord-$}%
\def\leftrightarrowfill@#1{$#1\m@th\mathord\leftarrow
\mkern-6mu\cleaders
 \hbox{$#1\mkern-2mu\mathord-\mkern-2mu$}\hfill
 \mkern-6mu\mathord\rightarrow$}%
\def\overrightarrow{\mathpalette\overrightarrow@}%
\def\overrightarrow@#1#2{\vbox{\ialign{##\crcr\rightarrowfill@#1\crcr
 \noalign{\kern-\ex@\nointerlineskip}$\m@th\hfil#1#2\hfil$\crcr}}}%
\def\overleftarrow{\mathpalette\overleftarrow@}%
\def\overleftarrow@#1#2{\vbox{\ialign{##\crcr\leftarrowfill@#1\crcr
 \noalign{\kern-\ex@\nointerlineskip}$\m@th\hfil#1#2\hfil$\crcr}}}%
\def\overleftrightarrow{\mathpalette\overleftrightarrow@}%
\def\overleftrightarrow@#1#2{\vbox{\ialign{##\crcr
   \leftrightarrowfill@#1\crcr
 \noalign{\kern-\ex@\nointerlineskip}$\m@th\hfil#1#2\hfil$\crcr}}}%
\def\underrightarrow{\mathpalette\underrightarrow@}%
\def\underrightarrow@#1#2{\vtop{\ialign{##\crcr$\m@th\hfil#1#2\hfil
  $\crcr\noalign{\nointerlineskip}\rightarrowfill@#1\crcr}}}%
\def\underleftarrow{\mathpalette\underleftarrow@}%
\def\underleftarrow@#1#2{\vtop{\ialign{##\crcr$\m@th\hfil#1#2\hfil
  $\crcr\noalign{\nointerlineskip}\leftarrowfill@#1\crcr}}}%
\def\underleftrightarrow{\mathpalette\underleftrightarrow@}%
\def\underleftrightarrow@#1#2{\vtop{\ialign{##\crcr$\m@th
  \hfil#1#2\hfil$\crcr
 \noalign{\nointerlineskip}\leftrightarrowfill@#1\crcr}}}%
\def\qopnamewl@#1{\mathop{\operator@font#1}\nlimits@}
\let\nlimits@\displaylimits
\def\setboxz@h{\setbox\z@\hbox}
\def\varlim@#1#2{\mathop{\vtop{\ialign{##\crcr
 \hfil$#1\m@th\operator@font lim$\hfil\crcr
 \noalign{\nointerlineskip}#2#1\crcr
 \noalign{\nointerlineskip\kern-\ex@}\crcr}}}}
 \def\rightarrowfill@#1{\m@th\setboxz@h{$#1-$}\ht\z@\z@
  $#1\copy\z@\mkern-6mu\cleaders
  \hbox{$#1\mkern-2mu\box\z@\mkern-2mu$}\hfill
  \mkern-6mu\mathord\rightarrow$}
\def\leftarrowfill@#1{\m@th\setboxz@h{$#1-$}\ht\z@\z@
  $#1\mathord\leftarrow\mkern-6mu\cleaders
  \hbox{$#1\mkern-2mu\copy\z@\mkern-2mu$}\hfill
  \mkern-6mu\box\z@$}
\def\projlim{\qopnamewl@{proj\,lim}}
\def\injlim{\qopnamewl@{inj\,lim}}
\def\varinjlim{\mathpalette\varlim@\rightarrowfill@}
\def\varprojlim{\mathpalette\varlim@\leftarrowfill@}
\def\varliminf{\mathpalette\varliminf@{}}
\def\varliminf@#1{\mathop{\underline{\vrule\@depth.2\ex@\@width\z@
   \hbox{$#1\m@th\operator@font lim$}}}}
\def\varlimsup{\mathpalette\varlimsup@{}}
\def\varlimsup@#1{\mathop{\overline
  {\hbox{$#1\m@th\operator@font lim$}}}}
\def\align{\@verbatim \frenchspacing\@vobeyspaces \@alignverbatim
You are using the "align" environment in a style in which it is not defined.}
\let\csname endalign*\endcsname =\endtrivlist
\def\alignat{\@verbatim \frenchspacing\@vobeyspaces \@alignatverbatim
You are using the "alignat" environment in a style in which it is not defined.}
\let\csname endalignat*\endcsname =\endtrivlist
\def\xalignat{\@verbatim \frenchspacing\@vobeyspaces \@xalignatverbatim
You are using the "xalignat" environment in a style in which it is not defined.}
\let\csname endxalignat*\endcsname =\endtrivlist
\def\gather{\@verbatim \frenchspacing\@vobeyspaces \@gatherverbatim
You are using the "gather" environment in a style in which it is not defined.}
\let\csname endgather*\endcsname =\endtrivlist
\def\multiline{\@verbatim \frenchspacing\@vobeyspaces \@multilineverbatim
You are using the "multiline" environment in a style in which it is not defined.}
\let\csname endmultiline*\endcsname =\endtrivlist
\def\arrax{\@verbatim \frenchspacing\@vobeyspaces \@arraxverbatim
You are using a type of "array" construct that is only allowed in AmS-LaTeX.}
\def\tabulax{\@verbatim \frenchspacing\@vobeyspaces \@tabulaxverbatim
You are using a type of "tabular" construct that is only allowed in AmS-LaTeX.}
\let\csname endarrax*\endcsname =\endtrivlist
\let\csname endtabulax*\endcsname =\endtrivlist
 \def\endequation{%
     \ifmmode\ifinner 
      \iftag@
        \addtocounter{equation}{-1} 
        $\hfil
           \displaywidth\linewidth\@taggnum\egroup \endtrivlist
        \global\tag@false
        \global\@ignoretrue   
      \else
        $\hfil
           \displaywidth\linewidth\@eqnnum\egroup \endtrivlist
        \global\tag@false
        \global\@ignoretrue 
      \fi
     \else   
      \iftag@
        \addtocounter{equation}{-1} 
        \eqno \hbox{\@taggnum}
        \global\tag@false%
        $$\global\@ignoretrue
      \else
        \eqno \hbox{\@eqnnum}
        $$\global\@ignoretrue
      \fi
     \fi\fi
 } 
 \newif\iftag@ \tag@false
 \def\TCItag{\@ifnextchar*{\@TCItagstar}{\@TCItag}}
 \def\@TCItag#1{%
     \global\tag@true
     \global\def\@taggnum{(#1)}}
 \def\@TCItagstar*#1{%
     \global\tag@true
     \global\def\@taggnum{#1}}
     \def\tag{\@ifnextchar*{\@tagstar}{\@tag}}
     \def\@tag#1{%
         \global\tag@true
         \global\def\@taggnum{(#1)}}
     \def\@tagstar*#1{%
         \global\tag@true
         \global\def\@taggnum{#1}}
\def\dfrac#1#2{{\displaystyle {#1 \over #2}}}%
\def\binom#1#2{{#1 \choose #2}}%
\def\dbinom#1#2{{\displaystyle {#1 \choose #2}}}%
\begin{document}

\title{On bivariate $s$-Fibopolynomials}
\author{Claudio de Jes\'{u}s Pita Ruiz Velasco \\
Universidad Panamericana\\
Mexico City, Mexico\\
email: cpita@up.edu.mx}
\date{}
\maketitle

\begin{abstract}
In this article we study the mathematical objects $\binom{n}{p}_{F_{s}\left(
x,y\right) }=\frac{F_{sn}\left( x,y\right) F_{s\left( n-1\right) }\left(
x,y\right) \cdots F_{s\left( n-p+1\right) }\left( x,y\right) }{F_{s}\left(
x,y\right) F_{2s}\left( x,y\right) \cdots F_{ps}\left( x,y\right) }$, where $%
s\in \mathbb{N}$ and $F_{sn}\left( x,y\right) $ is the bivariate $s$%
-Fibonacci polynomial sequence. We call these objects \textquotedblleft
bivariate $s$\textit{-}Fibopolynomials\textquotedblright . It turns out that
they are in fact polynomials, and when $x=y=1$ they become the known $s$%
-Fibonomials, studied in a previous work. We obtain the $Z$ transform of
sequences of the form $\prod_{i=1}^{l}F_{st_{i}n+m_{i}}^{k_{i}}\left(
x,y\right) $, and from this result we obtain the $Z$ transform of the
sequence of bivariate $s$-Fibopolynomials. Then we establish connections
between these two kind of sequences. We also obtain expressions for the
partial derivatives of $\binom{n}{p}_{F_{s}\left( x,y\right) }$.
\end{abstract}

\section{\label{Sec1}Introduction}

We use $\mathbb{N}$ for the natural numbers and $\mathbb{N}^{\prime }$ for $%
\mathbb{N\cup }\left\{ 0\right\} $. Throughout the work $s$ will denote a
given natural number.

We use the standard notation $F_{n}\left( x,y\right) $ and $L_{n}\left(
x,y\right) $ for the sequences of bivariate Fibonacci polynomials and
bivariate Lucas polynomials, defined by the recurrences $F_{n+2}\left(
x,y\right) =xF_{n-1}\left( x,y\right) +yF_{n}\left( x,y\right) $, $%
F_{0}\left( x,y\right) =0$, $F_{1}\left( x,y\right) =1$, and $L_{n+2}\left(
x,y\right) =xL_{n-1}\left( x,y\right) +yL_{n}\left( x,y\right) $, $%
L_{0}\left( x,y\right) =2$, $L_{1}\left( x,y\right) =x$, respectively, and
extended to negative integers as $F_{-n}\left( x,y\right) =-\left( -y\right)
^{-n}F_{n}\left( x,y\right) $ and $L_{-n}\!\left( x,y\right) =\left(
-y\right) ^{-n}L_{n}\!\left( x,y\right) $, $n\in \mathbb{N}$. Clearly $%
F_{n}\left( x,y\right) $ and $L_{n}\left( x,y\right) $ are monic
polynomials. The degree of $F_{n}\left( x,y\right) $ is $n-1$ in $x$ and $%
\lfloor \frac{n-1}{2}\rfloor $ in $y$, and the degree of $L_{n}\left(
x,y\right) $ is $n$ in $x$ and $\lfloor \frac{n}{2}\rfloor $ in $y$. Observe
that (for $n\in \mathbb{N}$), $F_{-n}\left( x,y\right) $ and $L_{-n}\left(
x,y\right) $ are polynomials in $x$ with negative powers of $y$ (times some
constants) as coefficients. It is clear that the case $y=1$ corresponds to
the Fibonacci and Lucas polynomials $F_{n}\left( x\right) $ and $L_{n}\left(
x\right) $ (see A011973 and A034807 of Sloane's \textit{Encyclopedia}), and
the case $x=y=1$ corresponds to the Fibonacci and Lucas number sequences $%
F_{n}$ and $L_{n}$ (A000045 and A000032 of Sloane's \textit{Encyclopedia},
respectively). Some positive indexed bivariate Fibonacci polynomials are $%
F_{2}\left( x,y\right) =x$, $F_{3}\left( x,y\right) =x^{2}+y$, $F_{4}\left(
x,y\right) =x^{3}+2xy$, $F_{5}\left( x,y\right) =x^{4}+3x^{2}y+y^{2}$, and
so on, and some positive indexed bivariate Lucas polynomials are $%
L_{2}\left( x,y\right) =x^{2}+2y$, $L_{3}\left( x,y\right) =x^{3}+3xy$, $%
L_{4}\left( x,y\right) =x^{4}+4x^{2}y+2y^{2}$, $L_{5}\left( x,y\right)
=x^{5}+5x^{3}y+5xy^{2}$, and so on. Some negative indexed bivariate
Fibonacci and Lucas polynomials are $F_{-1}\left( x,y\right) =y^{-1}$, $%
L_{-1}\left( x,y\right) =-xy^{-1}$, $F_{-2}\left( x,y\right) =-xy^{-2}$, $%
L_{-2}\left( x,y\right) =\left( x^{2}+2y\right) y^{-2}$, $F_{-3}\left(
x,y\right) =\left( x^{2}+y\right) y^{-3}$, $L_{-4}\left( x,y\right) =\left(
x^{4}+4x^{2}y+2y^{2}\right) y^{-4}$, and so on. A \textit{bivariate
generalized Fibonacci polynomial} (or \textit{bivariate Gibonacci polynomial}%
), denoted by $G_{n}\left( x,y\right) $, is defined by the recurrence $%
G_{n}\left( x,y\right) =xG_{n-1}\left( x,y\right) +yG_{n-2}\left( x,y\right) 
$, $n\geq 2$, where $G_{0}\left( x,y\right) $ and $G_{1}\left( x,y\right) $
are given (arbitrary) initial conditions. It is easy to see that 
\begin{equation}
G_{n}\left( x,y\right) =yG_{0}\left( x,y\right) F_{n-1}\left( x,y\right)
+G_{1}\left( x,y\right) F_{n}\left( x,y\right) .  \label{1.1}
\end{equation}

We will be using extensively Binet's formulas (without further comments):%
\begin{equation}
F_{n}\left( x,y\right) =\frac{1}{\sqrt{x^{2}+4}}\left( \alpha ^{n}\left(
x,y\right) -\beta ^{n}\left( x,y\right) \right) \ \ \ \text{and}\ \ \
L_{n}\left( x,y\right) =\alpha ^{n}\left( x,y\right) +\beta ^{n}\left(
x,y\right) ,  \label{1.2}
\end{equation}%
where 
\begin{equation}
\alpha \left( x,y\right) =\frac{1}{2}\left( x+\sqrt{x^{2}+4y}\right) \text{
\ \ \ and \ \ \ }\beta \left( x,y\right) =\frac{1}{2}\left( x-\sqrt{x^{2}+4y}%
\right) \text{,}  \label{1.3}
\end{equation}

We will use also some relations involving $\alpha \!\left( x,y\right) $ and $%
\beta \!\left( x,y\right) $, as $\alpha \!\left( x,y\right) +\beta \!\left(
x,y\right) =x$ and $\alpha \!\left( x,y\right) \beta \!\left( x,y\right) =-y$%
, with no additional comments. The basics of the Fibonacci world is
contained in the famous references \cite{K} and \cite{V}. What we will use
about bivariate Fibonacci and Lucas polynomials is contained in \cite{Ca}, 
\cite{Sw} and \cite{Yu}.

There are certainly lots of identities involving Fibonacci and Lucas
numbers, and the list continues increasing trough the years. But the list is
not so large when bivariate Fibonacci and Lucas polynomials are involved. We
will need some of these identities, to be used in the proofs of the results
presented in this work, and in some of the given examples as well. We give
now a short list.

For $p\in \mathbb{N}$ we have%
\begin{equation}
\frac{F_{\left( 2p-1\right) s}\left( x,y\right) }{F_{s}\left( x,y\right) }%
=\sum_{k=0}^{p-1}\left( -y\right) ^{sk}L_{2\left( p-k-1\right) s}\left(
x,y\right) -\left( -y\right) ^{s\left( p-1\right) }.  \label{1.6}
\end{equation}%
\begin{equation}
\frac{F_{2ps}\left( x,y\right) }{F_{s}\left( x,y\right) }=\sum_{k=0}^{p-1}%
\left( -y\right) ^{sk}L_{\left( 2p-2k-1\right) s}\left( x,y\right) .
\label{1.7}
\end{equation}

(The proofs of (\ref{1.6}) and (\ref{1.7}) are easy exercises by using
Binet's formulas.) Moreover, we have also that%
\begin{equation}
\frac{F_{ps}\left( x,y\right) }{F_{s}\left( x,y\right) }=F_{p}\left(
L_{s}\left( x,y\right) ,-\left( -y\right) ^{s}\right) .  \label{1.71}
\end{equation}

(See \cite{Ca}.) We comment in passing that formulas (\ref{1.6}) and (\ref%
{1.7}) (or (\ref{1.71})) shows the well-known fact that $F_{ps}\left(
x,y\right) $ is divisible $F_{s}\left( x,y\right) $ (see \cite{Hog2},
Theorem 6). Some examples are the following%
\begin{equation*}
\frac{F_{2s}\left( x,y\right) }{F_{s}\left( x,y\right) }=L_{s}\left(
x,y\right) .
\end{equation*}%
\begin{equation*}
\frac{F_{3s}\left( x,y\right) }{F_{s}\left( x,y\right) }=L_{2s}\left(
x,y\right) +\left( -y\right) ^{s}=L_{s}^{2}\left( x,y\right) -\left(
-y\right) ^{s}.
\end{equation*}%
\begin{equation*}
\frac{F_{4s}\left( x,y\right) }{F_{s}\left( x,y\right) }=L_{3s}\left(
x,y\right) +\left( -y\right) ^{s}L_{s}\left( x,y\right) =L_{s}\left(
x,y\right) \left( L_{s}^{2}\left( x,y\right) -2\left( -y\right) ^{s}\right) .
\end{equation*}%
\begin{equation*}
\frac{F_{5s}\left( x,y\right) }{F_{s}\left( x,y\right) }=L_{4s}\left(
x,y\right) +\left( -y\right) ^{s}L_{2s}\left( x,y\right)
+y^{2s}=L_{s}^{4}\left( x,y\right) -3\left( -y\right) ^{s}L_{s}^{2}\left(
x,y\right) +y^{2s}.
\end{equation*}

We have also the identities%
\begin{equation}
F_{s\left( n+1\right) }\left( x,y\right) -\left( -y\right) ^{s}F_{s\left(
n-1\right) }\left( x,y\right) =F_{s}\left( x,y\right) L_{sn}\left(
x,y\right) .  \label{1.8}
\end{equation}%
\begin{equation}
F_{s\left( n+1\right) }\left( x,y\right) +\left( -y\right) ^{s}F_{s\left(
n-1\right) }\left( x,y\right) =L_{s}\left( x,y\right) F_{sn}\left(
x,y\right) .  \label{1.81}
\end{equation}

(The version $x=y=s=1$ of (\ref{1.8}), that is $F_{n+1}+F_{n-1}=L_{n}$, is a
famous identity. The same version for (\ref{1.81}) gives us simply the
Fibonacci recurrence.)

For $a,b,c,d,r\in \mathbb{Z}$ such that $a+b=c+d$, we have the so-called
\textquotedblleft index-reduction formulas\textquotedblright :%
\begin{equation}
F_{a}\left( x,y\right) F_{b}\left( x,y\right) -F_{c}\left( x,y\right)
F_{d}\left( x,y\right) =\left( -y\right) ^{r}\left( F_{a-r}\left( x,y\right)
F_{b-r}\left( x,y\right) -F_{c-r}\left( x,y\right) F_{d-r}\left( x,y\right)
\right) .  \label{1.9}
\end{equation}%
\begin{equation}
L_{a}\left( x,y\right) F_{b}\left( x,y\right) -L_{c}\left( x,y\right)
F_{d}\left( x,y\right) =\left( -y\right) ^{r}\left( L_{a-r}\left( x,y\right)
F_{b-r}\left( x,y\right) -L_{c-r}\left( x,y\right) F_{d-r}\left( x,y\right)
\right) .  \label{1.10}
\end{equation}

(See \cite{Jh}, where the case $x=y=1$ is discussed.) Two versions of (\ref%
{1.9}) and (\ref{1.10}), which will be used several times in this work, are
obtained by setting $a=M$, $b=N$, $c=M+K$, $d=r=N-K$, with $M,N,K\in \mathbb{%
Z}$. What we get is%
\begin{equation}
F_{M}\left( x,y\right) F_{N}\left( x,y\right) -F_{M+K}\left( x,y\right)
F_{N-K}\left( x,y\right) =\left( -y\right) ^{N-K}F_{M+K-N}\left( x,y\right)
F_{K}\left( x,y\right) ,  \label{1.11}
\end{equation}%
and%
\begin{equation}
L_{M}\left( x,y\right) F_{N}\left( x,y\right) -L_{M+K}\left( x,y\right)
F_{N-K}\left( x,y\right) =\left( -y\right) ^{N-K}L_{M+K-N}\left( x,y\right)
F_{K}\left( x,y\right) ,  \label{1.12}
\end{equation}%
respectively. (We give below a proof of these identities. See (\ref{2.12})
and (\ref{2.14}).) In fact, what we will be using are identities which in
turn are versions of (\ref{1.11}) and (\ref{1.12}), obtained from them with
some identification of the indices $M$, $N$, $K$ with other indices.

For a given bivariate Gibonacci polynomial sequence $G_{n}\left( x,y\right)
=\left( G_{0}\left( x,y\right) ,G_{1}\left( x,y\right) ,G_{2}\left(
x,y\right) ,\ldots \right) $, the \textit{bivariate }$s$-\textit{Gibonacci
polynomial sequence }$G_{sn}\left( x,y\right) $ is $G_{sn}\left( x,y\right)
=\left( G_{0}\left( x,y\right) ,G_{s}\left( x,y\right) ,G_{2s}\left(
x,y\right) ,\ldots \right) $, and the\textit{\ bivariate }$s$\textit{%
-Gibonacci polynomial factorial} of $G_{sn}\left( x,y\right) $, denoted by $%
\left( G_{n}\left( x,y\right) !\right) _{s}$, is $\left( G_{n}\left(
x,y\right) !\right) _{s}=G_{sn}\left( x,y\right) G_{s\left( n-1\right)
}\left( x,y\right) \cdots G_{s}\left( x,y\right) $. Given $n\in \mathbb{N}%
^{\prime }$ and $k\in \left\{ 0,1,\ldots ,n\right\} $, the \textit{bivariate 
}$s$-\textit{Gibopolynomial }(or bivariate $s$-Gibopolynomial coefficient),
denoted by $\binom{n}{k}_{G_{s}\left( x,y\right) }$, is defined by $\binom{n%
}{0}_{G_{s}\left( x,y\right) }=\binom{n}{n}_{G_{s}\left( x,y\right) }=1$, and%
\begin{equation}
\binom{n}{k}_{G_{s}\left( x,y\right) }=\frac{\left( G_{n}\left( x,y\right)
!\right) _{s}}{\left( G_{k}\left( x,y\right) !\right) _{s}\left(
G_{n-k}\left( x,y\right) !\right) _{s}},\text{ \ \ }k=1,2,\ldots ,n-1.
\label{1.13}
\end{equation}

That is, for $k\in \left\{ 1,2,\ldots ,n-1\right\} $ we have that%
\begin{equation}
\binom{n}{k}_{G_{s}\left( x,y\right) }=\frac{G_{sn}\left( x,y\right)
G_{s\left( n-1\right) }\left( x,y\right) \cdots G_{s\left( n-k+1\right)
}\left( x,y\right) }{G_{s}\left( x,y\right) G_{2s}\left( x,y\right) \cdots
G_{ks}\left( x,y\right) }.  \label{1.14}
\end{equation}

Plainly we have symmetry for bivariate $s$-Gibopolynomials%
\begin{equation*}
\binom{n}{k}_{G_{s}\left( x,y\right) }=\binom{n}{n-k}_{G_{s}\left(
x,y\right) }.
\end{equation*}

In the case of bivariate $s$-Fibopolynomials, we can use the identity%
\begin{equation*}
F_{s\left( n-k\right) +1}\left( x,y\right) F_{sk}\left( x,y\right)
+yF_{sk-1}\left( x,y\right) F_{s\left( n-k\right) }\left( x,y\right)
=F_{sn}\left( x,y\right) ,
\end{equation*}%
(which comes from (\ref{1.11}) with $M=sn$, $N=1$ and $K=-sk+1$), to
conclude that%
\begin{equation}
\binom{n}{k}_{F\!_{s}\!\left( x,y\right) }=F_{s\left( n-k\right) +1}\left(
x,y\right) \binom{n-1}{k-1}_{F\!_{s}\!\left( x,y\right) }+yF_{sk-1}\left(
x,y\right) \binom{n-1}{k}_{F\!_{s}\!\left( x,y\right) }.  \label{2.121}
\end{equation}

Formula (\ref{2.121}) shows (with a simple induction argument) that
bivariate $s$-Fibopolynomials are in fact polynomials in $x$ and $y$.
Moreover, $\binom{n}{k}_{F\!_{s}\!\left( x,y\right) }$ is a polynomial of
degree $sk\left( n-k\right) $ in $x$, and of degree $\lfloor \frac{sk\left(
n-k\right) }{2}\rfloor $ in $y$. The case $s=x=y=1$ corresponds to
Fibonomial coefficients $\binom{n}{k}_{F\!\!}$ \negthinspace , introduced by
V. E. Hoggatt, Jr. \cite{Hog} in 1967, and the case $x=y=1$ corresponds to $%
s $-Fibonomials $\binom{n}{k}_{F_{s}\left( 1,1\right) }$, first mentioned
also in \cite{Hog}, and studied recently in \cite{Pi2}. (For $s=1,2,3,$ the $%
s$-Fibonomial sequences are A010048, A034801 and A034802 of Sloane's \textit{%
Encyclopedia}, respectively.) However, bivariate $s$-Gibopolynomials are in
general rational functions in $x$ and $y$. For example, the bivariate $2$%
-Lucapolynomial $\binom{4}{2}_{L_{2}\left( x,y\right) }$ is%
\begin{eqnarray*}
\binom{4}{2}_{L_{2}\left( x,y\right) } &=&\frac{L_{8}\left( x,y\right)
L_{6}\left( x,y\right) }{L_{2}\left( x,y\right) L_{4}\left( x,y\right) } \\
&=&\frac{\left( x^{4}+4x^{2}y+y^{2}\right) \left(
x^{8}+8x^{6}y+20x^{4}y^{2}+16x^{2}y^{3}+2y^{4}\right) }{x^{4}+4x^{2}y+2y^{2}}%
.
\end{eqnarray*}

Observe that identities (\ref{1.6}), (\ref{1.7}) and (\ref{1.71}) refer to
bivariate $s$-Fibopolynomials $\binom{n}{1}_{F\!_{s}\!\left( x,y\right) }$,
with $n=2p+1$, $n=2p$ and $n=p$, respectively. We present now some examples
of bivariate $s$-Fibopolynomals $\binom{n}{k}_{F\!_{s}\!\left( x,y\right) }$%
, as triangular arrays, where $n$ stands for lines and $k=0,1,\ldots ,n$
stands for columns.

For $s=1$ we have%
\begin{equation*}
\begin{array}{ccccccccccccc}
&  &  &  &  &  & ^{1} &  &  &  &  &  &  \\ 
&  &  &  &  & ^{1} &  & ^{1} &  &  &  &  &  \\ 
&  &  &  & ^{1} &  & ^{x} &  & ^{1} &  &  &  &  \\ 
&  &  & ^{1} &  & ^{x^{2}+y} &  & ^{x^{2}+y} &  & ^{1} &  &  &  \\ 
&  & ^{1} &  & ^{x^{3}+2xy} &  & ^{x^{4}+3x^{2}y+2y^{2}} &  & ^{x^{3}+2xy} & 
& ^{1} &  &  \\ 
& ^{1}%
\begin{array}{cc}
& 
\end{array}
&  & ^{x^{4}+3x^{2}y+y^{2}} &  & ^{\substack{ x^{6}+5x^{4}y  \\ %
+7x^{2}y^{2}+2y^{3}}} &  & ^{\substack{ x^{6}+5x^{4}y  \\ %
+7x^{2}y^{2}+2y^{3} }} &  & ^{x^{4}+3x^{2}y+y^{2}} &  & 
\begin{array}{cc}
& 
\end{array}%
^{1} &  \\ 
&  & \vdots &  & \vdots &  & \vdots &  & \vdots &  & \vdots &  & 
\end{array}%
\end{equation*}

For $s=2$ we have%
\begin{equation*}
\begin{array}{ccccccccccc}
&  &  &  &  & ^{1} &  &  &  &  &  \\ 
&  &  &  & ^{1} &  & ^{1} &  &  &  &  \\ 
&  &  & ^{1} &  & ^{x^{2}+2y} &  & ^{1} &  &  &  \\ 
&  & ^{1} &  & ^{\substack{ x^{4}+4x^{2}y  \\ +3y^{2}}} &  & ^{\substack{ %
x^{4}+4x^{2}y  \\ +3y^{2}}} &  & ^{1} &  &  \\ 
& ^{1} &  & _{\substack{ +10x^{2}y^{2}  \\ +4y^{3}}}^{x^{6}+6x^{4}y} &  & 
_{\substack{ +21x^{4}y^{2}  \\ +20x^{2}y^{3}  \\ +6y^{4}}}^{x^{8}+8x^{6}y} & 
& _{\substack{ +10x^{2}y^{2}  \\ +4y^{3}}}^{x^{6}+6x^{4}y} &  & ^{1} &  \\ 
^{1%
\begin{array}{cc}
& 
\end{array}%
} &  & _{\substack{ +21x^{4}y^{2}  \\ +20x^{2}y^{3}  \\ +5y^{4}}}%
^{x^{8}+8x^{6}y} &  & _{\substack{ +55x^{8}y^{2}+120x^{6}y^{3}  \\ %
+127x^{4}y^{4}+60x^{2}y^{5}  \\ +10y^{6}}}^{x^{12}+12x^{10}y} &  & 
_{\substack{ +55x^{8}y^{2}+120x^{6}y^{3}  \\ +127x^{4}y^{4}+60x^{2}y^{5}  \\ %
+10y^{6}}}^{x^{12}+12x^{10}y} &  & _{\substack{ +21x^{4}y^{2}  \\ %
+20x^{2}y^{3}  \\ +5y^{4}}}^{x^{8}+8x^{6}y} &  & 
\begin{array}{cc}
& 
\end{array}%
^{1} \\ 
& \vdots &  & \vdots &  & \vdots &  & \vdots &  & \vdots & 
\end{array}%
\end{equation*}

In this article we work with the $Z$ transform of complex sequences $a_{n}$.
Some definitions and basic facts about this tool, and some preliminary
results as well, are presented in section 2. Naively we can think of the $Z$
transform of a sequence $a_{n}=\left( a_{0},a_{1},\ldots \right) $ as if
this were the complex function $F\left( z\right) $ which comes from the
generating function $G\left( x\right) $ of $a_{n}$, with $x$ replaced by $%
z^{-1}$. For example, it is well-known that the generating function of the
Fibonacci sequence $F_{n}$ is given by $G\left( x\right) =x\left(
1-x-x^{2}\right) ^{-1}$. It turns out that the $Z$ transform of $F_{n}$ (as
we will see in section \ref{Sec2}) is the complex function $F\left( z\right)
=z\left( z^{2}-z-1\right) ^{-1}=G\left( z^{-1}\right) $.

The problem of finding the generating function of the $k$-th power of a
`Fibonacci-type' sequence, was first considered by Riordan \cite{R}, Carlitz 
\cite{C}, and Horadam \cite{Hor}. However, in their works there are no
explicit reference to the standard Fibonacci sequence $F_{n}$ case. Later,
Shannon \cite{Sh} obtains an explicit generating function for $F_{n}^{k}$.
Now we know that the corresponding $Z$ transform of the sequence $F_{n}^{k}$
is given by%
\begin{equation}
\mathcal{Z}\left( F_{n}^{k}\right) =z\frac{\sum\limits_{i=0}^{k}\!\sum%
\limits_{j=0}^{i}\left( -1\right) ^{\frac{j\left( j+1\right) }{2}}\!\dbinom{%
k+1}{j}_{F\!}\!F_{i-j}^{k}\!z^{k-i}}{\sum\limits_{i=0}^{k+1}\left( -1\right)
^{\frac{i\left( i+1\right) }{2}}\dbinom{k+1}{i}_{F\!}\!z^{k+1-i}}.
\label{1.15}
\end{equation}

In \cite{Pi1} we proved the following more general result on the $Z$
transform of the sequence $F_{n+m_{1}}^{k_{1}}\cdots F_{n+m_{l}}^{k_{l}}$,
where $k_{1},k_{2},\ldots ,k_{l}\in \mathbb{N}^{\prime }$ and $%
m_{1},m_{2},\ldots ,m_{l}\in \mathbb{Z}$ are given,%
\begin{equation}
\mathcal{Z}\left( F_{n+m_{1}}^{k_{1}}\cdots F_{n+m_{l}}^{k_{l}}\right) =z%
\frac{\sum\limits_{i=0}^{k_{1}+\cdots +k_{l}}\!\sum\limits_{j=0}^{i}\left(
-1\right) ^{\frac{j\left( j+1\right) }{2}}\!\dbinom{k_{1}+\cdots +k_{l}+1}{j}%
_{F\!\!}\!F_{m_{1}+i-j}^{k_{1}}\!\cdots
\!F_{m_{l}+i-j}^{k_{l}}z^{k_{1}+\cdots +k_{l}-i}}{\sum\limits_{i=0}^{k_{1}+%
\cdots +k_{l}+1}\left( -1\right) ^{\frac{i\left( i+1\right) }{2}}\dbinom{%
k_{1}+\cdots +k_{l}+1}{i}_{F\!}\!z^{k_{1}+\cdots +k_{l}+1-i}}.  \label{1.16}
\end{equation}

From (\ref{1.16}) we obtained as corollary that the $Z$ transform of the
Fibonomial sequence $\binom{n}{p}_{F\!\!}\!$ \ is as follows%
\begin{equation}
\mathcal{Z}\left( \binom{n}{p}_{\!F}\right) =\frac{z}{\sum_{i=0}^{p+1}\left(
-1\right) ^{\frac{i\left( i+1\right) }{2}}\dbinom{p+1}{i}_{F\!}\!z^{p+1-i}}.
\label{1.17}
\end{equation}

(This result was demonstrated earlier by I. Strazdins \cite{St}, working
with a different approach.) With (\ref{1.16}) and (\ref{1.17}), was a
natural task to establish connections between products of powers of
Fibonacci sequences $F_{n+m_{1}}^{k_{1}}\cdots F_{n+m_{l}}^{k_{l}}$ and
Fibonomial sequences $\binom{n}{p}_{F\!\!}\!$ \ \negthinspace . For example,
it is possible to see (from (\ref{1.16}) and (\ref{1.17})) that the sequence 
$F_{n}^{4}=\left( 0,1,1,16,81,625,\ldots \right) $ can be written as a
linear combination of the Fibonomial sequence $\binom{n}{4}_{F\!\!}\!=\left(
0,0,0,0,1,5,40,260,\ldots \right) $ and its shifted sequences $\binom{n+i}{4}%
_{F\!\!}\!$ , $i=1,2,3$, as%
\begin{equation}
F_{n}^{4}=\dbinom{n+3}{4}_{F\!\!}\!-4\dbinom{n+2}{4}_{F\!\!}\!-4\dbinom{n+1}{%
4}_{F\!\!}\!+\dbinom{n}{4}_{F\!\!}\!.  \label{1.172}
\end{equation}

(This particular identity is known since 1970. See \cite{Phi}.)

In \cite{Pi2} we showed that (\ref{1.17}) is in fact a particular case of
the following result%
\begin{eqnarray}
&&\mathcal{Z}\left( F_{t_{1}sn+m_{1}}^{k_{1}}\cdots
F_{t_{l}sn+m_{l}}^{k_{l}}\right)  \label{1.18} \\
&=&z\frac{\sum\limits_{i=0}^{k_{1}t_{1}+\cdots
+k_{l}t_{l}}\!\sum\limits_{j=0}^{i}\left( -1\right) ^{\frac{\left(
sj+2(s+1)\right) \left( j+1\right) }{2}}\!\dbinom{k_{1}t_{1}+\cdots
+k_{l}t_{l}+1}{j}_{F\!_{s}\!}\!F_{m_{1}+t_{1}s\left( i-j\right)
}^{k_{1}}\!\cdots \!F_{m_{l}+t_{l}s\left( i-j\right)
}^{k_{l}}z^{k_{1}t_{1}+\cdots +k_{l}t_{l}-i}}{\sum\limits_{i=0}^{k_{1}t_{1}+%
\cdots +k_{l}t_{l}+1}\left( -1\right) ^{\frac{\left( si+2(s+1)\right) \left(
i+1\right) }{2}}\dbinom{k_{1}t_{1}+\cdots +k_{l}t_{l}+1}{i}%
_{F\!_{s}\!}\!z^{k_{1}t_{1}+\cdots +k_{l}t_{l}+1-i}}.  \notag
\end{eqnarray}

Now we have new parameters $s\in \mathbb{N}$ and $t_{1},t_{2},\ldots
,t_{l}\in \mathbb{N}^{\prime }$, and (\ref{1.16}) becomes the case $%
s=t_{1}=\cdots =t_{l}=1$ of (\ref{1.18}). Observe that in (\ref{1.18}) are
now involved $s$-Fibonomials $\binom{n}{p}_{F\!\!_{s}}\!$ (in this context
the $1$-Fibonomials are just the Fibonomials). Then we could see that (\ref%
{1.172}) is simply the case $s=1$ of the following identity (formula (58) of 
\cite{Pi2})%
\begin{equation}
F_{sn}^{4}=F_{s}^{4}\left( \dbinom{n+3}{4}_{F\!\!_{s}}\!+\dbinom{n}{4}%
_{F\!\!_{s}}\!+\left( \frac{3\left( -1\right) ^{s}F_{3s}}{F_{s}}+2\right)
\left( \dbinom{n+2}{4}_{F\!\!_{s}}\!+\dbinom{n+1}{4}_{F\!\!_{s}}\!\right)
\right) .  \label{1.181}
\end{equation}

In this article we will show that (\ref{1.18}) is in fact the particular
case $x=y=1$ of (formula (\ref{3.11}) of section \ref{Sec3})%
\begin{eqnarray*}
&&\mathcal{Z}\left( F_{t_{1}sn+m_{1}}^{k_{1}}\left( x,y\right) \cdots
F_{t_{l}sn+m_{l}}^{k_{l}}\left( x,y\right) \right) \\
&& \\
&=&z\frac{%
\begin{array}{c}
\sum\limits_{i=0}^{k_{1}t_{1}+\cdots
+k_{l}t_{l}}\!\sum\limits_{j=0}^{i}\left( -1\right) ^{\frac{\left(
sj+2(s+1)\right) \left( j+1\right) }{2}}\!\dbinom{k_{1}t_{1}+\cdots
+k_{l}t_{l}+1}{j}_{F\!_{s}\!\left( x,y\right) }\! \\ 
\\ 
\times F_{m_{1}+t_{1}s\left( i-j\right) }^{k_{1}}\!\left( x,y\right) \cdots
\!F_{m_{l}+t_{l}s\left( i-j\right) }^{k_{l}}\left( x,y\right) y^{\frac{%
sj\left( j-1\right) }{2}}z^{k_{1}t_{1}+\cdots +k_{l}t_{l}-i}%
\end{array}%
}{\sum\limits_{i=0}^{k_{1}t_{1}+\cdots +k_{l}t_{l}+1}\left( -1\right) ^{%
\frac{\left( si+2(s+1)\right) \left( i+1\right) }{2}}\dbinom{%
k_{1}t_{1}+\cdots +k_{l}t_{l}+1}{i}_{F\!_{s}\!\left( x,y\right) }\!y^{\frac{%
si\left( i-1\right) }{2}}z^{k_{1}t_{1}+\cdots +k_{l}t_{l}+1-i}}.
\end{eqnarray*}

This formula involve now bivariate $s$-Fibopolynomials $\binom{n}{p}%
_{F\!\!_{s}\left( x,y\right) }\!$, which are the main mathematical objects
studied in this work. Now it is possible to see that (\ref{1.181}) is in
fact the particular case $x=y=1$ of the following identity between two
bivariate polynomials 
\begin{equation}
F_{sn}^{4}\left( x,y\right) =F_{s}^{4}\left( x,y\right) \left( 
\begin{array}{c}
\dbinom{n+3}{4}_{F_{s}\left( x,y\right) }+y^{6s}\dbinom{n}{4}_{F_{s}\left(
x,y\right) } \\ 
+\left( \frac{3\left( -y\right) ^{s}F_{3s}\left( x,y\right) }{F_{s}\left(
x,y\right) }+2y^{2s}\right) \left( \dbinom{n+2}{4}_{F_{s}\left( x,y\right)
}+y^{2s}\dbinom{n+1}{4}_{F_{s}\left( x,y\right) }\right)%
\end{array}%
\right) .  \label{1.191}
\end{equation}

(See (\ref{4.52}) and examples (\ref{4.1126}) to (\ref{4.119}) in section %
\ref{Sec4}.)

In a previous article \cite{Pi3} we considered the one variable $s$%
-Fibopolynomials $\binom{n}{p}_{F\!\!_{s}\left( x,1\right) }\!$ (also
commented in \cite{Hog}), since they appear naturally as parts of the closed
formulas of sums of products of $s$-Fibonacci polynomial sequences $%
F_{sn}\left( x\right) $ presented there. However we did not study them as we
do here with bivariate $s$-Fibopolynomials $\binom{n}{p}_{F\!\!_{s}\left(
x,y\right) }\!$. In the same manner as the results of \cite{Pi2} generalized
those of \cite{Pi1}, now this article presents results that generalize those
of \cite{Pi2}. We follow the same structure and the same kind of arguments
of the proofs presented in \cite{Pi2}, in order to prove the
\textquotedblleft bivariate polynomial generalizations of \ the results in 
\cite{Pi2}\textquotedblright . This happens mainly in sections \ref{Sec2}, %
\ref{Sec3} and \ref{Sec4}. However, two results of \cite{Pi2} are improved
here:

(1) Proposition \ref{Prop2.5} (corresponding to propositions 6 and 7 of \cite%
{Pi2}), is now demonstrated with an easier induction argument.

(2) Corollary \ref{Cor4.10} (corresponding to corollary (18) of \cite{Pi2})
is now improved in the clarity of its statement and in the clarity of its
proof as well.

After we recall the basics of $Z$ transform and establish some preliminary
results in section \ref{Sec2}, we prove our main results in section \ref%
{Sec3}. In section \ref{Sec4} we establish some corollaries of the results
proved in section \ref{Sec3}. Finally, in section \ref{Sec5} we obtain
expressions for the partial derivatives of the bivariate $s$-Fibopolynomials 
$\binom{n}{p}_{F\!\!_{s}\left( x,y\right) }$.

\section{\label{Sec2}Preliminaries}

We begin this section recalling some basic facts of the main tool used in
this article, namely the $Z$ transform. (For more details see \cite{G} and 
\cite{Vi}.) The $Z$ transform maps complex sequences $a_{n}$ into complex
(holomorphic) functions $A:U\subset \mathbb{C\rightarrow C}$ given by the
Laurent series $A\left( z\right) =\sum_{n=0}^{\infty }a_{n}z^{-n}$ (also
denoted as $\mathcal{Z}\left( a_{n}\right) $; defined outside the closure $%
\overline{D}$ of the disk $D$ of convergence of the Taylor series $%
\sum_{n=0}^{\infty }a_{n}z^{n}$). If $A\left( z\right) =\mathcal{Z}\left(
a_{n}\right) $, we also write $a_{n}=\mathcal{Z}^{-1}\left( A\left( z\right)
\right) $, and we say that the sequence $a_{n}$ is the \textit{inverse }$Z$%
\textit{\ transform} of $A\left( z\right) $. Some properties of the $Z$
transform which we will be using throughout this work are the following:
(avoiding the details of regions of convergence)

(a) $\mathcal{Z}$ is linear and injective. (Same for $\mathcal{Z}^{-1}$.)

(b) \textit{Advance-shifting property.} For $k\in \mathbb{N}$ we have%
\begin{equation}
\mathcal{Z}\left( a_{n+k}\right) =z^{k}\left( \mathcal{Z}\left( a_{n}\right)
-a_{0}-\frac{a_{1}}{z}-\cdots -\frac{a_{k-1}}{z^{k-1}}\right) .  \label{2.1}
\end{equation}

Here $a_{n+k}$ is the sequence $a_{n+k}=\left( a_{k},a_{k+1},\ldots \right) $%
.

(c) \textit{Multiplication by the sequence }$\lambda ^{n}$\textit{.} If $%
\mathcal{Z}\left( a_{n}\right) =A\left( z\right) $, then%
\begin{equation}
\mathcal{Z}\left( \lambda ^{n}a_{n}\right) =A\left( \frac{z}{\lambda }%
\right) .  \label{2.2}
\end{equation}

(d) \textit{Multiplication by the sequence }$n$\textit{.} If $\mathcal{Z}%
\left( a_{n}\right) =A\left( z\right) $, then%
\begin{equation}
\mathcal{Z}\left( na_{n}\right) =-z\frac{d}{dz}A\left( z\right) .
\label{2.201}
\end{equation}

(e) \textit{Convolution theorem.} If $a_{n}$ and $b_{n}$ are two given
sequences, then%
\begin{equation}
\mathcal{Z}\left( a_{n}\ast b_{n}\right) =\mathcal{Z}\left( a_{n}\right) 
\mathcal{Z}\left( b_{n}\right) ,  \label{2.3}
\end{equation}%
where $a_{n}\ast b_{n}=\sum_{t=0}^{n}a_{t}b_{n-t}$ is the convolution of the
sequences $a_{n}$ and $b_{n}$.

Observe that according to (\ref{2.2}), if $\mathcal{Z}\left( a_{n}\right)
=A\left( z\right) $ then%
\begin{equation}
\mathcal{Z}\left( \left( -1\right) ^{n}a_{n}\right) =A\left( -z\right) ,
\label{2.4}
\end{equation}%
and%
\begin{equation}
\mathcal{Z}\left( L_{sn+m}\left( x,y\right) a_{n}\right) =\alpha ^{m}\left(
x,y\right) A\left( \frac{z}{\alpha ^{s}\left( x,y\right) }\right) +\beta
^{m}\left( x,y\right) A\left( \frac{z}{\beta ^{s}\left( x,y\right) }\right) .
\label{2.5}
\end{equation}

For given $\lambda \in \mathbb{C}$, $\lambda \neq 0$, the $Z$ transform of
the sequence $\lambda ^{n}$ is plainly%
\begin{equation}
\mathcal{Z}\left( \lambda ^{n}\right) =\sum_{n=0}^{\infty }\frac{\lambda ^{n}%
}{z^{n}}=\frac{z}{z-\lambda },  \label{2.6}
\end{equation}%
(defined for $\left\vert z\right\vert >\left\vert \lambda \right\vert $). In
particular we have that the $Z$ transform of the constant sequence $1$ is%
\begin{equation}
\mathcal{Z}\left( 1\right) =\frac{z}{z-1}.  \label{2.7}
\end{equation}

For $m\in \mathbb{Z}$ given, the $Z$ transforms of the sequences $%
F_{sn+m}\left( x,y\right) $ and $L_{sn+m}\left( x,y\right) $ are 
\begin{equation}
\mathcal{Z}\left( F_{sn+m}\left( x,y\right) \right) =\frac{z\left(
F_{m}\left( x,y\right) z+\left( -y\right) ^{m}F_{s-m}\left( x,y\right)
\right) }{z^{2}-L_{s}\!\left( x,y\right) z+\left( -y\right) ^{s}},
\label{2.8}
\end{equation}%
and%
\begin{equation}
\mathcal{Z}\left( L_{sn+m}\left( x,y\right) \right) =\frac{z\left(
L_{m}\left( x,y\right) z-\left( -y\right) ^{m}L_{s-m}\left( x,y\right)
\right) }{z^{2}-L_{s}\!\left( x,y\right) z+\left( -y\right) ^{s}}.
\label{2.9}
\end{equation}

In fact, by using Binet's formulas and (\ref{2.6}), we have that:%
\begin{eqnarray*}
&&\mathcal{Z}\left( F_{sn+m}\left( x,y\right) \right) \\
&=&\frac{1}{\sqrt{x^{2}+4y}}\mathcal{Z}\left( \alpha ^{m}\left( x,y\right)
\left( \alpha ^{s}\left( x,y\right) \right) ^{n}-\beta ^{m}\left( x,y\right)
\left( \beta ^{s}\left( x,y\right) \right) ^{n}\right) \\
&=&\frac{1}{\sqrt{x^{2}+4y}}\left( \alpha ^{m}\left( x,y\right) \frac{z}{%
z-\alpha ^{s}\left( x,y\right) }-\beta ^{m}\left( x,y\right) \frac{z}{%
z-\beta ^{s}\left( x,y\right) }\right) \\
&=&\frac{z}{\sqrt{x^{2}+4y}}\left( \frac{\left( \alpha ^{m}\left( x,y\right)
-\beta ^{m}\left( x,y\right) \right) z+\alpha ^{m}\left( x,y\right) \beta
^{m}\left( x,y\right) \left( -\beta ^{s-m}\left( x,y\right) +\alpha
^{s-m}\left( x,y\right) \right) }{z^{2}-L_{s}\!\left( x,y\right) z+\left(
-y\right) ^{s}}\right) \\
&=&\frac{z\left( F_{m}\left( x,y\right) z+\left( -y\right) ^{m}F_{s-m}\left(
x,y\right) \right) }{z^{2}-L_{s}\!\left( x,y\right) z+\left( -y\right) ^{s}},
\end{eqnarray*}%
which shows (\ref{2.7}). Similarly%
\begin{eqnarray*}
&&\mathcal{Z}\left( L_{sn+m}\left( x,y\right) \right) \\
&=&\mathcal{Z}\left( \alpha ^{m}\left( x,y\right) \left( \alpha ^{s}\left(
x,y\right) \right) ^{n}+\beta ^{m}\left( x,y\right) \left( \beta ^{s}\left(
x,y\right) \right) ^{n}\right) \\
&=&\alpha ^{m}\left( x,y\right) \frac{z}{z-\alpha ^{s}\left( x,y\right) }%
+\beta ^{m}\left( x,y\right) \frac{z}{z-\beta ^{s}\left( x,y\right) } \\
&=&z\left( \frac{\left( \alpha ^{m}\left( x,y\right) +\beta ^{m}\left(
x,y\right) \right) z-\alpha ^{m}\left( x,y\right) \beta ^{m}\left(
x,y\right) \left( \beta ^{s-m}\left( x,y\right) +\alpha ^{s-m}\left(
x,y\right) \right) }{z^{2}-L_{s}\!\left( x,y\right) z+\left( -y\right) ^{s}}%
\right) \\
&=&\frac{z\left( L_{m}\left( x,y\right) z-\left( -y\right) ^{m}L_{s-m}\left(
x,y\right) \right) }{z^{2}-L_{s}\!\left( x,y\right) z+\left( -y\right) ^{s}},
\end{eqnarray*}%
which shows (\ref{2.8}). In particular we have%
\begin{equation}
\mathcal{Z}\left( F_{sn}\left( x,y\right) \right) =\frac{zF\!_{s}\!\left(
x,y\right) }{z^{2}-L_{s}\!\left( x,y\right) z+\left( -y\right) ^{s}},
\label{2.10}
\end{equation}%
and%
\begin{equation}
\mathcal{Z}\left( L_{sn}\left( x,y\right) \right) =\frac{z\left(
2z-L_{s}\!\left( x,y\right) \right) }{z^{2}-L_{s}\!\left( x,y\right)
z+\left( -y\right) ^{s}}.  \label{2.11}
\end{equation}

If we write $\mathcal{Z}\left( F_{sn+m}\left( x,y\right) \right) $ as%
\begin{equation}
\mathcal{Z}\left( F_{sn+m}\left( x,y\right) \right) =\frac{F_{m}\left(
x,y\right) }{F\!_{s}\!\left( x,y\right) }\frac{z^{2}F\!_{s}\!\left(
x,y\right) }{z^{2}-L_{s}\!\left( x,y\right) z+\left( -y\right) ^{s}}+\frac{%
\left( -y\right) ^{m}F_{s-m}\left( x,y\right) }{F\!_{s}\!\left( x,y\right) }%
\frac{zF\!_{s}\!\left( x,y\right) }{z^{2}-L_{s}\!\left( x,y\right) z+\left(
-y\right) ^{s}},  \label{2.12}
\end{equation}%
we see at once that%
\begin{equation*}
F\!_{s}\!\left( x,y\right) F_{sn+m}\left( x,y\right) -F_{m}\left( x,y\right)
F_{s\left( n+1\right) }\left( x,y\right) =\left( -y\right) ^{m}F_{s-m}\left(
x,y\right) F_{sn}\left( x,y\right) ,
\end{equation*}%
which is essentially (\ref{1.11}). Similarly, if we write $\mathcal{Z}\left(
L_{sn+m}\left( x,y\right) \right) $ as%
\begin{equation}
\mathcal{Z}\left( L_{sn+m}\left( x,y\right) \right) =\frac{L_{m}\left(
x,y\right) }{F\!_{s}\!\left( x,y\right) }\frac{z^{2}F\!_{s}\!\left(
x,y\right) }{z^{2}-L_{s}\!\left( x,y\right) z+\left( -y\right) ^{s}}-\frac{%
\left( -y\right) ^{m}L_{s-m}\left( x,y\right) }{F\!_{s}\!\left( x,y\right) }%
\frac{zF\!_{s}\!\left( x,y\right) }{z^{2}-L_{s}\!\left( x,y\right) z+\left(
-y\right) ^{s}},  \label{2.14}
\end{equation}%
we obtain that%
\begin{equation*}
L_{sn+m}\left( x,y\right) F\!_{s}\!\left( x,y\right) -L_{m}\left( x,y\right)
F_{s\left( n+1\right) }\left( x,y\right) =-\left( -y\right)
^{m}L_{s-m}\left( x,y\right) F_{sn}\left( x,y\right) ,
\end{equation*}%
which is essentially (\ref{1.12}).

Let us use the $Z$ transform to prove that%
\begin{equation}
nL_{n}\left( x,y\right) -xF_{n}\left( x,y\right) =\left( x^{2}+4y\right)
F_{n}\left( x,y\right) \ast F_{n}\left( x,y\right) .  \label{2.156}
\end{equation}

We will use (\ref{2.10}) and (\ref{2.11}) with $s=1$. First note the
according to (\ref{2.201}) we have that%
\begin{equation*}
\mathcal{Z}\left( nL_{n}\left( x,y\right) \right) =-z\frac{d}{dz}\frac{%
z\left( 2z-x\right) }{z^{2}-xz+\left( -y\right) ^{s}}=z\frac{xz^{2}+4yz-xy}{%
\left( z^{2}-xz-y\right) ^{2}}.
\end{equation*}

Thus we have 
\begin{equation}
\mathcal{Z}\left( nL_{n}\left( x,y\right) \right) -\mathcal{Z}\left(
xF_{n}\left( x,y\right) \right) =z\frac{xz^{2}+4yz-xy}{\left(
z^{2}-xz-y\right) ^{2}}-\frac{xz}{z^{2}-xz-y}=\frac{\left( x^{2}+4y\right)
z^{2}}{\left( z^{2}-xz-y\right) ^{2}}.  \label{2.157}
\end{equation}

Then, (\ref{2.156}) follows from (\ref{2.157}) and convolution theorem (\ref%
{2.3}).

Now we begin with a list of preliminary results that will be used in
sections \ref{Sec3} and \ref{Sec4}.

\begin{proposition}
\label{Prop2.1}\textit{Let }$k\in \mathbb{N}^{\prime }$\textit{\ be given.
We have }%
\begin{equation}
\left( -1\right) ^{s+1}\dprod\limits_{j=0}^{k}\left( z-\alpha ^{sj}\left(
x,y\right) \beta ^{s\left( k-j\right) }\left( x,y\right) \right)
=\sum_{i=0}^{k+1}\left( -1\right) ^{\frac{\left( si+2(s+1)\right) \left(
i+1\right) }{2}}\dbinom{k+1}{i}_{F\!_{s}\!\left( x,y\right) }y^{\frac{%
si\left( i-1\right) }{2}}z^{k+1-i}.  \label{2.16}
\end{equation}
\end{proposition}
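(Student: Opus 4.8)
Write $P_{k}(z)=\prod_{j=0}^{k}\bigl( z-\alpha ^{sj}\left( x,y\right) \beta ^{s\left( k-j\right) }\left( x,y\right) \bigr) $ for the monic degree-$(k+1)$ polynomial on the left of (\ref{2.16}), so that $P_{k}(z)=\sum_{i=0}^{k+1}\left( -1\right) ^{i}e_{i}\,z^{k+1-i}$, where $e_{i}$ is the $i$-th elementary symmetric function of the $k+1$ quantities $\alpha ^{sj}\beta ^{s\left( k-j\right) }$, $j=0,\dots ,k$. The plan is: (i) evaluate $e_{i}$ in closed form using that these quantities form a geometric progression; (ii) convert that closed form into bivariate $s$-Fibopolynomials via Binet's formulas; (iii) reconcile the resulting sign and power of $y$ with the exponents appearing in (\ref{2.16}). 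For step (i), set $q=\bigl( \alpha \left( x,y\right) /\beta \left( x,y\right) \bigr) ^{s}$, so that $\alpha ^{sj}\beta ^{s\left( k-j\right) }=\beta ^{sk}q^{\,j}$ and hence $e_{i}=\beta ^{ski}\,e_{i}(1,q,\dots ,q^{k})$; the finite $q$-binomial theorem of Gauss then gives $e_{i}(1,q,\dots ,q^{k})=q^{\,i(i-1)/2}\binom{k+1}{i}_{q}$, where $\binom{k+1}{i}_{q}:=\prod_{t=1}^{i}\frac{q^{\,k+2-t}-1}{q^{\,t}-1}$ is the Gaussian binomial coefficient.

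For step (ii), Binet's formulas (\ref{1.2})--(\ref{1.3}) give $F_{sm}\left( x,y\right) =\frac{\alpha ^{sm}-\beta ^{sm}}{\alpha -\beta }=\frac{\beta ^{sm}\left( q^{m}-1\right) }{\alpha -\beta }$, so telescoping the defining product (\ref{1.14}) yields $\binom{k+1}{i}_{F_{s}\left( x,y\right) }=\beta ^{\,si\left( k+1-i\right) }\binom{k+1}{i}_{q}$. Substituting this, together with $q^{\,i(i-1)/2}=\alpha ^{\,si(i-1)/2}\beta ^{-si(i-1)/2}$, into $P_{k}(z)=\sum_{i=0}^{k+1}(-1)^{i}\beta ^{ski}q^{\,i(i-1)/2}\binom{k+1}{i}_{q}z^{k+1-i}$, and using $\alpha \beta =-y$, the accumulated powers of $\alpha $ and of $\beta $ each collapse to $si(i-1)/2$, giving $P_{k}(z)=\sum_{i=0}^{k+1}(-1)^{i}\left( -y\right) ^{\,si(i-1)/2}\binom{k+1}{i}_{F_{s}\left( x,y\right) }z^{k+1-i}$.

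For step (iii), multiply by $\left( -1\right) ^{s+1}$ and write $\left( -y\right) ^{\,si(i-1)/2}=\left( -1\right) ^{\,si(i-1)/2}y^{\,si(i-1)/2}$; the total sign becomes $\left( -1\right) ^{\,s+1+i+si(i-1)/2}$, and since $\tfrac{(si+2(s+1))(i+1)}{2}-\bigl( s+1+i+\tfrac{si(i-1)}{2}\bigr) =2si$ is even, this sign equals $\left( -1\right) ^{(si+2(s+1))(i+1)/2}$, which is exactly the right-hand side of (\ref{2.16}). The step I expect to be the main obstacle is precisely this bookkeeping: keeping track, in (ii) and (iii), of every power of $\alpha $ and $\beta $ and verifying that they amalgamate into a single power of $-y$ and into the stated power of $-1$. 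A secondary point to address is the legitimacy of the $q$-binomial manipulations when $\alpha /\beta $ is a root of unity or $\alpha =\pm \beta $ (that is, when $x^{2}+4y$ is a perfect square or $0$); this is handled by regarding (\ref{2.16}) as an identity of polynomials in $x,y$, which holds because both sides agree on a Zariski-dense set.

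Alternatively, in the spirit of the inductive proofs used elsewhere in the paper, one may induct on $k$: the factorization $P_{k}(z)=\bigl( z-\alpha ^{sk}\bigr) \beta ^{sk}P_{k-1}\!\left( z/\beta ^{s}\right) $ lets one match coefficients of $z^{k+1-i}$, reducing the inductive step to the Pascal-type recurrence $\binom{k+1}{i}_{F_{s}\left( x,y\right) }=\beta ^{si}\binom{k}{i}_{F_{s}\left( x,y\right) }+\alpha ^{s(k-i+1)}\binom{k}{i-1}_{F_{s}\left( x,y\right) }$ for $s$-Fibopolynomials, itself a quick consequence of Binet's formulas; the base case $k=0$ is a one-line check.
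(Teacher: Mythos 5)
Your argument is correct, and its main route is genuinely different from the paper's. The paper proves (\ref{2.16}) by induction on $k$, using the factorization $\prod_{j=0}^{k+1}\left( z-\alpha ^{sj}\beta ^{s\left( k+1-j\right) }\right) =\left( z-\alpha ^{s\left( k+1\right) }\right) \beta ^{s\left( k+1\right) }\prod_{j=0}^{k}\left( \frac{z}{\beta ^{s}}-\alpha ^{sj}\beta ^{s\left( k-j\right) }\right) $ together with the Binet-provable identity $\beta ^{si}F_{s\left( k+2-i\right) }+\left( -1\right) ^{-s\left( i+1\right) }\alpha ^{s\left( k+1\right) }\beta ^{s\left( i-1\right) }y^{s\left( 1-i\right) }F_{si}=F_{s\left( k+2\right) }$; so your closing ``alternative'' induction, with its Pascal-type recurrence $\binom{k+1}{i}_{F_{s}\left( x,y\right) }=\beta ^{si}\binom{k}{i}_{F_{s}\left( x,y\right) }+\alpha ^{s\left( k-i+1\right) }\binom{k}{i-1}_{F_{s}\left( x,y\right) }$ (which is equivalent to that identity), is essentially the paper's proof. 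Your primary route instead computes the coefficients in closed form: the roots $\alpha ^{sj}\beta ^{s\left( k-j\right) }=\beta ^{sk}q^{j}$ form a geometric progression with $q=\left( \alpha /\beta \right) ^{s}$, Gauss's $q$-binomial theorem gives $e_{i}=\beta ^{ski}q^{i\left( i-1\right) /2}\binom{k+1}{i}_{q}$, and the telescoping identity $\binom{k+1}{i}_{F_{s}\left( x,y\right) }=\beta ^{si\left( k+1-i\right) }\binom{k+1}{i}_{q}$ converts this to $(-y)^{si\left( i-1\right) /2}\binom{k+1}{i}_{F_{s}\left( x,y\right) }$; your exponent bookkeeping checks out (the two sign exponents differ by $2si$, and the powers of $\alpha $ and $\beta $ do collapse to $\left( \alpha \beta \right) ^{si\left( i-1\right) /2}$). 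What your route buys is conceptual transparency — it exhibits the bivariate $s$-Fibopolynomials as rescaled Gaussian binomials, so the whole identity is a specialization of the $q$-binomial theorem, with no induction and no auxiliary Fibonacci identity. What it costs is the genericity caveat you correctly flag: the manipulations need $q^{t}\neq 1$ and $F_{st}\neq 0$, so one must close the argument by observing that both sides are polynomials in $x,y,z$ (the left side because its coefficients are symmetric in $\alpha ,\beta $) agreeing on a Zariski-dense set; the paper's induction is self-contained and avoids this issue entirely.
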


\begin{proof}
We proceed by induction on $k$. For $k=0$ the result is clearly true (both
sides are equal to $\left( -1\right) ^{s+1}\left( z-1\right) $). Let us
suppose the formula is true for a given $k\in \mathbb{N}$. Then we have%
\begin{eqnarray*}
&&\left( -1\right) ^{s+1}\dprod\limits_{j=0}^{k+1}\left( z-\alpha
^{sj}\left( x,y\right) \beta ^{s\left( k+1-j\right) }\left( x,y\right)
\right) \\
&=&\left( -1\right) ^{s+1}\left( z-\alpha ^{s\left( k+1\right) }\left(
x,y\right) \right) \beta ^{s\left( k+1\right) }\left( x,y\right)
\dprod\limits_{j=0}^{k}\left( \frac{z}{\beta ^{s}\left( x,y\right) }-\alpha
^{sj}\left( x,y\right) \beta ^{s\left( k-j\right) }\left( x,y\right) \right)
,
\end{eqnarray*}

The induction hypothesis allows us to write%
\begin{eqnarray*}
&&\left( -1\right) ^{s+1}\dprod\limits_{j=0}^{k+1}\left( z-\alpha
^{sj}\left( x,y\right) \beta ^{s\left( k+1-j\right) }\left( x,y\right)
\right) \\
&=&\left( z-\alpha ^{s\left( k+1\right) }\left( x,y\right) \right) \beta
^{s\left( k+1\right) }\left( x,y\right) \sum_{i=0}^{k+1}\left( -1\right) ^{%
\frac{\left( si+2(s+1)\right) \left( i+1\right) }{2}}\dbinom{k+1}{i}%
_{F\!_{s}\!\left( x,y\right) }y^{\frac{si\left( i-1\right) }{2}}\left( \frac{%
z}{\beta ^{s}\left( x,y\right) }\right) ^{k+1-i} \\
&=&\left( z-\alpha ^{s\left( k+1\right) }\left( x,y\right) \right)
\sum_{i=0}^{k+1}\left( -1\right) ^{\frac{\left( si+2(s+1)\right) \left(
i+1\right) }{2}}\dbinom{k+1}{i}_{F\!_{s}\!\left( x,y\right) }\beta
^{si}\left( x,y\right) y^{\frac{si\left( i-1\right) }{2}}z^{k+1-i}.
\end{eqnarray*}

Some further simplifications give us%
\begin{eqnarray*}
&&\left( -1\right) ^{s+1}\dprod\limits_{j=0}^{k+1}\left( z-\alpha
^{sj}\left( x,y\right) \beta ^{s\left( k+1-j\right) }\left( x,y\right)
\right) \\
&=&\sum_{i=0}^{k+1}\left( -1\right) ^{\frac{\left( si+2(s+1)\right) \left(
i+1\right) }{2}}\dbinom{k+1}{i}_{F\!_{s}\!\left( x,y\right) }\beta
^{si}\left( x,y\right) y^{\frac{si\left( i-1\right) }{2}}z^{k+2-i} \\
&&-\alpha ^{s\left( k+1\right) }\left( x,y\right) \sum_{i=1}^{k+2}\left(
-1\right) ^{\frac{\left( s\left( i-1\right) +2(s+1)\right) i}{2}}\dbinom{k+1%
}{i-1}_{F\!_{s}\!\left( x,y\right) }\beta ^{s\left( i-1\right) }\left(
x,y\right) y^{\frac{s\left( i-1\right) \left( i-2\right) }{2}}z^{k+2-i} \\
&=&\sum_{i=0}^{k+2}\left( -1\right) ^{\frac{\left( si+2(s+1)\right) \left(
i+1\right) }{2}}\dbinom{k+2}{i}_{F\!_{s}\!\left( x,y\right) }\frac{1}{%
F_{s\left( k+2\right) }\left( x,y\right) } \\
&&\left( 
\begin{array}{c}
\beta ^{si}\left( x,y\right) F_{s\left( k+2-i\right) }\left( x,y\right) \\ 
+\left( -1\right) ^{-s\left( i+1\right) }\alpha ^{s\left( k+1\right) }\left(
x,y\right) \beta ^{s\left( i-1\right) }\left( x,y\right) y^{s\left(
1-i\right) }F_{si}\left( x,y\right)%
\end{array}%
\right) y^{\frac{si\left( i-1\right) }{2}}z^{k+2-i} \\
&=&\sum_{i=0}^{k+2}\left( -1\right) ^{\frac{\left( si+2(s+1)\right) \left(
i+1\right) }{2}}\dbinom{k+2}{i}_{F\!_{s}\!\left( x,y\right) }y^{\frac{%
si\left( i-1\right) }{2}}z^{k+2-i}
\end{eqnarray*}

as wanted. Here we used that%
\begin{equation*}
\beta ^{si}\left( x,y\right) F_{s\left( k+2-i\right) }\left( x,y\right)
+\left( -1\right) ^{-s\left( i+1\right) }\alpha ^{s\left( k+1\right) }\left(
x,y\right) \beta ^{s\left( i-1\right) }\left( x,y\right) y^{s\left(
1-i\right) }F_{si}\left( x,y\right) =F_{s\left( k+2\right) }\left(
x,y\right) ,
\end{equation*}%
which can be proved easily by using Binet's formulas.
\end{proof}

We will denote the $\left( k\!+\!1\right) $-th degree $z$-polynomial of the
right-hand side (or left-hand side) of (\ref{2.16}) as $D_{s,k+1}\!\left(
x,y;z\right) $.

We claim that if $k$ is even, $k=2p$ say, then 
\begin{equation}
D_{s,2p+1}\left( x,y;z\right) =\left( -1\right) ^{s+1}\left( z-\left(
-y\right) ^{sp}\right) \prod_{j=0}^{p-1}\left( z^{2}-\left( -y\right)
^{sj}L_{2s\left( p-j\right) }\left( x,y\right) z+y^{2ps}\right) .  \notag
\end{equation}

In fact, we have%
\begin{eqnarray*}
&&D_{s,2p+1}\left( x,y;z\right) \\
&=&\left( -1\right) ^{s+1}\prod_{j=0}^{2p}\left( z-\alpha ^{sj}\left(
x,y\right) \beta ^{s\left( 2p-j\right) }\left( x,y\right) \right) \\
&=&\left( -1\right) ^{s+1}\left( z-\alpha ^{sp}\left( x,y\right) \beta
^{sp}\left( x,y\right) \right) \\
&&\times \left( \prod_{j=0}^{p-1}\left( z-\alpha ^{sj}\left( x,y\right)
\beta ^{s\left( 2p-j\right) }\left( x,y\right) \right) \right) \left(
\prod_{j=p+1}^{2p}\left( z-\alpha ^{sj}\left( x,y\right) \beta ^{s\left(
2p-j\right) }\left( x,y\right) \right) \right) \\
&=&\left( -1\right) ^{s+1}\left( z-\left( -y\right) ^{sp}\right)
\prod_{j=0}^{p-1}\left( z-\alpha ^{sj}\left( x,y\right) \beta ^{s\left(
2p-j\right) }\left( x,y\right) \right) \left( z-\alpha ^{s\left( 2p-j\right)
}\left( x,y\right) \beta ^{sj}\left( x,y\right) \right) \\
&=&\left( -1\right) ^{s+1}\left( z-\left( -y\right) ^{sp}\right)
\prod_{j=0}^{p-1}\left( z^{2}-\left( -y\right) ^{sj}L_{2s\left( p-j\right)
}\left( x,y\right) z+y^{2ps}\right) ,
\end{eqnarray*}%
as claimed. On the other hand, if $k$ is odd, $k=2p-1$ say, then%
\begin{equation}
D_{s,2p}\left( x,y;z\right) =\left( -1\right) ^{s+1}\prod_{j=0}^{p-1}\left(
z^{2}-\left( -y\right) ^{sj}L_{s\left( 2p-1-2j\right) }\left( x,y\right)
z+\left( -y\right) ^{\left( 2p-1\right) s}\right) .  \notag
\end{equation}

In fact, we have%
\begin{eqnarray*}
D_{s,2p}\left( x,y;z\right) &=&\left( -1\right)
^{s+1}\prod_{j=0}^{2p-1}\left( z-\alpha ^{sj}\left( x,y\right) \beta
^{s\left( 2p-1-j\right) }\left( x,y\right) \right) \\
&=&\left( -1\right) ^{s+1}\prod_{j=0}^{p-1}\left( z-\alpha ^{sj}\left(
x,y\right) \beta ^{s\left( 2p-1-j\right) }\left( x,y\right) \right)
\prod_{j=p}^{2p-1}\left( z-\alpha ^{sj}\left( x,y\right) \beta ^{s\left(
2p-1-j\right) }\left( x,y\right) \right) \\
&=&\left( -1\right) ^{s+1}\prod_{j=0}^{p-1}\left( z-\alpha ^{sj}\left(
x,y\right) \beta ^{s\left( 2p-1-j\right) }\left( x,y\right) \right) \left(
z-\alpha ^{s\left( 2p-1-j\right) }\left( x,y\right) \beta ^{sj}\left(
x,y\right) \right) \\
&=&\left( -1\right) ^{s+1}\prod_{j=0}^{p-1}\left( z^{2}-\left( -y\right)
^{sj}L_{s\left( 2p-1-2j\right) }\left( x,y\right) z+\left( -y\right)
^{\left( 2p-1\right) s}\right) ,
\end{eqnarray*}%
as claimed.

Summarizing, we have that%
\begin{eqnarray}
D_{s,2p+1}\left( x,y;z\right) &=&\sum_{i=0}^{2p+1}\left( -1\right) ^{\frac{%
\left( si+2(s+1)\right) \left( i+1\right) }{2}}\dbinom{2p+1}{i}%
_{F\!_{s}\!\left( x,y\right) }y^{\frac{si\left( i-1\right) }{2}}z^{2p+1-i}
\label{2.17} \\
&=&\left( -1\right) ^{s+1}\left( z-\left( -y\right) ^{sp}\right)
\prod_{j=0}^{p-1}\left( z^{2}-\left( -y\right) ^{sj}L_{2s\left( p-j\right)
}\left( x,y\right) z+y^{2ps}\right) ,  \notag
\end{eqnarray}%
and%
\begin{eqnarray}
D_{s,2p+1}\left( x,y;z\right) &=&\sum_{i=0}^{2p+1}\left( -1\right) ^{\frac{%
\left( si+2(s+1)\right) \left( i+1\right) }{2}}\dbinom{2p+1}{i}%
_{F\!_{s}\!\left( x,y\right) }y^{\frac{si\left( i-1\right) }{2}}z^{2p+1-i}
\label{2.18} \\
&=&\left( -1\right) ^{s+1}\left( z-\left( -y\right) ^{sp}\right)
\prod_{j=0}^{p-1}\left( z^{2}-\left( -y\right) ^{sj}L_{2s\left( p-j\right)
}\left( x,y\right) z+y^{2ps}\right) .  \notag
\end{eqnarray}

We can obtain some additional facts by setting $z=y^{sp}$ in (\ref{2.17}).
We have

\begin{itemize}
\item If $s$ or $p$ is even, we see at once that%
\begin{equation*}
\sum_{i=0}^{2p+1}\left( -1\right) ^{\frac{\left( si+2(s+1)\right) \left(
i+1\right) }{2}}\dbinom{2p+1}{i}_{F\!_{s}\!\left( x,y\right) }y^{\frac{%
si\left( i-1\right) }{2}-spi}=0.
\end{equation*}

\item If $s$ and $p$ are odd, then%
\begin{eqnarray*}
&&\sum_{i=0}^{2p+1}\left( -1\right) ^{\frac{\left( si+2(s+1)\right) \left(
i+1\right) }{2}}\dbinom{2p+1}{i}_{F\!_{s}\!\left( x,y\right) }y^{\frac{%
si\left( i-1\right) }{2}}\left( y^{sp}\right) ^{2p+1-i} \\
&=&\left( -1\right) ^{s+1}\left( y^{sp}-\left( -y\right) ^{sp}\right)
\prod_{j=0}^{p-1}\left( 2y^{2sp}-\left( -1\right) ^{sp}\left( -y\right)
^{sj+sp}L_{2s\left( p-j\right) }\left( x,y\right) \right) \\
&=&\left( -1\right) ^{s+1}2y^{sp}\prod_{j=0}^{p-1}\left( -y\right) ^{s\left(
p+j\right) }\left( 2\left( -y\right) ^{s\left( p-j\right) }+L_{2s\left(
p-j\right) }\left( x,y\right) \right) \\
&=&2y^{sp}\prod_{j=0}^{p-1}\left( -y\right) ^{s\left( p+j\right) }L_{s\left(
p-j\right) }^{2}\left( x,y\right) .
\end{eqnarray*}

That is, if $s$ and $p$ are odd we have that%
\begin{equation*}
\sum_{i=0}^{2p+1}\left( -1\right) ^{\frac{\left( si+2(s+1)\right) \left(
i+1\right) }{2}}\dbinom{2p+1}{i}_{F\!_{s}\!\left( x,y\right) }y^{\frac{%
si\left( i-1\right) }{2}+sp\left( 2p-i\right) }=2\prod_{j=0}^{p-1}\left(
-y\right) ^{s\left( p+j\right) }L_{s\left( p-j\right) }^{2}\left( x,y\right)
.
\end{equation*}
\end{itemize}

\begin{proposition}
\label{Prop2.2}\textit{Let }$t,k\in \mathbb{N}^{\prime }$\textit{, }$m\in 
\mathbb{Z}$\textit{\ be given. Then}

\textit{(a)}%
\begin{eqnarray}
&&\frac{\alpha ^{sk}\left( x,y\right) }{\sum_{i=0}^{t+1}\left( -1\right) ^{%
\frac{\left( si+2(s+1)\right) \left( i+1\right) }{2}}\dbinom{t+1}{i}%
_{F\!_{s}\!\left( x,y\right) }\alpha ^{si}\left( x,y\right) y^{\frac{%
si\left( i-1\right) }{2}}z^{t+1-i}}  \notag \\
&&+\frac{\beta ^{sk}\left( x,y\right) }{\sum_{i=0}^{t+1}\left( -1\right) ^{%
\frac{\left( si+2(s+1)\right) \left( i+1\right) }{2}}\dbinom{t+1}{i}%
_{F\!_{s}\!\left( x,y\right) }\beta ^{si}\left( x,y\right) y^{\frac{si\left(
i-1\right) }{2}}z^{t+1-i}}  \notag \\
&=&\frac{L_{sk}\left( x,y\right) z-\left( -y\right) ^{sk}L_{s\left(
t-k+1\right) }\left( x,y\right) }{\sum_{i=0}^{t+2}\left( -1\right) ^{\frac{%
\left( si+2(s+1)\right) \left( i+1\right) }{2}}\dbinom{t+2}{i}%
_{F\!_{s}\!\left( x,y\right) }y^{\frac{i\left( i-1\right) }{2}}z^{t+2-i}}.
\label{2.24}
\end{eqnarray}

\textit{(b)}%
\begin{eqnarray}
&&\frac{\alpha ^{m+sk}\left( x,y\right) }{\sum_{i=0}^{t+1}\left( -1\right) ^{%
\frac{\left( si+2(s+1)\right) \left( i+1\right) }{2}}\dbinom{t+1}{i}%
_{F\!_{s}\!\left( x,y\right) }\alpha ^{si}\left( x,y\right) y^{\frac{%
si\left( i-1\right) }{2}}z^{t+1-i}}  \notag \\
&&-\frac{\beta ^{m+sk}\left( x,y\right) }{\sum_{i=0}^{t+1}\left( -1\right) ^{%
\frac{\left( si+2(s+1)\right) \left( i+1\right) }{2}}\dbinom{t+1}{i}%
_{F\!_{s}\!\left( x,y\right) }\beta ^{si}\left( x,y\right) y^{\frac{si\left(
i-1\right) }{2}}z^{t+1-i}}  \notag \\
&=&\frac{\sqrt{x^{2}+4y}\left( F_{sk+m}\left( x,y\right) z+\left( -y\right)
^{sk+m}F_{s\left( t-k+1\right) -m}\left( x,y\right) \right) }{%
\sum_{i=0}^{t+2}\left( -1\right) ^{\frac{\left( si+2(s+1)\right) \left(
i+1\right) }{2}}\dbinom{t+2}{i}_{F\!_{s}\!\left( x,y\right) }y^{\frac{%
si\left( i-1\right) }{2}}z^{t+2-i}}.  \label{2.25}
\end{eqnarray}
\end{proposition}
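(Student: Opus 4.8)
The plan is to reduce both identities to the polynomial factorization supplied by Proposition \ref{Prop2.1}, together with $\alpha(x,y)\beta(x,y)=-y$ and Binet's formulas (\ref{1.2}). Recall that $D_{s,t+1}(x,y;w)$ denotes the $(t+1)$-degree $w$-polynomial on the right of (\ref{2.16}) (with $k$ replaced by $t$), so that $D_{s,t+1}(x,y;w)=(-1)^{s+1}\dprod_{j=0}^{t}\bigl(w-\alpha^{sj}(x,y)\beta^{s(t-j)}(x,y)\bigr)$. The first step is to notice that the two denominators on the left of (\ref{2.24}) and (\ref{2.25}) are dilations of $D_{s,t+1}$:
\[
\sum_{i=0}^{t+1}(-1)^{\frac{(si+2(s+1))(i+1)}{2}}\dbinom{t+1}{i}_{F_{s}(x,y)}\alpha^{si}(x,y)y^{\frac{si(i-1)}{2}}z^{t+1-i}=\alpha^{s(t+1)}(x,y)\,D_{s,t+1}\!\bigl(x,y;\tfrac{z}{\alpha^{s}(x,y)}\bigr),
\]
and the same identity with $\alpha$ replaced by $\beta$; denote the two left-hand sides by $P_{\alpha}(z)$ and $P_{\beta}(z)$ respectively.

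Next I would substitute the product form of $D_{s,t+1}$ into the right-hand side above and absorb the factor $\alpha^{s(t+1)}(x,y)$, one copy of $\alpha^{s}(x,y)$ at a time, into the $t+1$ factors of the product; after a shift of the product index this gives
\[
P_{\alpha}(z)=(-1)^{s+1}\dprod_{j=1}^{t+1}\bigl(z-\alpha^{sj}(x,y)\beta^{s(t+1-j)}(x,y)\bigr),\qquad P_{\beta}(z)=(-1)^{s+1}\dprod_{j=0}^{t}\bigl(z-\alpha^{sj}(x,y)\beta^{s(t+1-j)}(x,y)\bigr).
\]
Meanwhile (\ref{2.16}) with $k=t+1$ identifies the common denominator on the right of (\ref{2.24}) and (\ref{2.25}) as $D_{s,t+2}(x,y;z)=(-1)^{s+1}\dprod_{j=0}^{t+1}\bigl(z-\alpha^{sj}(x,y)\beta^{s(t+1-j)}(x,y)\bigr)$. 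Comparing these three products shows that $P_{\alpha}(z)$ and $P_{\beta}(z)$ each divide $D_{s,t+2}(x,y;z)$ with quotient a single linear factor: $D_{s,t+2}(x,y;z)/P_{\alpha}(z)=z-\beta^{s(t+1)}(x,y)$ (the missing $j=0$ factor) and $D_{s,t+2}(x,y;z)/P_{\beta}(z)=z-\alpha^{s(t+1)}(x,y)$ (the missing $j=t+1$ factor).

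Given these quotients, both parts finish in one line. For (a), bringing the left-hand side of (\ref{2.24}) over the common denominator $D_{s,t+2}(x,y;z)$ produces the numerator $\alpha^{sk}(x,y)\bigl(z-\beta^{s(t+1)}(x,y)\bigr)+\beta^{sk}(x,y)\bigl(z-\alpha^{s(t+1)}(x,y)\bigr)$, which, using $L_{n}(x,y)=\alpha^{n}(x,y)+\beta^{n}(x,y)$ and $(\alpha(x,y)\beta(x,y))^{sk}=(-y)^{sk}$, equals $L_{sk}(x,y)z-(-y)^{sk}L_{s(t-k+1)}(x,y)$, as required. For (b), the analogous numerator is $\alpha^{m+sk}(x,y)\bigl(z-\beta^{s(t+1)}(x,y)\bigr)-\beta^{m+sk}(x,y)\bigl(z-\alpha^{s(t+1)}(x,y)\bigr)$; applying $\alpha^{n}(x,y)-\beta^{n}(x,y)=\sqrt{x^{2}+4y}\,F_{n}(x,y)$ to the coefficient of $z$, and once more to the constant term after pulling out $(\alpha(x,y)\beta(x,y))^{m+sk}=(-y)^{m+sk}$, gives $\sqrt{x^{2}+4y}\bigl(F_{sk+m}(x,y)z+(-y)^{sk+m}F_{s(t-k+1)-m}(x,y)\bigr)$, as required; the negative-index extension of $F_{n}(x,y)$ recalled in the Introduction is what makes Binet's identity legitimate when $s(t-k+1)-m$ is negative.

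I expect the only place needing care is the index bookkeeping in the second step: absorbing $\alpha^{s(t+1)}(x,y)$ (resp. $\beta^{s(t+1)}(x,y)$) into the product and shifting so that $P_{\alpha}$ runs over $j=1,\dots,t+1$ while $P_{\beta}$ runs over $j=0,\dots,t$, and then reading off that $D_{s,t+2}/P_{\alpha}$ and $D_{s,t+2}/P_{\beta}$ are exactly the $j=0$ and $j=t+1$ factors. Once those endpoints are pinned down the numerators collapse as claimed, and everything else is routine manipulation with Binet's formulas.
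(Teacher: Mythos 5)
Your argument is correct and is essentially the paper's own proof: both rewrite the left-hand denominators as dilations of $D_{s,t+1}$, use Proposition \ref{Prop2.1} to pass to the product form, recognize that after absorbing $\alpha ^{s\left( t+1\right) }\left( x,y\right) $ (resp. $\beta ^{s\left( t+1\right) }\left( x,y\right) $) each denominator is $D_{s,t+2}\left( x,y;z\right) $ with exactly one linear factor removed, and then combine over $D_{s,t+2}$ and collapse the numerator with Binet's formulas. The only difference is presentational (you phrase the combination via the quotients $D_{s,t+2}/P_{\alpha }$ and $D_{s,t+2}/P_{\beta }$, while the paper factors out the common product $\dprod _{j=1}^{t}$ first), so nothing further is needed.
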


\begin{proof}
(a) We begin by writing the left-hand side of (\ref{2.24}) (we write LHS$_{%
\ref{2.24}}$) as%
\begin{eqnarray*}
\text{LHS}_{\ref{2.24}} &=&\frac{\alpha ^{sk}\left( x,y\right) }{\alpha
^{s\left( t+1\right) }\left( x,y\right) \sum_{i=0}^{t+1}\left( -1\right) ^{%
\frac{\left( si+2(s+1)\right) \left( i+1\right) }{2}}\dbinom{t+1}{i}%
_{F\!_{s}\!\left( x,y\right) }y^{\frac{si\left( i-1\right) }{2}}\left( \frac{%
z}{\alpha ^{s}\left( x,y\right) }\right) ^{t+1-i}} \\
&&+\frac{\beta ^{sk}\left( x,y\right) }{\beta ^{s\left( t+1\right) }\left(
x,y\right) \sum_{i=0}^{t+1}\left( -1\right) ^{\frac{\left( si+2(s+1)\right)
\left( i+1\right) }{2}}\dbinom{t+1}{i}_{F\!_{s}\!\left( x,y\right) }y^{\frac{%
si\left( i-1\right) }{2}}\left( \frac{z}{\beta ^{s}\left( x,y\right) }%
\right) ^{t+1-i}},
\end{eqnarray*}%
or, by using (\ref{2.16})%
\begin{eqnarray*}
&&\text{LHS}_{\ref{2.24}} \\
&=&\frac{\left( -1\right) ^{s+1}\alpha ^{sk}\left( x,y\right) }{\alpha
^{s\left( t+1\right) }\left( x,y\right) \dprod\limits_{j=0}^{t}\left( \frac{z%
}{\alpha ^{s}\left( x,y\right) }-\alpha ^{sj}\left( x,y\right) \beta
^{s\left( t-j\right) }\left( x,y\right) \right) } \\
&&+\frac{\left( -1\right) ^{s+1}\beta ^{sk}\left( x,y\right) }{\beta
^{s\left( t+1\right) }\left( x,y\right) \dprod\limits_{j=0}^{t}\left( \frac{z%
}{\beta ^{s}\left( x,y\right) }-\alpha ^{sj}\left( x,y\right) \beta
^{s\left( t-j\right) }\left( x,y\right) \right) } \\
&=&\!\!\frac{\left( -1\right) ^{s+1}\alpha ^{sk}\left( x,y\right) }{%
\dprod\limits_{j=0}^{t}\left( z-\alpha ^{s\left( j+1\right) }\left(
x,y\right) \beta ^{s\left( t-j\right) }\left( x,y\right) \right) }+\frac{%
\left( -1\right) ^{s+1}\beta ^{sk}\left( x,y\right) }{\dprod%
\limits_{j=0}^{t}\left( z-\alpha ^{sj}\left( x,y\right) \beta ^{s\left(
t+1-j\right) }\left( x,y\right) \right) } \\
&=&\frac{\left( -1\right) ^{s+1}\alpha ^{sk}\left( x,y\right) }{%
\dprod\limits_{j=1}^{t+1}\left( z-\alpha ^{sj}\left( x,y\right) \beta
^{s\left( t+1-j\right) }\left( x,y\right) \right) }+\frac{\left( -1\right)
^{s+1}\beta ^{sk}\left( x,y\right) }{\dprod\limits_{j=0}^{t}\left( z-\alpha
^{sj}\left( x,y\right) \beta ^{s\left( t+1-j\right) }\left( x,y\right)
\right) }.
\end{eqnarray*}%
Some further algebraic manipulation gives us%
\begin{eqnarray*}
&&\text{LHS}_{\ref{2.24}} \\
&=&\frac{\left( -1\right) ^{s+1}}{\dprod\limits_{j=1}^{t}\left( z-\alpha
^{sj}\left( x,y\right) \beta ^{s\left( t+1-j\right) }\left( x,y\right)
\right) }\left( \frac{\alpha ^{sk}\left( x,y\right) }{z-\alpha ^{s\left(
t+1\right) }\left( x,y\right) }+\frac{\beta ^{sk}\left( x,y\right) }{z-\beta
^{s\left( t+1\right) }\left( x,y\right) }\right) \\
&=&\frac{\alpha ^{sk}\left( x,y\right) \left( z-\beta ^{s\left( t+1\right)
}\left( x,y\right) \right) +\beta ^{sk}\left( x,y\right) \left( z-\alpha
^{s\left( t+1\right) }\left( x,y\right) \right) }{\left( -1\right)
^{s+1}\dprod\limits_{j=0}^{t+1}\left( z-\alpha ^{sj}\left( x,y\right) \beta
^{s\left( t+1-j\right) }\left( x,y\right) \right) } \\
&=&\frac{L_{sk}\left( x,y\right) z-\left( -y\right) ^{sk}L_{s\left(
t-k+1\right) }\left( x,y\right) }{\sum_{i=0}^{t+2}\left( -1\right) ^{\frac{%
\left( si+2(s+1)\right) \left( i+1\right) }{2}}\dbinom{t+2}{i}%
_{F\!_{s}\!\left( x,y\right) }y^{\frac{i\left( i-1\right) }{2}}z^{t+2-i}},
\end{eqnarray*}%
as wanted.

(b) We write the left-hand side of (\ref{2.25}) (LHS$_{\ref{2.25}}$) as%
\begin{eqnarray*}
\text{LHS}_{\ref{2.25}} &=&\frac{\alpha ^{m+sk}\left( x,y\right) }{\alpha
^{s\left( t+1\right) }\left( x,y\right) \sum_{i=0}^{t+1}\left( -1\right) ^{%
\frac{\left( si+2(s+1)\right) \left( i+1\right) }{2}}\dbinom{t+1}{i}%
_{F\!_{s}\!\left( x,y\right) }y^{\frac{i\left( i-1\right) }{2}}\left( \frac{z%
}{\alpha ^{s}\left( x,y\right) }\right) ^{t+1-i}} \\
&&-\frac{\beta ^{m+sk}\left( x,y\right) }{\beta ^{s\left( t+1\right) }\left(
x,y\right) \sum_{i=0}^{t+1}\left( -1\right) ^{\frac{\left( si+2(s+1)\right)
\left( i+1\right) }{2}}\dbinom{t+1}{i}_{F\!_{s}\!\left( x,y\right) }y^{\frac{%
i\left( i-1\right) }{2}}\left( \frac{z}{\beta ^{s}\left( x,y\right) }\right)
^{t+1-i}},
\end{eqnarray*}%
and use (\ref{2.16}) to write%
\begin{eqnarray*}
&&\text{LHS}_{\ref{2.25}} \\
&=&\frac{\alpha ^{m+sk}\left( x,y\right) }{\alpha ^{s\left( t+1\right)
}\left( x,y\right) \left( -1\right) ^{s+1}\dprod\limits_{j=0}^{t}\left( 
\frac{z}{\alpha ^{s}\left( x,y\right) }-\alpha ^{sj}\left( x,y\right) \beta
^{s\left( t-j\right) }\left( x,y\right) \right) } \\
&&-\frac{\beta ^{m+sk}\left( x,y\right) }{\beta ^{s\left( t+1\right) }\left(
x,y\right) \left( -1\right) ^{s+1}\dprod\limits_{j=0}^{t}\left( \frac{z}{%
\beta ^{s}\left( x,y\right) }-\alpha ^{sj}\left( x,y\right) \beta ^{s\left(
t-j\right) }\left( x,y\right) \right) } \\
&=&\frac{\alpha ^{m+sk}\left( x,y\right) }{\left( -1\right)
^{s+1}\dprod\limits_{j=0}^{t}\left( z-\alpha ^{s\left( j+1\right) }\left(
x,y\right) \beta ^{s\left( t-j\right) }\left( x,y\right) \right) }-\frac{%
\beta ^{m+sk}\left( x,y\right) }{\left( -1\right)
^{s+1}\dprod\limits_{j=0}^{t}\left( z-\alpha ^{sj}\left( x,y\right) \beta
^{s\left( t+1-j\right) }\left( x,y\right) \right) }.
\end{eqnarray*}%
Some further simplifications give us%
\begin{eqnarray*}
&&\text{LHS}_{\ref{2.25}} \\
&=&\frac{1}{\left( -1\right) ^{s+1}\dprod\limits_{j=1}^{t}\left( z-\alpha
^{sj}\left( x,y\right) \beta ^{s\left( t+1-j\right) }\left( x,y\right)
\right) }\left( \frac{\alpha ^{m+sk}\left( x,y\right) }{z-\alpha ^{s\left(
t+1\right) }\left( x,y\right) }-\frac{\beta ^{m+sk}\left( x,y\right) }{%
z-\beta ^{s\left( t+1\right) }\left( x,y\right) }\right) \\
&=&\frac{\alpha ^{m+sk}\left( x,y\right) \left( z-\beta ^{s\left( t+1\right)
}\left( x,y\right) \right) -\beta ^{m+sk}\left( x,y\right) \left( z-\alpha
^{s\left( t+1\right) }\left( x,y\right) \right) }{\left( -1\right)
^{s+1}\dprod\limits_{j=0}^{t+1}\left( z-\alpha ^{sj}\left( x,y\right) \beta
^{s\left( t+1-j\right) }\left( x,y\right) \right) } \\
&=&\frac{\sqrt{x^{2}+4y}F_{sk+m}\left( x,y\right) z+\beta ^{m+sk}\left(
x,y\right) \alpha ^{m+sk}\left( x,y\right) \left( \alpha ^{s\left(
t-k+1\right) -m}\left( x,y\right) -\beta ^{s\left( t-k+1\right) -m}\left(
x,y\right) \right) }{\sum_{i=0}^{t+2}\left( -1\right) ^{\frac{\left(
si+2(s+1)\right) \left( i+1\right) }{2}}\dbinom{t+2}{i}_{F\!_{s}\!\left(
x,y\right) }y^{\frac{i\left( i-1\right) }{2}}z^{t+2-i}} \\
&=&\frac{\sqrt{x^{2}+4y}\left( F_{sk+m}\left( x,y\right) z+\left( -y\right)
^{sk+m}F_{s\left( t-k+1\right) -m}\left( x,y\right) \right) }{%
\sum_{i=0}^{t+2}\left( -1\right) ^{\frac{\left( si+2(s+1)\right) \left(
i+1\right) }{2}}\dbinom{t+2}{i}_{F\!_{s}\!\left( x,y\right) }y^{\frac{%
i\left( i-1\right) }{2}}z^{t+2-i}},
\end{eqnarray*}%
as wanted.
\end{proof}

\begin{lemma}
\label{Lemma2.3}Let $t,i\in \mathbb{N}^{\prime }$ be given. The following
identity holds 
\begin{eqnarray}
F_{s\left( t+2\right) }\left( x,y\right) F_{s\left( t+1\right) }\left(
x,y\right) &=&\left( -y\right) ^{si}F_{s\left( t+2-i\right) }\left(
x,y\right) F_{s\left( t+1-i\right) }\left( x,y\right)  \label{2.26} \\
&&+L_{s\left( t+1\right) }\left( x,y\right) F_{s\left( t+2-i\right) }\left(
x,y\right) F_{si}\left( x,y\right)  \notag \\
&&+\left( -y\right) ^{s\left( t-i+2\right) }F_{si}\left( x,y\right)
F_{s\left( i-1\right) }\left( x,y\right) .  \notag
\end{eqnarray}
\end{lemma}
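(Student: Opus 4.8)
The plan is to collapse the three–term identity (\ref{2.26}) into a single application of the index–reduction formula (\ref{1.11}), after one preliminary regrouping. First I would record the product identity
\begin{equation*}
L_{s\left( t+1\right) }\left( x,y\right) F_{si}\left( x,y\right) =F_{s\left( t+1+i\right) }\left( x,y\right) -\left( -y\right) ^{si}F_{s\left( t+1-i\right) }\left( x,y\right) ,
\end{equation*}
which generalizes (\ref{1.8}) to an arbitrary ``multiplier'' $m$ (here $n=t+1$, $m=i$; identity (\ref{1.8}) is its $m=1$ case after rearrangement), and which follows at once from Binet's formulas (\ref{1.2}) together with $\alpha \left( x,y\right) \beta \left( x,y\right) =-y$. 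It holds for all $t,i\in \mathbb{N}^{\prime }$ by the convention on negative indices (so the borderline value $i=0$, where $F_{s\left( i-1\right) }\left( x,y\right) =F_{-s}\left( x,y\right) $, is automatically included).

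Next I would combine the two terms on the right-hand side of (\ref{2.26}) that share the factor $F_{s\left( t+2-i\right) }\left( x,y\right) $; their sum is
\begin{equation*}
F_{s\left( t+2-i\right) }\left( x,y\right) \left( \left( -y\right) ^{si}F_{s\left( t+1-i\right) }\left( x,y\right) +L_{s\left( t+1\right) }\left( x,y\right) F_{si}\left( x,y\right) \right) =F_{s\left( t+2-i\right) }\left( x,y\right) F_{s\left( t+1+i\right) }\left( x,y\right)
\end{equation*}
by the product identity just recorded. Hence proving (\ref{2.26}) is equivalent to proving
\begin{equation*}
F_{s\left( t+2\right) }\left( x,y\right) F_{s\left( t+1\right) }\left( x,y\right) -F_{s\left( t+1+i\right) }\left( x,y\right) F_{s\left( t+2-i\right) }\left( x,y\right) =\left( -y\right) ^{s\left( t-i+2\right) }F_{si}\left( x,y\right) F_{s\left( i-1\right) }\left( x,y\right) .
\end{equation*}

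This last identity is precisely (\ref{1.11}) under the substitution $M=s\left( t+2\right) $, $N=s\left( t+1\right) $, $K=s\left( i-1\right) $: then $M+K=s\left( t+1+i\right) $, $N-K=s\left( t+2-i\right) $ and $M+K-N=si$, so the right-hand side of (\ref{1.11}) becomes $\left( -y\right) ^{s\left( t+2-i\right) }F_{si}\left( x,y\right) F_{s\left( i-1\right) }\left( x,y\right) $, which matches since $s\left( t+2-i\right) =s\left( t-i+2\right) $. I do not anticipate a genuine obstacle; the only care required is bookkeeping of indices, in particular verifying $M+K-N=si$ and that the $\left( -y\right) $-exponent produced by (\ref{1.11}) equals $s\left( t-i+2\right) $. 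As a fully routine alternative one can bypass both auxiliary identities and expand all four products directly via Binet's formulas (\ref{1.2}): after clearing the common factor $x^{2}+4y=\left( \alpha \left( x,y\right) -\beta \left( x,y\right) \right) ^{2}$ and using $\alpha \left( x,y\right) \beta \left( x,y\right) =-y$, both sides reduce after cancellation to $\left( \alpha ^{s\left( t+2\right) }\left( x,y\right) -\beta ^{s\left( t+2\right) }\left( x,y\right) \right) \left( \alpha ^{s\left( t+1\right) }\left( x,y\right) -\beta ^{s\left( t+1\right) }\left( x,y\right) \right) /\left( x^{2}+4y\right) $; this uses no earlier results, only patience with the expansion.
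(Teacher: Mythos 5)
Your proposal is correct and follows essentially the same route as the paper: establish $\left( -y\right)^{si}F_{s\left( t+1-i\right) }\left( x,y\right) +L_{s\left( t+1\right) }\left( x,y\right) F_{si}\left( x,y\right) =F_{s\left( t+1+i\right) }\left( x,y\right)$ by Binet's formulas, use it to rewrite the right-hand side of (\ref{2.26}) as $F_{s\left( t+2-i\right) }\left( x,y\right) F_{s\left( t+1+i\right) }\left( x,y\right) +\left( -y\right)^{s\left( t+2-i\right) }F_{si}\left( x,y\right) F_{s\left( i-1\right) }\left( x,y\right)$, and finish with the index-reduction formula (\ref{1.11}). Your explicit choice $M=s\left( t+2\right)$, $N=s\left( t+1\right)$, $K=s\left( i-1\right)$ is exactly the instantiation the paper leaves implicit, and your index bookkeeping checks out.
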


\begin{proof}
Use Binet's formulas to prove that 
\begin{equation*}
\left( -y\right) ^{si}F_{s\left( t+1-i\right) }\left( x,y\right) +L_{s\left(
t+1\right) }\left( x,y\right) F_{si}\left( x,y\right) =F_{s\left(
t+1+i\right) }\left( x,y\right) ,
\end{equation*}%
then write the right-hand side of (\ref{2.26}) as 
\begin{equation*}
F_{s\left( t+2-i\right) }\left( x,y\right) F_{s\left( t+1+i\right) }\left(
x,y\right) +\left( -y\right) ^{s\left( t+2-i\right) }F_{si}\left( x,y\right)
F_{s\left( i-1\right) }\left( x,y\right) .
\end{equation*}

Now use (1.11) to obtain (\ref{2.26}).
\end{proof}

\begin{proposition}
\label{Prop2.4}\textit{Let }$t\in \mathbb{N}^{\prime }$\textit{\ be given.
Then}%
\begin{eqnarray}
&&\sum_{i=0}^{t+2}\left( -1\right) ^{\frac{\left( si+2(s+1)\right) \left(
i+1\right) }{2}}\dbinom{t+2}{i}_{F\!_{s}\!\left( x,y\right) }y^{\frac{%
si\left( i-1\right) }{2}}z^{t+2-i}  \label{2.27} \\
&=&\left( z^{2}-L_{s\left( t+1\right) }\left( x,y\right) z+\left( -y\right)
^{s\left( t+1\right) }\right) \sum_{i=0}^{t}\left( -1\right) ^{\frac{\left(
si+2(s+1)\right) \left( i+1\right) }{2}}\dbinom{t}{i}_{F\!_{s}\!\left(
x,y\right) }\left( -1\right) ^{si}y^{\frac{si\left( i+1\right) }{2}}z^{t-i}.
\notag
\end{eqnarray}
\end{proposition}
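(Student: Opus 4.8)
The plan is to identify the left-hand side of (\ref{2.27}) with the polynomial $D_{s,t+2}(x,y;z)$ introduced after Proposition \ref{Prop2.1}, and then to read off the claimed factorization by peeling two linear factors off the product representation of $D_{s,t+2}$. By the very definition of $D_{s,k+1}$, the left-hand side of (\ref{2.27}) is exactly $D_{s,t+2}(x,y;z)$, so by (\ref{2.16}) it equals $(-1)^{s+1}\prod_{j=0}^{t+1}\left(z-\alpha^{sj}(x,y)\beta^{s(t+1-j)}(x,y)\right)$. First I would isolate the factors coming from $j=0$ and $j=t+1$, namely $z-\beta^{s(t+1)}(x,y)$ and $z-\alpha^{s(t+1)}(x,y)$; using $\alpha^{n}+\beta^{n}=L_{n}$ and $\alpha\beta=-y$, their product is $z^{2}-L_{s(t+1)}(x,y)z+(-y)^{s(t+1)}$, which is precisely the quadratic factor on the right of (\ref{2.27}).

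It then remains to show that the leftover product $\prod_{j=1}^{t}\left(z-\alpha^{sj}(x,y)\beta^{s(t+1-j)}(x,y)\right)$ equals $(-y)^{st}(-1)^{s+1}$ times the second (sum) factor on the right of (\ref{2.27}). For this I would shift $j\mapsto j+1$ and pull $\alpha^{s}\beta^{s}=(-y)^{s}$ out of each term, rewriting the product as $\prod_{j=0}^{t-1}\left(z-(-y)^{s}\alpha^{sj}(x,y)\beta^{s(t-1-j)}(x,y)\right)=(-y)^{st}\prod_{j=0}^{t-1}\left(\tfrac{z}{(-y)^{s}}-\alpha^{sj}(x,y)\beta^{s(t-1-j)}(x,y)\right)$. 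By (\ref{2.16}) with $t$ in place of $t+2$, the last product is $(-1)^{s+1}D_{s,t}\!\left(x,y;\tfrac{z}{(-y)^{s}}\right)$. Expanding $D_{s,t}$ by its sum form and absorbing the factor $(-y)^{st}$, the coefficient of $z^{t-i}$ changes from $y^{\frac{si(i-1)}{2}}$ to $(-y)^{si}y^{\frac{si(i-1)}{2}}=(-1)^{si}y^{\frac{si(i+1)}{2}}$, using only $\tfrac{si(i-1)}{2}+si=\tfrac{si(i+1)}{2}$; this is exactly the $i$-th coefficient of $\sum_{i=0}^{t}(-1)^{\frac{(si+2(s+1))(i+1)}{2}}\dbinom{t}{i}_{F_{s}(x,y)}(-1)^{si}y^{\frac{si(i+1)}{2}}z^{t-i}$. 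Reassembling (the two $(-1)^{s+1}$ factors cancel) yields (\ref{2.27}).

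Every step is routine manipulation; the one place needing genuine care is the bookkeeping of the sign $(-1)^{s+1}$ and of the $y$-exponents through the reindexing, so that the power $y^{\frac{si(i-1)}{2}}$ attached to $\dbinom{t}{i}_{F_{s}(x,y)}$ in $D_{s,t}$ turns into precisely $(-1)^{si}y^{\frac{si(i+1)}{2}}$ after rescaling $z$ by $(-y)^{s}$. Incidentally, Lemma \ref{Lemma2.3} provides an alternative, purely computational route: expand the right-hand side of (\ref{2.27}), read off the coefficient of $z^{t+2-i}$ as the combination of $\dbinom{t}{i-2}_{F_{s}(x,y)}$, $\dbinom{t}{i-1}_{F_{s}(x,y)}$ and $\dbinom{t}{i}_{F_{s}(x,y)}$ weighted by $1$, $-L_{s(t+1)}(x,y)$ and $(-y)^{s(t+1)}$, clear the Fibopolynomial denominators, and recognize the resulting identity among products of bivariate Fibonacci polynomials as (\ref{2.26}). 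The product route above is shorter, however, and sidesteps the sign analysis of the three factors $(-1)^{\frac{(sj+2(s+1))(j+1)}{2}}$ for $j=i-2,i-1,i$.
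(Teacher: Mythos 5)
Your proof is correct, but it follows a genuinely different route from the paper's. The paper attacks the right-hand side of (\ref{2.27}) head on: it multiplies the quadratic into the sum, splits the result into three sums, shifts indices, writes the three coefficients over the common factor $F_{s\left( t+2\right) }\left( x,y\right) F_{s\left( t+1\right) }\left( x,y\right) $, and then invokes Lemma \ref{Lemma2.3} (identity (\ref{2.26})) to collapse the bracket --- which is exactly the ``alternative, purely computational route'' you sketch at the end. Your main argument instead works on the left-hand side through the factorization (\ref{2.16}): you identify it with $D_{s,t+2}\left( x,y;z\right) $, peel off the roots $\alpha ^{s\left( t+1\right) }\left( x,y\right) $ and $\beta ^{s\left( t+1\right) }\left( x,y\right) $ to produce the quadratic $z^{2}-L_{s\left( t+1\right) }\left( x,y\right) z+\left( -y\right) ^{s\left( t+1\right) }$, and recognize the residual product, after the shift $j\mapsto j+1$ and the rescaling $z\mapsto z/\left( -y\right) ^{s}$, as a rescaled copy of $D_{s,t}$; absorbing $\left( -y\right) ^{st}$ turns $y^{\frac{si\left( i-1\right) }{2}}$ into $\left( -1\right) ^{si}y^{\frac{si\left( i+1\right) }{2}}$, which is precisely the modified coefficient in (\ref{2.27}). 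What this buys is independence from Lemma \ref{Lemma2.3} and from the sign analysis of the three exponents $\frac{\left( sj+2(s+1)\right) \left( j+1\right) }{2}$, $j=i-2,i-1,i$, at the cost of leaning on Proposition \ref{Prop2.1}; since Proposition \ref{Prop2.1} is proved independently (induction plus Binet), nothing circular is involved, and your manipulation is in the same spirit as what the paper itself does later with $D_{s,p+3}$ and $D_{s,p+5}$ in Corollary \ref{Cor4.8}.

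Two small points of hygiene. First, your interim sentence asserting that the leftover product equals $\left( -y\right) ^{st}\left( -1\right) ^{s+1}$ times the second sum factor of (\ref{2.27}) is off by the factor $\left( -y\right) ^{st}$ if read literally: that factor is exactly what gets absorbed into the coefficients, and your subsequent computation handles this correctly, yielding leftover product $=\left( -1\right) ^{s+1}\times $(sum factor), after which the two $\left( -1\right) ^{s+1}$ indeed cancel. Second, for $t=0$ the residual product is empty, so the appeal to (\ref{2.16}) ``with $t$ in place of $t+2$'' falls outside the stated range $k\in \mathbb{N}^{\prime }$; there the sum factor is the single term $\left( -1\right) ^{s+1}$ and the identity is immediate, so it suffices to note this degenerate case separately (or adopt the empty-product convention $D_{s,0}=\left( -1\right) ^{s+1}$).
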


\begin{proof}
We have%
\begin{eqnarray*}
&&\left( z^{2}-L_{s\left( t+1\right) }\left( x,y\right) z+\left( -y\right)
^{s\left( t+1\right) }\right) \sum_{i=0}^{t}\left( -1\right) ^{\frac{\left(
si+2(s+1)\right) \left( i+1\right) }{2}}\dbinom{t}{i}_{F\!_{s}\!\left(
x,y\right) }\left( -1\right) ^{si}y^{\frac{si\left( i+1\right) }{2}}z^{t-i}
\\
&=&\sum_{i=0}^{t}\left( -1\right) ^{\frac{\left( si+2(s+1)\right) \left(
i+1\right) }{2}}\dbinom{t}{i}_{F\!_{s}\!\left( x,y\right) }\left( -1\right)
^{si}y^{\frac{si\left( i+1\right) }{2}}z^{t+2-i} \\
&&-L_{s\left( t+1\right) }\left( x,y\right) \sum_{i=1}^{t+1}\left( -1\right)
^{\frac{\left( si+s+2\right) i}{2}}\dbinom{t}{i-1}_{F\!_{s}\!\left(
x,y\right) }\left( -1\right) ^{s\left( i-1\right) }y^{\frac{si\left(
i-1\right) }{2}}z^{t+2-i} \\
&&+\left( -y\right) ^{s\left( t+1\right) }\sum_{i=2}^{t+2}\left( -1\right) ^{%
\frac{\left( si+2)\right) \left( i-1\right) }{2}}\dbinom{t}{i-2}%
_{F\!_{s}\!\left( x,y\right) }\left( -1\right) ^{si}y^{\frac{s\left(
i-2\right) \left( i-1\right) }{2}}z^{t+2-i} \\
&=&\sum_{i=0}^{t+2}\left( -1\right) ^{\frac{\left( si+2(s+1)\right) \left(
i+1\right) }{2}}\dbinom{t+2}{i}_{F\!_{s}\!\left( x,y\right) }\frac{1}{%
F_{s\left( t+2\right) }\left( x,y\right) F_{s\left( t+1\right) }\left(
x,y\right) } \\
&&\times \left( 
\begin{array}{c}
\left( -y\right) ^{si}F_{s\left( t+2-i\right) }\left( x,y\right) F_{s\left(
t+1-i\right) }\left( x,y\right) \\ 
+L_{s\left( t+1\right) }\left( x,y\right) F_{s\left( t+2-i\right) }\left(
x,y\right) F_{si}\left( x,y\right) \\ 
+\left( -y\right) ^{s\left( t-i+2\right) }F_{si}\left( x,y\right) F_{s\left(
i-1\right) }\left( x,y\right)%
\end{array}%
\right) y^{\frac{si\left( i-1\right) }{2}}z^{t+2-i}.
\end{eqnarray*}

Finally use lemma \ref{Lemma2.3} to obtain (\ref{2.27}).
\end{proof}

\begin{proposition}
\label{Prop2.5}Let $i,t\in \mathbb{N}^{\prime }$ and $m\in \mathbb{Z}$ be
given. The following identities hold

(a)%
\begin{eqnarray}
&&\sum_{j=0}^{i}\left( -1\right) ^{\frac{\left( sj+2\left( s+1\right)
\right) \left( j+1\right) }{2}}\dbinom{t+1}{j}_{F\!_{s}\!\left( x,y\right)
}F_{ts\left( i-j\right) +m}\left( x,y\right) y^{\frac{sj\left( j-1\right) }{2%
}}  \label{2.28} \\
&=&\left( -1\right) ^{s+i+1+\frac{si\left( i+1\right) }{2}}\dbinom{t}{i}%
_{F\!_{s}\!\left( x,y\right) }F_{m-is}\left( x,y\right) y^{\frac{si\left(
i+1\right) }{2}}.  \notag
\end{eqnarray}
\ 

(b)%
\begin{eqnarray}
&&\sum_{j=0}^{i}\left( -1\right) ^{\frac{\left( sj+2\left( s+1\right)
\right) \left( j+1\right) }{2}}\dbinom{t+1}{j}_{F\!_{s}\!\left( x,y\right)
}L_{ts\left( i-j\right) +m}\left( x,y\right) y^{\frac{sj\left( j-1\right) }{2%
}}  \label{2.29} \\
&=&\left( -1\right) ^{s+i+1+\frac{si\left( i+1\right) }{2}}\dbinom{t}{i}%
_{F\!_{s}\!\left( x,y\right) }L_{m-is}\left( x,y\right) y^{\frac{si\left(
i+1\right) }{2}}.  \notag
\end{eqnarray}
\end{proposition}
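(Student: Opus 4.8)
The plan is to prove (a) and (b) in one stroke by regarding each side of (\ref{2.28}) (resp. (\ref{2.29})) as the general term of a sequence indexed by $i\in\mathbb{N}^{\prime}$, computing its $\mathcal{Z}$-transform, and checking that the two transforms coincide; since $\mathcal{Z}$ is injective, the claimed term-by-term identity follows. Throughout I suppress the arguments $(x,y)$.

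For the left-hand side of (\ref{2.28}), I view the sum as the convolution (in the index $i$) of the finite sequence $a_{j}=(-1)^{\frac{(sj+2(s+1))(j+1)}{2}}\binom{t+1}{j}_{F_{s}}y^{\frac{sj(j-1)}{2}}$, $j\ge 0$, with the sequence $b_{k}=F_{tsk+m}$, $k\ge 0$. By the convolution theorem (\ref{2.3}) its transform is $\mathcal{Z}(a_{j})\,\mathcal{Z}(b_{k})$. Since $\binom{t+1}{j}_{F_{s}}=0$ for $j>t+1$, Proposition~\ref{Prop2.1} (applied with its $k$ equal to $t$) identifies $\mathcal{Z}(a_{j})=\sum_{j\ge 0}a_{j}z^{-j}$ with $z^{-(t+1)}D_{s,t+1}\left( x,y;z\right)$, while formula (\ref{2.8}) with $s$ replaced by $ts$ gives $\mathcal{Z}(b_{k})=\frac{z\left( F_{m}z+(-y)^{m}F_{ts-m}\right)}{z^{2}-L_{ts}z+(-y)^{ts}}$. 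Hence the transform of the left side of (\ref{2.28}) is $z^{-t}D_{s,t+1}\left( x,y;z\right)\dfrac{F_{m}z+(-y)^{m}F_{ts-m}}{z^{2}-L_{ts}z+(-y)^{ts}}$; the same argument with (\ref{2.9}) in place of (\ref{2.8}) shows the transform of the left side of (\ref{2.29}) is $z^{-t}D_{s,t+1}\left( x,y;z\right)\dfrac{L_{m}z-(-y)^{m}L_{ts-m}}{z^{2}-L_{ts}z+(-y)^{ts}}$.

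For the right-hand side of (\ref{2.28}), I first record the bookkeeping identity $(-1)^{s+i+1+\frac{si(i+1)}{2}}y^{\frac{si(i+1)}{2}}\binom{t}{i}_{F_{s}}=(-y)^{si}c_{t,i}$, where $c_{t,i}=(-1)^{\frac{(si+2(s+1))(i+1)}{2}}\binom{t}{i}_{F_{s}}y^{\frac{si(i-1)}{2}}$ is exactly the coefficient of $z^{t-i}$ in $D_{s,t}\left( x,y;z\right)$ by Proposition~\ref{Prop2.1}. Writing $F_{m-is}=\frac{1}{\sqrt{x^{2}+4y}}\big(\alpha^{m}(\alpha^{-s})^{i}-\beta^{m}(\beta^{-s})^{i}\big)$ by Binet and using the multiplication rule (\ref{2.2}), the transform of the right side of (\ref{2.28}) becomes $\frac{1}{\sqrt{x^{2}+4y}}\big(\alpha^{m}A(\alpha^{s}z)-\beta^{m}A(\beta^{s}z)\big)$, where $A(z)=\mathcal{Z}\big((-y)^{si}c_{t,i}\big)$ is obtained from the generating polynomial $\sum_{i}c_{t,i}z^{t-i}=D_{s,t}\left( x,y;z\right)$ by the rescaling in (\ref{2.2}). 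Using the factored form $D_{s,t}\left( x,y;z\right)=(-1)^{s+1}\prod_{j=0}^{t-1}\big(z-\alpha^{sj}\beta^{s(t-1-j)}\big)$ from Proposition~\ref{Prop2.1}, one pulls $\alpha^{s}$ (resp.\ $\beta^{s}$) out of each factor, shifts the product index, and recognizes $(-1)^{s+1}\prod_{j=0}^{t-1}\big(z-\alpha^{sj}\beta^{s(t-j)}\big)$ as $D_{s,t+1}\left( x,y;z\right)/(z-\alpha^{st})$, i.e.\ the length-$(t+1)$ product defining $D_{s,t+1}$ with its single factor $z-\alpha^{st}$ (resp.\ $z-\beta^{st}$) divided out. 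This yields $A(\alpha^{s}z)=z^{-t}D_{s,t+1}\left( x,y;z\right)/(z-\alpha^{st})$ and $A(\beta^{s}z)=z^{-t}D_{s,t+1}\left( x,y;z\right)/(z-\beta^{st})$. Putting these over the common denominator $(z-\alpha^{st})(z-\beta^{st})=z^{2}-L_{ts}z+(-y)^{ts}$ and simplifying the numerator via $\alpha^{m}(z-\beta^{st})-\beta^{m}(z-\alpha^{st})=\sqrt{x^{2}+4y}\,\big(F_{m}z+(-y)^{m}F_{ts-m}\big)$ (immediate from Binet's formulas and $\alpha\beta=-y$) reproduces exactly the expression found for the left side, proving (a). Part (b) is identical, with $L_{m-is}=\alpha^{m-is}+\beta^{m-is}$ replacing $F_{m-is}$, which turns the ``$-$'' into ``$+$'' and produces the numerator $\alpha^{m}(z-\beta^{st})+\beta^{m}(z-\alpha^{st})=L_{m}z-(-y)^{m}L_{ts-m}$, matching the Lucas transform above.

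Since $\mathcal{Z}$ is injective, the equalities of $\mathcal{Z}$-transforms force (\ref{2.28}) and (\ref{2.29}). The only real friction I expect is routine: keeping the sign exponents and the powers of $y$ straight in the identification $(-1)^{s+i+1+\frac{si(i+1)}{2}}y^{\frac{si(i+1)}{2}}\binom{t}{i}_{F_{s}}=(-y)^{si}c_{t,i}$, and correctly tracking which of the roots $\alpha^{st},\beta^{st}$ is adjoined when the rescaled $D_{s,t}$-product is completed to the $D_{s,t+1}$-product. (An alternative is a direct induction on $i$: the base $i=0$ is immediate from $\binom{t+1}{0}_{F_{s}}=\binom{t}{0}_{F_{s}}=1$, and the step can be run with the three-term relation $F_{a+ts}=L_{ts}F_{a}-(-y)^{ts}F_{a-ts}$; but the index juggling this forces makes the $\mathcal{Z}$-transform route the cleaner one.)
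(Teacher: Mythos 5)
Your proof is correct, but it takes a genuinely different route from the paper's. The paper proves Proposition \ref{Prop2.5} by a direct induction on $i$: the base case $i=0$ is immediate, and the inductive step splits off the top term, reindexes, and closes with the index-reduction identity (\ref{1.11}) (resp.\ (\ref{1.12}) for part (b)). You instead fix $t$, read the left-hand side as the convolution of the finitely supported coefficient sequence of $D_{s,t+1}$ with $F_{tsn+m}$ (resp.\ $L_{tsn+m}$), and compare $Z$ transforms. I checked the pivotal steps and they are sound: the bookkeeping identity $(-1)^{s+i+1+\frac{si(i+1)}{2}}y^{\frac{si(i+1)}{2}}\binom{t}{i}_{F_s(x,y)}=(-y)^{si}c_{t,i}$ holds because the two sign exponents differ by $2si$; your identifications $A(\alpha^{s}z)=z^{-t}D_{s,t+1}(x,y;z)/(z-\alpha^{st}(x,y))$ and $A(\beta^{s}z)=z^{-t}D_{s,t+1}(x,y;z)/(z-\beta^{st}(x,y))$ are the correct assignments of the adjoined roots (the $A(\alpha^{s}z)$ substitution routes through $D_{s,t}(x,y;z/\beta^{s}(x,y))$, whose missing factor is indeed $z-\alpha^{st}(x,y)$); and both sides then transform to $z^{-t}D_{s,t+1}(x,y;z)\bigl(F_{m}(x,y)z+(-y)^{m}F_{ts-m}(x,y)\bigr)/\bigl(z^{2}-L_{ts}(x,y)z+(-y)^{ts}\bigr)$, so injectivity of $\mathcal{Z}$ finishes the argument, with the Lucas case differing only in the sign of the combination and the numerator $L_{m}(x,y)z-(-y)^{m}L_{ts-m}(x,y)$. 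What each approach buys: the paper's induction is elementary and self-contained, needing only (\ref{1.11})/(\ref{1.12}) and no analytic apparatus, which is precisely the simplification the author advertises over \cite{Pi2}; your transform argument treats (a) and (b) uniformly and explains the identity structurally (multiplying the series $\mathcal{Z}(F_{tsn+m}(x,y))$ by the polynomial $D_{s,t+1}$ collapses it to finitely many terms), but it leans on the convention $\binom{t+1}{j}_{F_s(x,y)}=0$ for $j>t+1$ (and $\binom{t}{i}_{F_s(x,y)}=0$ for $i>t$) so that the finite sums really are convolutions, on regions of convergence and injectivity of $\mathcal{Z}$, and on computations that largely parallel Proposition \ref{Prop2.2}, so it re-derives machinery the paper already deploys elsewhere.
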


\begin{proof}
(a) We proceed by induction on $i$. For $i=0$ both sides are equal to $%
\left( -1\right) ^{s+1}F_{m}\left( x,y\right) $. Let us suppose the result
is true for a given $i\in \mathbb{N}$. Then 
\begin{eqnarray*}
&&\sum_{j=0}^{i+1}\left( -1\right) ^{\frac{\left( sj+2\left( s+1\right)
\right) \left( j+1\right) }{2}}\dbinom{t+1}{j}_{F\!_{s}\!\left( x,y\right)
}F_{ts\left( i+1-j\right) +m}\left( x,y\right) y^{\frac{sj\left( j-1\right) 
}{2}} \\
&=&\sum_{j=0}^{i}\left( -1\right) ^{\frac{\left( sj+2\left( s+1\right)
\right) \left( j+1\right) }{2}}\dbinom{t+1}{j}_{F\!_{s}\!\left( x,y\right)
}F_{ts\left( i-j\right) +m+ts}\left( x,y\right) y^{\frac{sj\left( j-1\right) 
}{2}} \\
&&+\left( -1\right) ^{\frac{\left( s\left( i+1\right) +2\left( s+1\right)
\right) \left( i+2\right) }{2}}\dbinom{t+1}{i+1}_{F\!_{s}\!\left( x,y\right)
}F_{m}\left( x,y\right) y^{\frac{si\left( i+1\right) }{2}} \\
&=&\left( -1\right) ^{i+s+1+\frac{si\left( i+1\right) }{2}}\dbinom{t}{i}%
_{F\!_{s}\!\left( x,y\right) }F_{m+ts-is}\left( x,y\right) y^{\frac{si\left(
i+1\right) }{2}} \\
&&+\left( -1\right) ^{\frac{\left( s\left( i+1\right) +2\left( s+1\right)
\right) \left( i+2\right) }{2}}\dbinom{t+1}{i+1}_{F\!_{s}\!\left( x,y\right)
}F_{m}\left( x,y\right) y^{\frac{si\left( i+1\right) }{2}} \\
&=&\left( -1\right) ^{i+s+2+\frac{s\left( i+1\right) \left( i+2\right) }{2}}%
\dbinom{t}{i+1}_{F\!_{s}\!\left( x,y\right) }y^{\frac{s\left( i+1\right)
\left( i+2\right) }{2}}\frac{\left( -y\right) ^{-s\left( i+1\right) }}{%
F_{s\left( t-i\right) }\left( x,y\right) }\left( 
\begin{array}{c}
-F_{m+ts-is}\left( x,y\right) F_{s\left( i+1\right) }\left( x,y\right) \\ 
+F_{m}\left( x,y\right) F_{s\left( t+1\right) }\left( x,y\right)%
\end{array}%
\right) \\
&=&\left( -1\right) ^{i+s+2+\frac{s\left( i+1\right) \left( i+2\right) }{2}}%
\dbinom{t}{i+1}_{F\!_{s}\!\left( x,y\right) }F_{m-\left( i+1\right) s}\left(
x,y\right) y^{\frac{s\left( i+1\right) \left( i+2\right) }{2}},
\end{eqnarray*}%
as wanted. In the last step we used (\ref{1.11}) in the form%
\begin{equation*}
F_{m}\left( x,y\right) F_{s\left( t+1\right) }\left( x,y\right)
-F_{m+ts-is}\left( x,y\right) F_{s\left( i+1\right) }\left( x,y\right)
=\left( -y\right) ^{s\left( i+1\right) }F_{s\left( t-i\right) }\left(
x,y\right) F_{m-\left( i+1\right) s}\left( x,y\right) .
\end{equation*}

(b) The proof of (\ref{2.29}) is similar, using (at the end of the
procedure) the identity%
\begin{equation*}
L_{m}\left( x,y\right) F_{s\left( t+1\right) }\left( x,y\right)
-L_{m+ts-is}\left( x,y\right) F_{s\left( i+1\right) }\left( x,y\right)
=\left( -y\right) ^{s\left( i+1\right) }F_{s\left( t-i\right) }\left(
x,y\right) L_{m-\left( i+1\right) s}\left( x,y\right) ,
\end{equation*}%
which is essentially (\ref{1.12}). We leave the details to the reader.
\end{proof}

\section{\label{Sec3}The main results}

We can write the sequence $F_{sn+m_{1}}^{k_{1}}\left( x,y\right)
F_{sn+m_{2}}^{k_{2}}\left( x,y\right) $ (where $m_{1},m_{2}\in \mathbb{Z}$
and $k_{1},k_{2}\in \mathbb{N}^{\prime }$ are given) as%
\begin{eqnarray*}
&&F_{sn+m_{1}}^{k_{1}}\left( x,y\right) F_{sn+m_{2}}^{k_{2}}\left( x,y\right)
\\
&=&\left( \frac{\alpha ^{sn+m_{1}}\left( x,y\right) -\beta ^{sn+m_{1}}\left(
x,y\right) }{\sqrt{x^{2}+4y}}\right) ^{k_{1}}\left( \frac{\alpha
^{sn+m_{2}}\left( x,y\right) -\beta ^{sn+m_{2}}\left( x,y\right) }{\sqrt{%
x^{2}+4y}}\right) ^{k_{2}} \\
&=&\left( x^{2}+4y\right) ^{-\frac{k_{1}+k_{2}}{2}}\sum_{i=0}^{k_{1}}\dbinom{%
k_{1}}{i}\left( \alpha ^{sn+m_{1}}\left( x,y\right) \right) ^{i}\left(
-\beta ^{sn+m_{1}}\left( x,y\right) \right) ^{k_{1}-i} \\
&&\times \sum_{j=0}^{k_{2}}\dbinom{k_{2}}{j}\left( \alpha ^{sn+m_{2}}\left(
x,y\right) \right) ^{j}\left( -\beta ^{sn+m_{2}}\left( x,y\right) \right)
^{k_{2}-j} \\
&=&\left( x^{2}+4y\right) ^{-\frac{k_{1}+k_{2}}{2}}\beta
^{m_{1}k_{1}+m_{2}k_{2}}\left( x,y\right)
\sum_{j=0}^{k_{1}+k_{2}}\sum_{i=0}^{k_{1}}\left( -1\right) ^{k_{1}+k_{2}-j}
\\
&&\times \dbinom{k_{1}}{i}\dbinom{k_{2}}{j-i}\left( \frac{\alpha \left(
x,y\right) }{\beta \left( x,y\right) }\right) ^{\left( m_{1}-m_{2}\right)
i+m_{2}j}\left( \alpha ^{sj}\left( x,y\right) \beta ^{s\left(
k_{1}+k_{2}-j\right) }\left( x,y\right) \right) ^{n}.
\end{eqnarray*}

Then the $Z$ transform of $F_{sn+m_{1}}^{k_{1}}\left( x,y\right)
F_{sn+m_{2}}^{k_{2}}\left( x,y\right) $ is%
\begin{eqnarray}
&&\mathcal{Z}\left( F_{sn+m_{1}}^{k_{1}}\left( x,y\right)
F_{sn+m_{2}}^{k_{2}}\left( x,y\right) \right)  \label{3.1} \\
&=&\left( x^{2}+4y\right) ^{-\frac{k_{1}+k_{2}}{2}}\beta
^{m_{1}k_{1}+m_{2}k_{2}}\left( x,y\right)
\sum_{j=0}^{k_{1}+k_{2}}\sum_{i=0}^{k_{1}}\left( -1\right) ^{k_{1}+k_{2}-j}%
\dbinom{k_{1}}{i}\dbinom{k_{2}}{j-i}\left( \frac{\alpha \left( x,y\right) }{%
\beta \left( x,y\right) }\right) ^{\left( m_{1}-m_{2}\right) i+m_{2}j} 
\notag \\
&&\times \frac{z}{z-\alpha ^{sj}\left( x,y\right) \beta ^{s\left(
k_{1}+k_{2}-j\right) }\left( x,y\right) }.  \notag
\end{eqnarray}

The following theorem tells us that the right-hand side of (\ref{3.1}) can
be written in a special form.

\begin{theorem}
\label{Th3.1}\textit{Let }$m_{1},m_{2}\in \mathbb{Z}$\textit{\ and }$%
k_{1},k_{2}\in \mathbb{N}^{\prime }$\textit{\ be given. The sequence }$%
F_{sn+m_{1}}^{k_{1}}\left( x,y\right) F_{sn+m_{2}}^{k_{2}}\left( x,y\right) $%
\textit{\ has }$Z$\textit{\ transform given by}%
\begin{eqnarray}
&&\mathcal{Z}\left( F_{sn+m_{1}}^{k_{1}}\left( x,y\right)
F_{sn+m_{2}}^{k_{2}}\left( x,y\right) \right)  \label{3.2} \\
&=&z\frac{\sum\limits_{i=0}^{k_{1}+k_{2}}\sum\limits_{j=0}^{i}\left(
-1\right) ^{\frac{\left( sj+2(s+1)\right) \left( j+1\right) }{2}}\dbinom{%
k_{1}+k_{2}+1}{j}_{F\!_{s}\!\left( x,y\right) }F_{m_{1}+s\left( i-j\right)
}^{k_{1}}\left( x,y\right) F_{m_{2}+s\left( i-j\right) }^{k_{2}}\left(
x,y\right) y^{\frac{sj\left( j-1\right) }{2}}z^{k_{1}+k_{2}-i}}{%
\sum\limits_{i=0}^{k_{1}+k_{2}+1}\left( -1\right) ^{\frac{\left(
si+2(s+1)\right) \left( i+1\right) }{2}}\dbinom{k_{1}+k_{2}+1}{i}%
_{F\!_{s}\!\left( x,y\right) }y^{\frac{si\left( i-1\right) }{2}%
}z^{k_{1}+k_{2}+1-i}}.  \notag
\end{eqnarray}
\end{theorem}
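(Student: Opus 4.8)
The plan is to take the partial--fraction expression (\ref{3.1}), put it over a common denominator, recognize that denominator as $D_{s,k_{1}+k_{2}+1}(x,y;z)$ via Proposition~\ref{Prop2.1}, and then read off the numerator by multiplying the Laurent series of the $Z$ transform against the explicit polynomial $D_{s,k_{1}+k_{2}+1}(x,y;z)$. Throughout write $k=k_{1}+k_{2}$, let $a_{n}=F_{sn+m_{1}}^{k_{1}}(x,y)F_{sn+m_{2}}^{k_{2}}(x,y)$ be the sequence in question, and set $r_{j}=\alpha^{sj}(x,y)\beta^{s(k-j)}(x,y)$ for $0\le j\le k$; these $k+1$ quantities are pairwise distinct because $\alpha(x,y)/\beta(x,y)$ is not a root of unity.

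The first step is the denominator. Formula (\ref{3.1}) exhibits $\mathcal{Z}(a_{n})$ as $\sum_{j=0}^{k}b_{j}\,z/(z-r_{j})$ for certain coefficients $b_{j}$ whose precise form is irrelevant. Putting this over the common denominator $\prod_{j=0}^{k}(z-r_{j})$ gives $\mathcal{Z}(a_{n})=z\bigl(\sum_{j=0}^{k}b_{j}\prod_{l\neq j}(z-r_{l})\bigr)\big/\prod_{j=0}^{k}(z-r_{j})$. Now invoke Proposition~\ref{Prop2.1} with its ``$k$'' equal to $k=k_{1}+k_{2}$: it says exactly that $\prod_{j=0}^{k}(z-r_{j})=(-1)^{s+1}D_{s,k+1}(x,y;z)$, where $D_{s,k+1}(x,y;z)$ is the polynomial appearing as the denominator of (\ref{3.2}). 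Consequently $P(z):=D_{s,k+1}(x,y;z)\,\mathcal{Z}(a_{n})=(-1)^{s+1}z\sum_{j=0}^{k}b_{j}\prod_{l\neq j}(z-r_{l})$ is a polynomial in $z$ of degree at most $k+1$ that is divisible by $z$; in particular $P$ has zero constant term and no negative powers of $z$.

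The second step recomputes $P(z)$ from the series. Write $\mathcal{Z}(a_{n})=\sum_{n\ge 0}a_{n}z^{-n}$ and $D_{s,k+1}(x,y;z)=\sum_{i=0}^{k+1}d_{i}z^{k+1-i}$ with $d_{i}=(-1)^{\frac{(si+2(s+1))(i+1)}{2}}\dbinom{k+1}{i}_{F_{s}(x,y)}y^{\frac{si(i-1)}{2}}$, so that $P(z)=\sum_{n\ge 0}\sum_{i=0}^{k+1}a_{n}d_{i}\,z^{k+1-i-n}$. Since the first step already tells us $P$ is a polynomial of degree $\le k+1$ with zero constant term, only the pairs $(n,i)$ with $i+n\le k$ can contribute, and for $0\le M\le k$ the coefficient of $z^{k+1-M}$ in $P$ is $\sum_{i=0}^{M}d_{i}\,a_{M-i}$. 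Substituting $a_{M-i}=F_{m_{1}+s(M-i)}^{k_{1}}(x,y)F_{m_{2}+s(M-i)}^{k_{2}}(x,y)$ and the formula for $d_{i}$, and then renaming the outer index $M\mapsto i$ and the summation index $i\mapsto j$, this coefficient is precisely the inner double sum occurring in the numerator of (\ref{3.2}). Hence $P(z)$ agrees term by term with the numerator of (\ref{3.2}), and dividing by $D_{s,k+1}(x,y;z)$ yields (\ref{3.2}). As every step above is an identity of rational functions of $z$ with coefficients rational in $x,y$, it is enough to argue at generic $(x,y)$ — where $\alpha,\beta$ are nonzero and the $r_{j}$ are distinct — and conclude by the identity principle.

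I expect the only non-mechanical point to be the appeal to Proposition~\ref{Prop2.1} identifying $\prod_{j=0}^{k}(z-r_{j})$ with $(-1)^{s+1}D_{s,k+1}(x,y;z)$; everything after that is bookkeeping with the two series and costs nothing. (One could instead bypass (\ref{3.1}) altogether and imitate the telescoping computation behind Proposition~\ref{Prop2.2}, building the $Z$ transform one power of $\alpha,\beta$ at a time and collapsing the resulting sums with Propositions~\ref{Prop2.4} and~\ref{Prop2.5}; but combining (\ref{3.1}) with Proposition~\ref{Prop2.1} is the shortest route, and it is the argument I would write.)
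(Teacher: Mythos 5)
Your argument is correct, but it is not the route the paper takes. The paper proves (\ref{3.2}) by induction on $k_{1}$ (and $k_{2}$): the base case $k_{1}=k_{2}=1$ is ground out by direct algebraic manipulation of the four partial fractions, and the inductive step splits $\binom{k_{1}+1}{i}=\binom{k_{1}}{i}+\binom{k_{1}}{i-1}$, rescales $z$ to $z/\alpha^{s}(x,y)$ and $z/\beta^{s}(x,y)$, applies the induction hypothesis, recombines via Proposition \ref{Prop2.2}(b) (formula (\ref{2.25})), and finishes with the index-reduction identity (\ref{1.11}). You instead take the already-derived expansion (\ref{3.1}), clear it over the common denominator $\prod_{j=0}^{k}(z-\alpha^{sj}\beta^{s(k-j)})$, identify that product with $(-1)^{s+1}D_{s,k+1}(x,y;z)$ by Proposition \ref{Prop2.1}, and then read the numerator off by comparing Laurent coefficients of $D_{s,k+1}(x,y;z)\,\mathcal{Z}(a_{n})$, which you already know is a polynomial of degree at most $k+1$ divisible by $z$; uniqueness of the Laurent expansion for $|z|$ large makes the coefficient identification legitimate, and the generic-$(x,y)$ remark disposes of the (inessential) distinctness of the roots. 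This is a genuinely shorter and cleaner proof: it needs only (\ref{3.1}) and Proposition \ref{Prop2.1}, it avoids the heavy base-case computation and the auxiliary identities (\ref{2.25}) and (\ref{1.11}) entirely, and the vanishing of the would-be constant and negative-power coefficients (which the paper's induction handles through an explicit vanishing sum, essentially the identity later recorded in Corollary \ref{Cor4.6}) comes for free from the polynomial form established in your first step. What the paper's induction buys in exchange is a template that it reuses verbatim for Theorem \ref{Th3.2} and for the general product (\ref{3.11}), where the indices $t_{i}$ vary and the simple convolution-of-series bookkeeping must be supplemented by Propositions \ref{Prop2.4} and \ref{Prop2.5}; but for the statement at hand your argument is complete as written.
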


\begin{proof}
We have to show that%
\begin{eqnarray}
&&\left( x^{2}+4y\right) ^{-\frac{k_{1}+k_{2}}{2}}\beta
^{m_{1}k_{1}+m_{2}k_{2}}\left( x,y\right)  \label{3.3} \\
&&\times \sum_{j=0}^{k_{1}+k_{2}}\sum_{i=0}^{k_{1}}\left( -1\right)
^{k_{1}+k_{2}-j}\dbinom{k_{1}}{i}\dbinom{k_{2}}{j-i}\left( \frac{\alpha
\left( x,y\right) }{\beta \left( x,y\right) }\right) ^{\left(
m_{1}-m_{2}\right) i+m_{2}j}\frac{z}{z-\alpha ^{sj}\left( x,y\right) \beta
^{s\left( k_{1}+k_{2}-j\right) }\left( x,y\right) }  \notag \\
&=&z\frac{\sum\limits_{i=0}^{k_{1}+k_{2}}\sum\limits_{j=0}^{i}\left(
-1\right) ^{\frac{\left( sj+2(s+1)\right) \left( j+1\right) }{2}}\dbinom{%
k_{1}+k_{2}+1}{j}_{F\!_{s}\!\left( x,y\right) }F_{m_{1}+s\left( i-j\right)
}^{k_{1}}\left( x,y\right) F_{m_{2}+s\left( i-j\right) }^{k_{2}}\left(
x,y\right) y^{\frac{sj\left( j-1\right) }{2}}z^{k_{1}+k_{2}-i}}{%
\sum_{i=0}^{k_{1}+k_{2}+1}\left( -1\right) ^{\frac{\left( si+2(s+1)\right)
\left( i+1\right) }{2}}\dbinom{k_{1}+k_{2}+1}{i}_{F\!_{s}\!\left( x,y\right)
}y^{\frac{si\left( i-1\right) }{2}}z^{k_{1}+k_{2}+1-i}}.  \notag
\end{eqnarray}

We will proceed by induction on $k_{1}$ and/or $k_{2}$ (the symmetry of (\ref%
{3.3}) with respect to $k_{1}$ and $k_{2}$ allows us to use induction on any
of these parameters). If $k_{1}=k_{2}=1$ the left hand side of (\ref{3.3})
(LHS$_{\ref{3.3}}$) is%
\begin{eqnarray*}
&&\text{LHS}_{\ref{3.3}} \\
&=&\left( x^{2}+4y\right) ^{-\frac{1+1}{2}}\beta ^{m_{1}+m_{2}}\left(
x,y\right) \sum_{i=0}^{1}\sum_{j=0}^{1}\dbinom{1}{i}\dbinom{1}{j}\left(
-1\right) ^{i+j} \\
&&\times \left( \frac{\alpha \left( x,y\right) }{\beta \left( x,y\right) }%
\right) ^{m_{1}i+m_{2}j}\frac{z}{z-\alpha ^{s\left( i+j\right) }\left(
x,y\right) \beta ^{s\left( 2-i-j\right) }\left( x,y\right) } \\
&=&\left( x^{2}+4y\right) ^{-1}z\left( \frac{\beta ^{m_{1}+m_{2}}\left(
x,y\right) }{z-\beta ^{2s}\left( x,y\right) }-\frac{\beta ^{m_{2}}\left(
x,y\right) \alpha ^{m_{1}}\left( x,y\right) }{z-\alpha ^{s}\left( x,y\right)
\beta ^{s}\left( x,y\right) }-\frac{\beta ^{m_{1}}\left( x,y\right) \alpha
^{m_{2}}\left( x,y\right) }{z-\alpha ^{s}\left( x,y\right) \beta ^{s}\left(
x,y\right) }+\frac{\alpha ^{m_{1}+m_{2}}\left( x,y\right) }{z-\alpha
^{2s}\left( x,y\right) }\right) .
\end{eqnarray*}

What follows is simply algebraic manipulation of the expression in
parenthesis, mixed with some identities from (\ref{1.6}), (\ref{1.7}) and (%
\ref{1.71}). We show some steps of this procedure. We have%
\begin{eqnarray*}
&&\text{LHS}_{\ref{3.3}} \\
&=&\left( x^{2}+4y\right) ^{-1}z\left( 
\begin{array}{c}
\dfrac{%
\begin{array}{c}
\left( \alpha ^{m_{1}+m_{2}}\left( x,y\right) +\beta ^{m_{1}+m_{2}}\left(
x,y\right) \right) z-\beta ^{m_{1}+m_{2}}\left( x,y\right) \alpha
^{2s}\left( x,y\right) \\ 
-\alpha ^{m_{1}+m_{2}}\left( x,y\right) \beta ^{2s}\left( x,y\right)%
\end{array}%
}{z^{2}-L_{2s}\left( x,y\right) z+y^{2s}} \\ 
\\ 
-\dfrac{%
\begin{array}{c}
\alpha ^{m_{1}}\left( x,y\right) \beta ^{m_{2}}\left( x,y\right) \\ 
+\alpha ^{m_{2}}\left( x,y\right) \beta ^{m_{1}}\left( x,y\right)%
\end{array}%
}{z-\left( -y\right) ^{s}}%
\end{array}%
\right) \\
&& \\
&=&\left( x^{2}+4y\right) ^{-1}\frac{z}{z^{3}-\left( L_{2s}\left( x,y\right)
+\left( -y\right) ^{s}\right) z^{2}+\left( -y\right) ^{s}\left( L_{2s}\left(
x,y\right) +\left( -y\right) ^{s}\right) z-\left( -y\right) ^{3s}} \\
&& \\
&&\times \left( \!\!\!%
\begin{array}{c}
\left( \alpha ^{m_{1}+m_{2}}\left( x,y\right) +\beta ^{m_{1}+m_{2}}\left(
x,y\right) -\alpha ^{m_{1}}\left( x,y\right) \beta ^{m_{2}}\left( x,y\right)
-\alpha ^{m_{2}}\left( x,y\right) \beta ^{m_{1}}\left( x,y\right) \right)
z^{2} \\ 
\\ 
-\left( 
\begin{array}{c}
\beta ^{m_{1}+m_{2}}\left( x,y\right) \alpha ^{2s}\!\left( x,y\right)
+\!\alpha ^{m_{1}+m_{2}}\left( x,y\right) \beta ^{2s}\!\left( x,y\right) \\ 
-\!\left( \alpha ^{m_{1}}\left( x,y\right) \beta ^{m_{2}}\!\left( x,y\right)
+\!\alpha ^{m_{2}}\left( x,y\right) \beta ^{m_{1}}\left( x,y\right) \right)
\left( \alpha ^{2s}\!\left( x,y\right) +\!\beta ^{2s}\left( x,y\right)
\right) \! \\ 
+\!\left( -y\right) ^{s}\left( \alpha ^{m_{1}+m_{2}}\!\left( x,y\right)
+\!\beta ^{m_{1}+m_{2}}\left( x,y\right) \right)%
\end{array}%
\right) z \\ 
\\ 
+\left( -y\right) ^{s}\beta ^{m_{1}+m_{2}}\left( x,y\right) \alpha
^{2s}\left( x,y\right) +\left( -y\right) ^{s}\alpha ^{m_{1}+m_{2}}\left(
x,y\right) \beta ^{2s}\left( x,y\right) \\ 
-y^{2s}\left( \alpha ^{m_{1}}\left( x,y\right) \beta ^{m_{2}}\left(
x,y\right) +\alpha ^{m_{2}}\left( x,y\right) \beta ^{m_{1}}\left( x,y\right)
\right)%
\end{array}%
\!\!\!\right) \\
&=&\frac{z}{\left( -1\right) ^{s+1}z^{3}+\left( -1\right) ^{s}\frac{%
F_{3s}\left( x,y\right) }{F\!_{s}\!\left( x,y\right) }z^{2}-y^{s}\frac{%
F_{3s}\left( x,y\right) }{F\!_{s}\!\left( x,y\right) }z+y^{3s}} \\
&& \\
&&\times \left( \!\!\!%
\begin{array}{c}
\left( -1\right) ^{s+1}F_{m_{1}}\left( x,y\right) F_{m_{2}}\left( x,y\right)
z^{2} \\ 
+ \\ 
\left( -1\right) ^{s+1}\left( F_{m_{1}+s}\left( x,y\right) F_{m_{2}+s}\left(
x,y\right) -\frac{F_{3s}\left( x,y\right) }{F\!_{s}\!\left( x,y\right) }%
F_{m_{1}}\left( x,y\right) F_{m_{2}}\left( x,y\right) \right) z \\ 
+ \\ 
\left( -1\right) ^{s+1}F_{m_{1}+2s}\left( x,y\right) F_{m_{2}+2s}\left(
x,y\right) +\left( -1\right) ^{s}\frac{F_{3s}\left( x,y\right) }{%
F\!_{s}\!\left( x,y\right) }F_{m_{1}+s}\left( x,y\right) F_{m_{2}+s}\left(
x,y\right) \\ 
-\frac{F_{3s}\left( x,y\right) }{F\!_{s}\!\left( x,y\right) }F_{m_{1}}\left(
x,y\right) F_{m_{2}}\left( x,y\right) y^{s}%
\end{array}%
\!\!\!\right) \\
&& \\
&=&z\frac{\sum_{i=0}^{2}\sum_{j=0}^{i}\left( -1\right) ^{\frac{\left(
si+2(s+1)\right) \left( i+1\right) }{2}}\dbinom{3}{j}_{F\!_{s}\!\left(
x,y\right) }F_{m_{1}+s\left( i-j\right) }\left( x,y\right) F_{m_{2}+s\left(
i-j\right) }\left( x,y\right) y^{\frac{sj\left( j-1\right) }{2}}z^{2-i}}{%
\sum_{i=0}^{3}\left( -1\right) ^{\frac{\left( si+2(s+1)\right) \left(
i+1\right) }{2}}\dbinom{3}{i}_{F\!_{s}\!\left( x,y\right) }y^{\frac{si\left(
i-1\right) }{2}}z^{3-i}},
\end{eqnarray*}%
which ends to show that (\ref{3.3}) is valid with $k_{1}=k_{2}=1$. Suppose
now that (\ref{3.3}) is true for a given $k_{1}$. We will show that it is
also true for $k_{1}+1$. We have%
\begin{eqnarray}
&&\mathcal{Z}\left( F_{sn+m_{1}}^{k_{1}+1}\left( x,y\right)
F_{sn+m_{2}}^{k_{2}}\left( x,y\right) \right)  \label{3.35} \\
&=&\left( x^{2}+4y\right) ^{-\frac{k_{1}+1+k_{2}}{2}}\beta ^{m_{1}\left(
k_{1}+1\right) +m_{2}k_{2}}\left( x,y\right)
\sum_{j=0}^{k_{1}+k_{2}+1}\sum_{i=0}^{k_{1}+1}\left( -1\right)
^{k_{1}+1+k_{2}-j}\dbinom{k_{1}+1}{i}\dbinom{k_{2}}{j-i}  \notag \\
&&\times \left( \frac{\alpha \left( x,y\right) }{\beta \left( x,y\right) }%
\right) ^{\left( m_{1}-m_{2}\right) i+m_{2}j}\frac{z}{z-\alpha ^{sj}\left(
x,y\right) \beta ^{s\left( k_{1}+1+k_{2}-j\right) }\left( x,y\right) }, 
\notag
\end{eqnarray}%
and we want to show that the right-hand side of (\ref{3.35}) is equal to 
\begin{equation}
z\frac{\sum\limits_{i=0}^{k_{1}+1+k_{2}}\sum\limits_{j=0}^{i}\left(
-1\right) ^{\frac{\left( sj+2(s+1)\right) \left( j+1\right) }{2}}\dbinom{%
k_{1}+k_{2}+2}{j}_{F\!_{s}\!\left( x,y\right) }F_{m_{1}+s\left( i-j\right)
}^{k_{1}+1}\left( x,y\right) F_{m_{2}+s\left( i-j\right) }^{k_{2}}\left(
x,y\right) y^{\frac{sj\left( j-1\right) }{2}}z^{k_{1}+1+k_{2}-i}}{%
\sum_{i=0}^{k_{1}+k_{2}+2}\left( -1\right) ^{\frac{\left( si+2(s+1)\right)
\left( i+1\right) }{2}}\dbinom{k_{1}+k_{2}+2}{i}_{F\!_{s}\!\left( x,y\right)
}y^{\frac{si\left( i-1\right) }{2}}z^{k_{1}+k_{2}+2-i}},  \label{3.36}
\end{equation}

Write the binomial coefficient $\binom{k_{1}+1}{i}$ (of the right-hand side
of (\ref{3.35})) as $\binom{k_{1}}{i}+\binom{k_{1}}{i-1}$, separate in two
sums and shift the indices of the second sum, to write (\ref{3.35}) as%
\begin{eqnarray*}
&&\mathcal{Z}\left( F_{sn+m_{1}}^{k_{1}+1}\left( x,y\right)
F_{sn+m_{2}}^{k_{2}}\left( x,y\right) \right) \\
&& \\
&=&\left( x^{2}+4y\right) ^{-\frac{k_{1}+1+k_{2}}{2}}\beta ^{m_{1}\left(
k_{1}+1\right) +m_{2}k_{2}}\left( x,y\right)
\sum_{j=0}^{k_{1}+k_{2}}\sum_{i=0}^{k_{1}}\left( -1\right) ^{k_{1}+1+k_{2}-j}
\\
&&\times \binom{k_{1}}{i}\dbinom{k_{2}}{j-i}\left( \frac{\alpha \left(
x,y\right) }{\beta \left( x,y\right) }\right) ^{\left( m_{1}-m_{2}\right)
i+m_{2}j}\frac{z}{z-\alpha ^{sj}\left( x,y\right) \beta ^{s\left(
k_{1}+1+k_{2}-j\right) }\left( x,y\right) } \\
&& \\
&&%
\begin{array}{ccccccccccccccccc}
&  &  &  &  &  &  &  &  &  &  &  &  &  &  &  & 
\end{array}%
+ \\
&& \\
&&\left( x^{2}+4y\right) ^{-\frac{k_{1}+1+k_{2}}{2}}\beta ^{m_{1}\left(
k_{1}+1\right) +m_{2}k_{2}}\left( x,y\right)
\sum_{j=0}^{k_{1}+k_{2}}\sum_{i=0}^{k_{1}}\left( -1\right) ^{k_{1}+k_{2}-j}
\\
&&\times \binom{k_{1}}{i}\dbinom{k_{2}}{j-i}\left( \frac{\alpha \left(
x,y\right) }{\beta \left( x,y\right) }\right) ^{\left( m_{1}-m_{2}\right)
\left( i+1\right) +m_{2}\left( j+1\right) }\frac{z}{z-\alpha ^{s\left(
j+1\right) }\left( x,y\right) \beta ^{s\left( k_{1}+k_{2}-j\right) }\left(
x,y\right) }
\end{eqnarray*}

or%
\begin{eqnarray*}
&&\mathcal{Z}\left( F_{sn+m_{1}}^{k_{1}+1}\left( x,y\right)
F_{sn+m_{2}}^{k_{2}}\left( x,y\right) \right) \\
&& \\
&=&-\left( x^{2}+4y\right) ^{-\frac{k_{1}+1+k_{2}}{2}}\beta ^{m_{1}\left(
k_{1}+1\right) +m_{2}k_{2}}\left( x,y\right)
\sum_{j=0}^{k_{1}+k_{2}}\sum_{i=0}^{k_{1}}\left( -1\right) ^{k_{1}+k_{2}-j}
\\
&&\times \binom{k_{1}}{i}\dbinom{k_{2}}{j-i}\left( \frac{\alpha \left(
x,y\right) }{\beta \left( x,y\right) }\right) ^{\left( m_{1}-m_{2}\right)
i+m_{2}j}\frac{\frac{z}{\beta ^{s}\left( x,y\right) }}{\frac{z}{\beta
^{s}\left( x,y\right) }-\alpha ^{sj}\left( x,y\right) \beta ^{s\left(
k_{1}+k_{2}-j\right) }\left( x,y\right) } \\
&& \\
&&%
\begin{array}{ccccccccccccccccc}
&  &  &  &  &  &  &  &  &  &  &  &  &  &  &  & 
\end{array}%
+ \\
&& \\
&&\left( x^{2}+4y\right) ^{-\frac{k_{1}+1+k_{2}}{2}}\beta ^{m_{1}\left(
k_{1}+1\right) +m_{2}k_{2}}\left( x,y\right) \left( \frac{\alpha \left(
x,y\right) }{\beta \left( x,y\right) }\right)
^{m_{1}}\sum_{j=0}^{k_{1}+k_{2}}\sum_{i=0}^{k_{1}}\left( -1\right)
^{k_{1}+k_{2}-j} \\
&&\times \binom{k_{1}}{i}\dbinom{k_{2}}{j-i}\left( \frac{\alpha \left(
x,y\right) }{\beta \left( x,y\right) }\right) ^{\left( m_{1}-m_{2}\right)
i+m_{2}j}\frac{\frac{z}{\alpha ^{s}\left( x,y\right) }}{\frac{z}{\alpha
^{s}\left( x,y\right) }-\alpha ^{sj}\left( x,y\right) \beta ^{s\left(
k_{1}+k_{2}-j\right) }\left( x,y\right) }.
\end{eqnarray*}%
Now use the induction hypothesis to write%
\begin{eqnarray*}
&&\mathcal{Z}\left( F_{sn+m_{1}}^{k_{1}+1}\left( x,y\right)
F_{sn+m_{2}}^{k_{2}}\left( x,y\right) \right) \\
&=&\frac{-\beta ^{m_{1}}\left( x,y\right) z}{\sqrt{x^{2}+4y}\beta ^{s}\left(
x,y\right) }\frac{%
\begin{array}{c}
\sum\limits_{i=0}^{k_{1}+k_{2}}\sum\limits_{j=0}^{i}\left( -1\right) ^{\frac{%
\left( sj+2(s+1)\right) \left( j+1\right) }{2}}\dbinom{k_{1}+k_{2}+1}{j}%
_{F\!_{s}\!\left( x,y\right) } \\ 
\times F_{m_{1}+s\left( i-j\right) }^{k_{1}}\left( x,y\right)
F_{m_{2}+s\left( i-j\right) }^{k_{2}}\left( x,y\right) y^{\frac{sj\left(
j-1\right) }{2}}\left( \frac{z}{\beta ^{s}\left( x,y\right) }\right)
^{k_{1}+k_{2}-i}%
\end{array}%
}{\sum\limits_{i=0}^{k_{1}+k_{2}+1}\left( -1\right) ^{\frac{\left(
si+2(s+1)\right) \left( i+1\right) }{2}}\dbinom{k_{1}+k_{2}+1}{i}%
_{F\!_{s}\!\left( x,y\right) }y^{\frac{si\left( i-1\right) }{2}}\left( \frac{%
z}{\beta ^{s}\left( x,y\right) }\right) ^{k_{1}+k_{2}+1-i}} \\
&& \\
&&%
\begin{array}{ccccccccccccccccc}
&  &  &  &  &  &  &  &  &  &  &  &  &  &  &  & 
\end{array}%
+ \\
&& \\
&&\frac{\alpha ^{m_{1}}\left( x,y\right) z}{\sqrt{x^{2}+4y}\alpha ^{s}\left(
x,y\right) }\frac{%
\begin{array}{c}
\sum\limits_{i=0}^{k_{1}+k_{2}}\sum\limits_{j=0}^{i}\left( -1\right) ^{\frac{%
\left( sj+2(s+1)\right) \left( j+1\right) }{2}}\dbinom{k_{1}+k_{2}+1}{j}%
_{F\!_{s}\!\left( x,y\right) } \\ 
\times F_{m_{1}+s\left( i-j\right) }^{k_{1}}\left( x,y\right)
F_{m_{2}+s\left( i-j\right) }^{k_{2}}\left( x,y\right) y^{\frac{sj\left(
j-1\right) }{2}}\left( \frac{z}{\alpha ^{s}\left( x,y\right) }\right)
^{k_{1}+k_{2}-i}%
\end{array}%
}{\sum\limits_{i=0}^{k_{1}+k_{2}+1}\left( -1\right) ^{\frac{\left(
si+2(s+1)\right) \left( i+1\right) }{2}}\dbinom{k_{1}+k_{2}+1}{i}%
_{F\!_{s}\!\left( x,y\right) }y^{\frac{si\left( i-1\right) }{2}}\left( \frac{%
z}{\alpha ^{s}\left( x,y\right) }\right) ^{k_{1}+k_{2}+1-i}}.
\end{eqnarray*}

Some further simplifications give us 
\begin{eqnarray*}
&&\mathcal{Z}\left( F_{sn+m_{1}}^{k_{1}+1}\left( x,y\right)
F_{sn+m_{2}}^{k_{2}}\left( x,y\right) \right) \\
&=&-\frac{\beta ^{m_{1}}\left( x,y\right) z}{\sqrt{x^{2}+4y}}\frac{%
\begin{array}{c}
\sum\limits_{i=0}^{k_{1}+k_{2}}\sum\limits_{j=0}^{i}\left( -1\right) ^{\frac{%
\left( sj+2(s+1)\right) \left( j+1\right) }{2}}\!\dbinom{k_{1}+k_{2}+1}{j}%
_{F\!_{s}\!\left( x,y\right) }\! \\ 
\times F_{m_{1}+s\left( i-j\right) }^{k_{1}}\left( x,y\right)
F_{m_{2}+s\left( i-j\right) }^{k_{2}}\left( x,y\right) \!y^{\frac{sj\left(
j-1\right) }{2}}\beta ^{si}\left( x,y\right) z^{k_{1}+k_{2}-i}%
\end{array}%
}{\sum\limits_{i=0}^{k_{1}+k_{2}+1}\left( -1\right) ^{\frac{\left(
si+2(s+1)\right) \left( i+1\right) }{2}}\dbinom{k_{1}+k_{2}+1}{i}%
_{F\!_{s}\!\left( x,y\right) }y^{\frac{si\left( i-1\right) }{2}}\beta
^{si}\left( x,y\right) z^{k_{1}+k_{2}+1-i}} \\
&& \\
&&+\frac{\alpha ^{m_{1}}\left( x,y\right) z}{\sqrt{x^{2}+4y}}\frac{%
\begin{array}{c}
\sum\limits_{i=0}^{k_{1}+k_{2}}\sum\limits_{j=0}^{i}\left( -1\right) ^{\frac{%
\left( sj+2(s+1)\right) \left( j+1\right) }{2}}\!\dbinom{k_{1}+k_{2}+1}{j}%
_{F\!_{s}\!\left( x,y\right) }\! \\ 
\times \!F_{m_{1}+s\left( i-j\right) }^{k_{1}}\left( x,y\right)
F_{m_{2}+s\left( i-j\right) }^{k_{2}}\left( x,y\right) \!y^{\frac{sj\left(
j-1\right) }{2}}\alpha ^{si}\left( x,y\right) z^{k_{1}+k_{2}-i}%
\end{array}%
}{\sum\limits_{i=0}^{k_{1}+k_{2}+1}\left( -1\right) ^{\frac{\left(
si+2(s+1)\right) \left( i+1\right) }{2}}\dbinom{k_{1}+k_{2}+1}{i}%
_{F\!_{s}\!\left( x,y\right) }y^{\frac{si\left( i-1\right) }{2}}\alpha
^{si}\left( x,y\right) z^{k_{1}+k_{2}+1-i}} \\
&& \\
&& \\
&=&\frac{z}{\sqrt{x^{2}+4y}}\!\sum_{i=0}^{k_{1}+k_{2}}\sum_{j=0}^{i}\left(
-1\right) ^{\frac{\left( sj+2(s+1)\right) \left( j+1\right) }{2}}\!\dbinom{%
k_{1}+k_{2}+1}{j}_{F\!_{s}\!\left( x,y\right) }\!\!F_{m_{1}+s\left(
i-j\right) }^{k_{1}}\!\left( x,y\right) F_{m_{2}+s\left( i-j\right)
}^{k_{2}}\!\left( x,y\right) \!y^{\frac{sj\left( j-1\right) }{2}} \\
&&\times \left( 
\begin{array}{c}
\dfrac{\alpha ^{si+m_{1}}\left( x,y\right) }{\sum%
\limits_{i=0}^{k_{1}+k_{2}+1}\left( -1\right) ^{\frac{\left(
si+2(s+1)\right) \left( i+1\right) }{2}}\dbinom{k_{1}+k_{2}+1}{i}%
_{F\!_{s}\!\left( x,y\right) }y^{\frac{si\left( i-1\right) }{2}}\alpha
^{si}\left( x,y\right) z^{k_{1}+k_{2}+1-i}} \\ 
\\ 
-\dfrac{\beta ^{si+m_{1}}\left( x,y\right) }{\sum%
\limits_{i=0}^{k_{1}+k_{2}+1}\left( -1\right) ^{\frac{\left(
si+2(s+1)\right) \left( i+1\right) }{2}}\dbinom{k_{1}+k_{2}+1}{i}%
_{F\!_{s}\!\left( x,y\right) }y^{\frac{si\left( i-1\right) }{2}}\beta
^{si}\left( x,y\right) z^{k_{1}+k_{2}+1-i}}%
\end{array}%
\right) z^{k_{1}+k_{2}-i}
\end{eqnarray*}

By using (\ref{2.25}) we can write%
\begin{eqnarray}
&&\mathcal{Z}\left( F_{sn+m_{1}}^{k_{1}+1}\left( x,y\right)
F_{sn+m_{2}}^{k_{2}}\left( x,y\right) \right)  \label{3.31} \\
&=&z\frac{%
\begin{array}{c}
\sum\limits_{i=0}^{k_{1}+k_{2}}\sum\limits_{j=0}^{i}\left( -1\right) ^{\frac{%
\left( sj+2(s+1)\right) \left( j+1\right) }{2}}\dbinom{k_{1}+k_{2}+1}{j}%
_{F\!_{s}\!\left( x,y\right) }F_{m_{1}+s\left( i-j\right) }^{k_{1}}\left(
x,y\right) F_{m_{2}+s\left( i-j\right) }^{k_{2}}\left( x,y\right) \\ 
\times \left( F_{si+m_{1}}\left( x,y\right) z+\left( -y\right)
^{si+m_{1}}F_{s\left( k_{1}+k_{2}-i+1\right) -m_{1}}\left( x,y\right)
\right) y^{\frac{sj\left( j-1\right) }{2}}z^{k_{1}+k_{2}-i}%
\end{array}%
}{\sum\limits_{i=0}^{k_{1}+k_{2}+2}\left( -1\right) ^{\frac{\left(
si+2(s+1)\right) \left( i+1\right) }{2}}\dbinom{k_{1}+k_{2}+2}{i}%
_{F\!_{s}\!\left( x,y\right) }y^{\frac{si\left( i-1\right) }{2}%
}z^{k_{1}+k_{2}+2-i}}.  \notag
\end{eqnarray}

Observe that (\ref{3.31}) has the expected denominator (of (\ref{3.36})).
Let us work with the numerator. We have%
\begin{eqnarray}
&&z\sum_{i=0}^{k_{1}+k_{2}}\sum_{j=0}^{i}\left( -1\right) ^{\frac{\left(
sj+2(s+1)\right) \left( j+1\right) }{2}}\dbinom{k_{1}+k_{2}+1}{j}%
_{F\!_{s}\!\left( x,y\right) }F_{m_{1}+s\left( i-j\right) }^{k_{1}}\left(
x,y\right) F_{m_{2}+s\left( i-j\right) }^{k_{2}}\left( x,y\right)
\label{3.4} \\
&&\times \left( F_{si+m_{1}}\left( x,y\right) z+\left( -y\right)
^{si+m_{1}}F_{s\left( k_{1}+k_{2}-i+1\right) -m_{1}}\left( x,y\right)
\right) y^{\frac{sj\left( j-1\right) }{2}}z^{k_{1}+k_{2}-i}  \notag \\
&&  \notag \\
&=&z\sum_{i=0}^{k_{1}+k_{2}}\sum_{j=0}^{i}\left( -1\right) ^{\frac{\left(
sj+2(s+1)\right) \left( j+1\right) }{2}}\dbinom{k_{1}+k_{2}+1}{j}%
_{F\!_{s}\!\left( x,y\right) }  \notag \\
&&\times F_{m_{1}+s\left( i-j\right) }^{k_{1}}\left( x,y\right)
F_{m_{2}+s\left( i-j\right) }^{k_{2}}\left( x,y\right) F_{si+m_{1}}\left(
x,y\right) y^{\frac{sj\left( j-1\right) }{2}}z^{k_{1}+1+k_{2}-i}  \notag \\
&&  \notag \\
&&+z\sum_{i=1}^{k_{1}+k_{2}+1}\sum_{j=1}^{i}\left( -1\right) ^{\frac{\left(
s\left( j-1\right) +2(s+1)\right) j}{2}}\dbinom{k_{1}+k_{2}+1}{j-1}%
_{F\!_{s}\!\left( x,y\right) }  \notag \\
&&\times F_{m_{1}+s\left( i-j\right) }^{k_{1}}\left( x,y\right)
F_{m_{2}+s\left( i-j\right) }^{k_{2}}\left( x,y\right) F_{s\left(
k_{1}+k_{2}-i+2\right) -m_{1}}\left( x,y\right) \left( -y\right)
^{si+m_{1}-s}y^{\frac{s\left( j-1\right) \left( j-2\right) }{2}%
}z^{k_{1}+1+k_{2}-i}.  \notag
\end{eqnarray}

Since%
\begin{equation*}
\sum_{j=0}^{k_{1}+k_{2}+1}\left( -1\right) ^{\frac{\left( sj+2(s+1)\right)
\left( j+1\right) }{2}}\dbinom{k_{1}+k_{2}+1}{j}_{F\!_{s}\!\left( x,y\right)
}F_{m_{1}+s\left( k_{1}+k_{2}+1-j\right) }^{k_{1}}\left( x,y\right)
F_{m_{2}+s\left( k_{1}+k_{2}+1-j\right) }^{k_{2}}\left( x,y\right) y^{\frac{%
sj\left( j-1\right) }{2}}=0,
\end{equation*}%
we can write the right-hand side of (\ref{3.4}) as%
\begin{eqnarray*}
&&z\!\sum_{i=0}^{k_{1}+k_{2}+1}\!\sum_{j=0}^{i}\!\left( -1\right) ^{\frac{%
\left( sj+2(s+1)\right) \left( j+1\right) }{2}}\!\dbinom{k_{1}+k_{2}+2}{j}%
_{\!F\!_{s}\!\left( x,y\right) }\frac{F_{m_{1}+s\left( i-j\right)
}^{k_{1}}\!\left( x,y\right) F_{m_{2}+s\left( i-j\right) }^{k_{2}}\!\left(
x,y\right) }{F_{s\left( k_{1}+k_{2}+2\right) }\!\left( x,y\right) }\text{ }%
y^{\frac{sj\left( j-1\right) }{2}}z^{k_{1}+1+k_{2}-i} \\
&&\times \left( F_{si+m_{1}}\left( x,y\right) F_{s\left(
k_{1}+k_{2}+2-j\right) }\left( x,y\right) -\left( -y\right) ^{s\left(
i-j\right) +m_{1}}F_{s\left( k_{1}+k_{2}-i+2\right) -m_{1}}\left( x,y\right)
F_{sj}\left( x,y\right) \left( x,y\right) \right) .
\end{eqnarray*}

Finally, by using (\ref{1.11}) we see that%
\begin{eqnarray*}
&&F_{si+m_{1}}\left( x,y\right) F_{s\left( k_{1}+k_{2}+2-j\right) }\left(
x,y\right) -\left( -y\right) ^{s\left( i-j\right) +m_{1}}F_{s\left(
k_{1}+k_{2}-i+2\right) -m_{1}}\left( x,y\right) F_{sj}\left( x,y\right) \\
&=&F_{s\left( k_{1}+k_{2}+2\right) }\left( x,y\right) F_{m_{1}+s\left(
i-j\right) }\left( x,y\right) ,
\end{eqnarray*}%
and then the right-hand side of (\ref{3.4}) is our expected numerator, namely%
\begin{equation*}
z\sum_{i=0}^{k_{1}+k_{2}+1}\sum_{j=0}^{i}\left( -1\right) ^{\frac{\left(
sj+2(s+1)\right) \left( j+1\right) }{2}}\dbinom{k_{1}+k_{2}+2}{j}%
_{F\!_{s}\!\left( x,y\right) }F_{m_{1}+s\left( i-j\right) }^{k_{1}+1}\left(
x,y\right) F_{m_{2}+s\left( i-j\right) }^{k_{2}}\left( x,y\right) y^{\frac{%
sj\left( j-1\right) }{2}}z^{k_{1}+1+k_{2}-i}.
\end{equation*}

This ends our induction argument.
\end{proof}

\begin{theorem}
\label{Th3.2}\textit{Let }$m_{1},m_{2}\in \mathbb{Z}$\textit{\ and }$%
t_{1},t_{2}\in \mathbb{N}^{\prime }$\textit{\ be given. The sequence }$%
F_{t_{1}sn+m_{1}}\left( x,y\right) F_{t_{2}sn+m_{2}}\left( x,y\right) $%
\textit{\ has }$Z$\ transform \textit{given by:}%
\begin{eqnarray}
&&\mathcal{Z}\left( F_{t_{1}sn+m_{1}}\left( x,y\right)
F_{t_{2}sn+m_{2}}\left( x,y\right) \right)  \label{3.5} \\
&&  \notag \\
&=&z\frac{\sum\limits_{i=0}^{t_{1}+t_{2}}\sum\limits_{j=0}^{i}\left(
-1\right) ^{\frac{\left( sj+2(s+1)\right) \left( j+1\right) }{2}}\dbinom{%
t_{1}+t_{2}+1}{j}_{F\!_{s}\!\left( x,y\right) }F_{m_{1}+t_{1}s\left(
i-j\right) }\left( x,y\right) F_{m_{2}+t_{2}s\left( i-j\right) }\left(
x,y\right) y^{\frac{sj\left( j-1\right) }{2}}z^{t_{1}+t_{2}-i}}{%
\sum\limits_{i=0}^{t_{1}+t_{2}+1}\left( -1\right) ^{\frac{\left(
si+2(s+1)\right) \left( i+1\right) }{2}}\dbinom{t_{1}+t_{2}+1}{i}%
_{F\!_{s}\!\left( x,y\right) }y^{\frac{si\left( i-1\right) }{2}%
}z^{t_{1}+t_{2}+1-i}}.  \notag
\end{eqnarray}
\end{theorem}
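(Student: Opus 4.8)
The plan is to reconstruct $\mathcal{Z}\left( F_{t_{1}sn+m_{1}}\left( x,y\right) F_{t_{2}sn+m_{2}}\left( x,y\right) \right) $ from a single polynomial annihilator, rather than by a direct manipulation of a Binet partial fraction expansion. Put $k=t_{1}+t_{2}$, write $a_{n}=F_{t_{1}sn+m_{1}}\left( x,y\right) F_{t_{2}sn+m_{2}}\left( x,y\right) $ for the sequence under study, and set $b_{i}=\left( -1\right) ^{\frac{\left( si+2(s+1)\right) \left( i+1\right) }{2}}\binom{k+1}{i}_{F_{s}\left( x,y\right) }y^{\frac{si\left( i-1\right) }{2}}$ for $0\leq i\leq k+1$, so that $D_{s,k+1}\!\left( x,y;z\right) =\sum_{i=0}^{k+1}b_{i}z^{k+1-i}$. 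By Proposition \ref{Prop2.1} we have $D_{s,k+1}\!\left( x,y;z\right) =\left( -1\right) ^{s+1}\prod_{j=0}^{k}\left( z-\alpha ^{sj}\left( x,y\right) \beta ^{s\left( k-j\right) }\left( x,y\right) \right) $; in particular the four numbers $\alpha ^{\left( t_{1}+t_{2}\right) s}$, $\alpha ^{t_{1}s}\beta ^{t_{2}s}$, $\alpha ^{t_{2}s}\beta ^{t_{1}s}$, $\beta ^{\left( t_{1}+t_{2}\right) s}$ (suppressing the argument $\left( x,y\right) $) are precisely the roots of $D_{s,k+1}$ obtained from $j=k,\,t_{1},\,t_{2},\,0$.

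The first step is to establish the vanishing identity
\begin{equation*}
\sum_{i=0}^{k+1}b_{i}\,F_{t_{1}s\left( n-i\right) +m_{1}}\left( x,y\right) F_{t_{2}s\left( n-i\right) +m_{2}}\left( x,y\right) =0\qquad \text{for all }n\in \mathbb{Z}.
\end{equation*}
Expanding the product of the two Fibonacci polynomials by Binet's formula (\ref{1.2}) exhibits it as a combination of four monomials, each with $\left( n-i\right) $-dependence $\gamma ^{n-i}$ for $\gamma \in \left\{ \alpha ^{\left( t_{1}+t_{2}\right) s},\alpha ^{t_{1}s}\beta ^{t_{2}s},\alpha ^{t_{2}s}\beta ^{t_{1}s},\beta ^{\left( t_{1}+t_{2}\right) s}\right\} $. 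For each such $\gamma $ one pulls $\gamma ^{n}$ out of the $i$-sum, leaving $\sum_{i=0}^{k+1}b_{i}\gamma ^{-i}=\gamma ^{-\left( k+1\right) }D_{s,k+1}\!\left( x,y;\gamma \right) =0$ because $\gamma $ is a root of $D_{s,k+1}$. The coinciding-root situations $t_{1}=t_{2}$ or $t_{1}t_{2}=0$ cause no trouble: there the two middle monomials simply merge into one term whose base is still a root of $D_{s,k+1}$.

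The second step feeds this into the $Z$ transform. From $\mathcal{Z}\left( a_{n}\right) =\sum_{n\geq 0}a_{n}z^{-n}$ one has $z^{-i}\mathcal{Z}\left( a_{n}\right) =\mathcal{Z}\left( a_{n-i}\right) $, with $a_{n-i}$ the $i$-fold delayed sequence, so linearity gives $\left( \sum_{i=0}^{k+1}b_{i}z^{-i}\right) \mathcal{Z}\left( a_{n}\right) =\mathcal{Z}\left( c_{n}\right) $ where $c_{n}=\sum_{i=0}^{\min \left( n,k+1\right) }b_{i}a_{n-i}$. The identity of the previous paragraph says $c_{n}=0$ for $n\geq k+1$, hence $\mathcal{Z}\left( c_{n}\right) =\sum_{n=0}^{k}\left( \sum_{i=0}^{n}b_{i}a_{n-i}\right) z^{-n}$. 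Multiplying through by $z^{k+1}$ turns the left side into $D_{s,k+1}\!\left( x,y;z\right) \mathcal{Z}\left( a_{n}\right) $, which is exactly the denominator in (\ref{3.5}), and the right side into $z\sum_{n=0}^{k}\sum_{i=0}^{n}b_{i}a_{n-i}z^{k-n}$. Renaming the outer index $n$ as $i$ and the inner index $i$ as $j$, and recalling $a_{i-j}=F_{m_{1}+t_{1}s\left( i-j\right) }\left( x,y\right) F_{m_{2}+t_{2}s\left( i-j\right) }\left( x,y\right) $, the latter is verbatim the numerator of (\ref{3.5}); dividing by $D_{s,k+1}$ yields (\ref{3.5}).

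I expect the only delicate point to be the final matching: one has to confirm that the exponents of $\left( -1\right) $, of $y$, and of $z$ agree term by term between the convolution form $\sum_{n,i}b_{i}a_{n-i}z^{k-n}$ and the double sum displayed in (\ref{3.5}); everything else is a one-line application of Proposition \ref{Prop2.1} and of the shift property of the $Z$ transform, and no induction is needed. (If one preferred to mirror the proof of Theorem \ref{Th3.1}, one could instead induct on $t_{1}$ from the base case $t_{1}=t_{2}=1$, which is the $k_{1}=k_{2}=1$ instance of Theorem \ref{Th3.1}, recombining with Proposition \ref{Prop2.2}; but the annihilator route is shorter.)
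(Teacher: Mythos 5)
Your proof is correct, but it follows a genuinely different route from the paper. The paper proves (\ref{3.5}) by induction on $t_{1}$ (with the base case $t_{1}=t_{2}=1$ supplied by Theorem \ref{Th3.1}), using the decomposition $F_{\left( t_{1}+1\right) sn+m_{1}}=L_{sn}F_{t_{1}sn+m_{1}}-\left( -y\right) ^{sn}F_{\left( t_{1}-1\right) sn+m_{1}}$, the multiplication rules (\ref{2.2}) and (\ref{2.5}), and then Proposition \ref{Prop2.2}, Proposition \ref{Prop2.4}, Lemma \ref{Lemma2.3} and the index-reduction identities (\ref{1.11})--(\ref{1.12}) to recombine the two resulting transforms into the desired numerator and denominator. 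You instead observe that, since Binet's formula writes $a_{n}=F_{t_{1}sn+m_{1}}F_{t_{2}sn+m_{2}}$ as a combination of $\gamma ^{n}$ with $\gamma \in \left\{ \alpha ^{s\left( t_{1}+t_{2}\right) },\alpha ^{st_{1}}\beta ^{st_{2}},\alpha ^{st_{2}}\beta ^{st_{1}},\beta ^{s\left( t_{1}+t_{2}\right) }\right\} $, all of which are among the roots $\alpha ^{sj}\beta ^{s\left( k-j\right) }$ of $D_{s,k+1}$ ($k=t_{1}+t_{2}$, $j=k,t_{1},t_{2},0$), the polynomial $D_{s,k+1}$ annihilates the sequence, i.e. $\sum_{i=0}^{k+1}b_{i}a_{n-i}=0$ for $n\geq k+1$; the delay property $z^{-i}\mathcal{Z}\left( a_{n}\right) =\mathcal{Z}\left( a_{n-i}\right) $ then forces $D_{s,k+1}\left( x,y;z\right) \mathcal{Z}\left( a_{n}\right) $ to equal the finite polynomial $z\sum_{i=0}^{k}\sum_{j=0}^{i}b_{j}a_{i-j}z^{k-i}$, which is verbatim the numerator of (\ref{3.5}). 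The index bookkeeping you flag as the delicate point does check out, and your remark about coinciding roots (e.g. $t_{1}=t_{2}$ or $t_{1}t_{2}=0$) is the right one, since only membership of each base among the roots is used, not distinctness; the implicit division by $\gamma ^{k+1}$ is harmless at the paper's level of formal rigor because $\alpha \beta =-y\neq 0$. What your approach buys is brevity and generality: it needs only Proposition \ref{Prop2.1} and the definition of the $Z$ transform, dispenses with induction, with Theorem \ref{Th3.1} as a base case, and with Propositions \ref{Prop2.2}, \ref{Prop2.4} and Lemma \ref{Lemma2.3}, and it extends immediately to the general product formula (\ref{3.11}). What the paper's longer route buys is that it develops exactly those auxiliary identities (used again in Section \ref{Sec4}) and keeps the argument parallel to the proofs in \cite{Pi1} and \cite{Pi2} that it generalizes.
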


\begin{proof}
We will proceed by induction on the parameters $t_{1}$ and/or $t_{2}$. (As
in the proof of theorem \ref{Th3.1}, the symmetry of (\ref{3.5}) with
respect to $t_{1}$ and $t_{2}$ allows us to use induction on any of these
parameters.) The case $t_{1}=t_{2}=0$ is trivial and in the case $%
t_{1}=t_{2}=1$ the result is true by theorem \ref{Th3.1}. Suppose now the
result is true for a given $t_{1}\in \mathbb{N}$ together with all $t\in 
\mathbb{N}$, $t\leq t_{1}$, and let us prove that it is also true for $%
t_{1}+1$. We will show that%
\begin{eqnarray}
&&\mathcal{Z}\left( F_{\left( t_{1}+1\right) sn+m_{1}}\left( x,y\right)
F_{t_{2}sn+m_{2}}\left( x,y\right) \right)  \label{3.54} \\
&&  \notag \\
&=&z\frac{%
\begin{array}{c}
\sum\limits_{i=0}^{t_{1}+t_{2}+1}\sum\limits_{j=0}^{i}\left( -1\right) ^{%
\frac{\left( sj+2(s+1)\right) \left( j+1\right) }{2}}\dbinom{t_{1}+t_{2}+2}{j%
}_{F\!_{s}\!\left( x,y\right) } \\ 
\times F_{m_{1}+\left( t_{1}+1\right) s\left( i-j\right) }\left( x,y\right)
F_{m_{2}+t_{2}s\left( i-j\right) }\left( x,y\right) y^{\frac{sj\left(
j-1\right) }{2}}z^{t_{1}+t_{2}+1-i}%
\end{array}%
}{\sum\limits_{i=0}^{t_{1}+t_{2}+2}\left( -1\right) ^{\frac{\left(
si+2(s+1)\right) \left( i+1\right) }{2}}\dbinom{t_{1}+t_{2}+2}{i}%
_{F\!_{s}\!\left( x,y\right) }y^{\frac{si\left( i-1\right) }{2}%
}z^{t_{1}+t_{2}+2-i}}.  \notag
\end{eqnarray}

By using (\ref{1.11}) with $N=\left( t_{1}+1\right) sn+m_{1}$, $K=\left(
t_{1}-1\right) sn+m_{1}$ and $M=sn$, we see that%
\begin{equation*}
F_{\left( t_{1}+1\right) sn+m_{1}}\left( x,y\right) =F_{t_{1}sn+m_{1}}\left(
x,y\right) L_{sn}\left( x,y\right) -\left( -y\right) ^{sn}F_{\left(
t_{1}-1\right) sn+m_{1}}\left( x,y\right) ,
\end{equation*}

Then we have that%
\begin{eqnarray}
&&\mathcal{Z}\left( F_{\left( t_{1}+1\right) sn+m_{1}}\left( x,y\right)
F_{t_{2}sn+m_{2}}\left( x,y\right) \right)  \label{3.55} \\
&=&\mathcal{Z}\left( L_{sn}\left( x,y\right) F_{t_{1}sn+m_{1}}\left(
x,y\right) F_{t_{2}sn+m_{2}}\left( x,y\right) \right) -\mathcal{Z}\left(
\left( -y\right) ^{sn}F_{\left( t_{1}-1\right) sn+m_{1}}\left( x,y\right)
F_{t_{2}sn+m_{2}}\left( x,y\right) \right) .  \notag
\end{eqnarray}

Observe that induction hypothesis and (\ref{2.5}) give us that%
\begin{eqnarray*}
&&\mathcal{Z}\left( L_{sn}\left( x,y\right) F_{t_{1}sn+m_{1}}\left(
x,y\right) F_{t_{2}sn+m_{2}}\left( x,y\right) \right) \\
&& \\
&=&\frac{%
\begin{array}{c}
\frac{z}{\alpha ^{s}\left( x,y\right) }\!\sum\limits_{i=0}^{t_{1}+t_{2}}\!%
\sum\limits_{j=0}^{i}\!\!\left( -1\right) ^{\frac{\left( sj+2(s+1)\right)
\left( j+1\right) }{2}}\!\!\dbinom{t_{1}+t_{2}+1}{j}_{\!F\!_{s}\!\left(
x,y\right) } \\ 
\times F_{m_{1}+t_{1}s\left( i-j\right) }\left( x,y\right)
F_{m_{2}+t_{2}s\left( i-j\right) }\left( x,y\right) \!y^{\frac{sj\left(
j-1\right) }{2}}\!\left( \frac{z}{\alpha ^{s}\left( x,y\right) }\right)
^{t_{1}+t_{2}-i}%
\end{array}%
}{\sum\limits_{i=0}^{t_{1}+t_{2}+1}\left( -1\right) ^{\frac{\left(
si+2(s+1)\right) \left( i+1\right) }{2}}\dbinom{t_{1}+t_{2}+1}{i}%
_{F\!_{s}\!\left( x,y\right) }y^{\frac{si\left( i-1\right) }{2}}\left( \frac{%
z}{\alpha ^{s}\left( x,y\right) }\right) ^{t_{1}+t_{2}+1-i}} \\
&& \\
&&%
\begin{array}{ccccccccccccccc}
&  &  &  &  &  &  &  &  &  &  &  &  &  & +%
\end{array}
\\
&& \\
&&\frac{%
\begin{array}{c}
\frac{z}{\beta ^{s}\left( x,y\right) }\!\sum\limits_{i=0}^{t_{1}+t_{2}}\!%
\sum\limits_{j=0}^{i}\!\!\left( -1\right) ^{\frac{\left( sj+2(s+1)\right)
\left( j+1\right) }{2}}\!\!\dbinom{t_{1}+t_{2}+1}{j}_{\!F\!_{s}\!\left(
x,y\right) }\!\! \\ 
\times F_{m_{1}+t_{1}s\left( i-j\right) }\left( x,y\right)
F_{m_{2}+t_{2}s\left( i-j\right) }\left( x,y\right) \!y^{\frac{sj\left(
j-1\right) }{2}}\!\left( \frac{z}{\beta ^{s}\left( x,y\right) }\right)
^{t_{1}+t_{2}-i}%
\end{array}%
}{\sum\limits_{i=0}^{t_{1}+t_{2}+1}\left( -1\right) ^{\frac{\left(
si+2(s+1)\right) \left( i+1\right) }{2}}\dbinom{t_{1}+t_{2}+1}{i}%
_{F\!_{s}\!\left( x,y\right) }y^{\frac{si\left( i-1\right) }{2}}\left( \frac{%
z}{\beta ^{s}\left( x,y\right) }\right) ^{t_{1}+t_{2}+1-i}},
\end{eqnarray*}%
or, after some simplifications%
\begin{eqnarray}
&&\mathcal{Z}\left( L_{sn}\left( x,y\right) F_{t_{1}sn+m_{1}}\left(
x,y\right) F_{t_{2}sn+m_{2}}\left( x,y\right) \right)  \label{3.551} \\
&=&z\sum_{i=0}^{t_{1}+t_{2}}\sum_{j=0}^{i}\left( -1\right) ^{\frac{\left(
sj+2(s+1)\right) \left( j+1\right) }{2}}\dbinom{t_{1}+t_{2}+1}{j}%
_{F\!_{s}\!\left( x,y\right) }F_{m_{1}+t_{1}s\left( i-j\right) }\left(
x,y\right) F_{m_{2}+t_{2}s\left( i-j\right) }\left( x,y\right)  \notag \\
&&  \notag \\
&&\times \left( 
\begin{array}{c}
\dfrac{\alpha ^{si}\left( x,y\right) }{\sum\limits_{i=0}^{t_{1}+t_{2}+1}%
\left( -1\right) ^{\frac{\left( si+2(s+1)\right) \left( i+1\right) }{2}}%
\dbinom{t_{1}+t_{2}+1}{i}_{F\!_{s}\!\left( x,y\right) }\alpha ^{si}\left(
x,y\right) y^{\frac{si\left( i-1\right) }{2}}z^{t_{1}+t_{2}+1-i}} \\ 
+ \\ 
\text{ }\dfrac{\beta ^{si}\left( x,y\right) }{\sum%
\limits_{i=0}^{t_{1}+t_{2}+1}\left( -1\right) ^{\frac{\left(
si+2(s+1)\right) \left( i+1\right) }{2}}\dbinom{t_{1}+t_{2}+1}{i}%
_{F\!_{s}\!\left( x,y\right) }\beta ^{si}\left( x,y\right) y^{\frac{si\left(
i-1\right) }{2}}z^{t_{1}+t_{2}+1-i}}%
\end{array}%
\right) y^{\frac{sj\left( j-1\right) }{2}}z^{t_{1}+t_{2}-i}.  \notag
\end{eqnarray}

According to (\ref{2.24}) we can write (\ref{3.551}) as%
\begin{eqnarray}
&&\mathcal{Z}\left( L_{sn}\left( x,y\right) F_{t_{1}sn+m_{1}}\left(
x,y\right) F_{t_{2}sn+m_{2}}\left( x,y\right) \right)  \label{3.552} \\
&=&\frac{z}{\sum\limits_{i=0}^{t_{1}+t_{2}+2}\left( -1\right) ^{\frac{\left(
si+2(s+1)\right) \left( i+1\right) }{2}}\dbinom{t_{1}+t_{2}+2}{i}%
_{F\!_{s}\!\left( x,y\right) }y^{\frac{si\left( i-1\right) }{2}%
}z^{t_{1}+t_{2}+2-i}}  \notag \\
&&\times \sum_{i=0}^{t_{1}+t_{2}}\sum_{j=0}^{i}\left( -1\right) ^{\frac{%
\left( sj+2(s+1)\right) \left( j+1\right) }{2}}\dbinom{t_{1}+t_{2}+1}{j}%
_{F\!_{s}\!\left( x,y\right) }F_{m_{1}+t_{1}s\left( i-j\right) }\left(
x,y\right) F_{m_{2}+t_{2}s\left( i-j\right) }\left( x,y\right)  \notag \\
&&\times \left( L_{si}\left( x,y\right) z-\left( -y\right) ^{si}L_{s\left(
t_{1}+t_{2}-i+1\right) }\left( x,y\right) \right) y^{\frac{sj\left(
j-1\right) }{2}}z^{t_{1}+t_{2}-i}  \notag
\end{eqnarray}

On the other hand, induction hypothesis together with (\ref{2.2}) give us%
\begin{eqnarray*}
&&\mathcal{Z}\left( \left( -y\right) ^{sn}F_{\left( t_{1}-1\right)
sn+m_{1}}\left( x,y\right) F_{t_{2}sn+m_{2}}\left( x,y\right) \right) \\
&& \\
&=&\frac{%
\begin{array}{c}
\frac{z}{\left( -y\right) ^{s}}\!\sum\limits_{i=0}^{t_{1}+t_{2}-1}\!\sum%
\limits_{j=0}^{i}\!\!\left( -1\right) ^{\frac{\left( sj+2(s+1)\right) \left(
j+1\right) }{2}}\!\!\dbinom{t_{1}+t_{2}}{j}_{\!F\!_{s}\!\left( x,y\right)
}\!\! \\ 
\times F_{m_{1}+\left( t_{1}-1\right) s\left( i-j\right) }\left( x,y\right)
F_{m_{2}+t_{2}s\left( i-j\right) }\left( x,y\right) \!y^{\frac{sj\left(
j-1\right) }{2}}\!\left( \frac{z}{\left( -y\right) ^{s}}\right)
^{t_{1}-1+t_{2}-i}%
\end{array}%
}{\sum\limits_{i=0}^{t_{1}+t_{2}}\left( -1\right) ^{\frac{\left(
si+2(s+1)\right) \left( i+1\right) }{2}}\dbinom{t_{1}+t_{2}}{i}%
_{F\!_{s}\!\left( x,y\right) }y^{\frac{si\left( i-1\right) }{2}}\left( \frac{%
z}{\left( -y\right) ^{s}}\right) ^{t_{1}+t_{2}-i}},
\end{eqnarray*}%
or%
\begin{eqnarray}
&&\mathcal{Z}\left( \left( -y\right) ^{sn}F_{\left( t_{1}-1\right)
sn+m_{1}}\left( x,y\right) F_{t_{2}sn+m_{2}}\left( x,y\right) \right)
\label{3.553} \\
&&  \notag \\
&=&z\frac{%
\begin{array}{c}
\sum\limits_{i=0}^{t_{1}+t_{2}-1}\sum\limits_{j=0}^{i}\!\left( -1\right) ^{%
\frac{\left( sj+2(s+1)\right) \left( j+1\right) }{2}}\!\dbinom{t_{1}+t_{2}}{j%
}_{F\!_{s}\!\left( x,y\right) }\! \\ 
\times F_{m_{1}+\left( t_{1}-1\right) s\left( i-j\right) }\left( x,y\right)
F_{m_{2}+t_{2}s\left( i-j\right) }\left( x,y\right) \!\left( -y\right)
^{si}\!y^{\frac{sj\left( j-1\right) }{2}}z^{t_{1}-1+t_{2}-i}%
\end{array}%
}{\sum\limits_{i=0}^{t_{1}+t_{2}}\left( -1\right) ^{\frac{\left(
si+2(s+1)\right) \left( i+1\right) }{2}}\dbinom{t_{1}+t_{2}}{i}%
_{F\!_{s}\!\left( x,y\right) }\left( -y\right) ^{si}y^{\frac{si\left(
i-1\right) }{2}}z^{t_{1}+t_{2}-i}}.  \notag
\end{eqnarray}

We can use (\ref{2.27}) to write (\ref{3.553}) as%
\begin{eqnarray}
&&\mathcal{Z}\left( \left( -y\right) ^{sn}F_{\left( t_{1}-1\right)
sn+m_{1}}\left( x,y\right) F_{t_{2}sn+m_{2}}\left( x,y\right) \right)
\label{3.554} \\
&&  \notag \\
&=&z\frac{%
\begin{array}{c}
\sum\limits_{i=0}^{t_{1}+t_{2}-1}\sum\limits_{j=0}^{i}\!\left( -1\right) ^{%
\frac{\left( sj+2(s+1)\right) \left( j+1\right) }{2}}\!\dbinom{t_{1}+t_{2}}{j%
}_{F\!_{s}\!\left( x,y\right) }\!F_{m_{1}+\left( t_{1}-1\right) s\left(
i-j\right) }\left( x,y\right) F_{m_{2}+t_{2}s\left( i-j\right) }\left(
x,y\right) \! \\ 
\times \left( z^{2}-L_{s\left( t_{1}+t_{2}+1\right) }\left( x,y\right)
z+\left( -y\right) ^{s\left( t_{1}+t_{2}+1\right) }\right) \left( -y\right)
^{si}\!y^{\frac{sj\left( j-1\right) }{2}}z^{t_{1}-1+t_{2}-i}%
\end{array}%
}{\sum\limits_{i=0}^{t_{1}+t_{2}+2}\left( -1\right) ^{\frac{\left(
si+2(s+1)\right) \left( i+1\right) }{2}}\dbinom{t_{1}+t_{2}+2}{i}%
_{F\!_{s}\!\left( x,y\right) }y^{\frac{si\left( i-1\right) }{2}%
}z^{t_{1}+t_{2}+2-i}}.  \notag
\end{eqnarray}

Thus, with (\ref{3.552}) and (\ref{3.554}) we can write (\ref{3.55}) as%
\begin{eqnarray}
&&\mathcal{Z}\left( F_{\left( t_{1}+1\right) sn+m_{1}}\left( x,y\right)
F_{t_{2}sn+m_{2}}\left( x,y\right) \right)  \label{3.7} \\
&&  \notag \\
&=&\frac{z}{\sum\limits_{i=0}^{t_{1}+t_{2}+2}\left( -1\right) ^{\frac{\left(
si+2(s+1)\right) \left( i+1\right) }{2}}\dbinom{t_{1}+t_{2}+2}{i}%
_{F\!_{s}\!\left( x,y\right) }y^{\frac{si\left( i-1\right) }{2}%
}z^{t_{1}+t_{2}+2-i}}  \notag \\
&&  \notag \\
&&\times \left( 
\begin{array}{c}
\sum\limits_{i=0}^{t_{1}+t_{2}}\sum\limits_{j=0}^{i}\left( -1\right) ^{\frac{%
\left( sj+2(s+1)\right) \left( j+1\right) }{2}}\dbinom{t_{1}+t_{2}+1}{j}%
_{F\!_{s}\!\left( x,y\right) }F_{m_{1}+t_{1}s\left( i-j\right) }\left(
x,y\right) F_{m_{2}+t_{2}s\left( i-j\right) }\left( x,y\right) \\ 
\times \left( L_{si}\left( x,y\right) z-\left( -y\right) ^{si}L_{s\left(
t_{1}+t_{2}-i+1\right) }\left( x,y\right) \right) y^{\frac{sj\left(
j-1\right) }{2}}z^{t_{1}+t_{2}-i} \\ 
\\ 
-\sum\limits_{i=0}^{t_{1}+t_{2}-1}\sum\limits_{j=0}^{i}\left( -1\right) ^{%
\frac{\left( sj+2(s+1)\right) \left( j+1\right) }{2}}\dbinom{t_{1}+t_{2}}{j}%
_{F\!_{s}\!\left( x,y\right) }F_{m_{1}+\left( t_{1}-1\right) s\left(
i-j\right) }\left( x,y\right) F_{m_{2}+t_{2}s\left( i-j\right) }\left(
x,y\right) \\ 
\times \left( z^{2}-L_{s\left( t_{1}+t_{2}+1\right) }\left( x,y\right)
z+\left( -y\right) ^{s\left( t_{1}+t_{2}+1\right) }\right) \left( -y\right)
^{si}y^{\frac{sj\left( j-1\right) }{2}}z^{t_{1}-1+t_{2}-i}%
\end{array}%
\right) .  \notag
\end{eqnarray}%
We have now the expected denominator (of (\ref{3.54})). Let us work with the
corresponding numerator (of (\ref{3.7})), $z\left( A\left( x,y;z\right)
-B\left( x,y;z\right) \right) $ say, where%
\begin{eqnarray*}
A\left( x,y;z\right)
&=&\sum\limits_{i=0}^{t_{1}+t_{2}}\sum\limits_{j=0}^{i}\left( -1\right) ^{%
\frac{\left( sj+2(s+1)\right) \left( j+1\right) }{2}}\dbinom{t_{1}+t_{2}+1}{j%
}_{F\!_{s}\!\left( x,y\right) }F_{m_{1}+t_{1}s\left( i-j\right) }\left(
x,y\right) F_{m_{2}+t_{2}s\left( i-j\right) }\left( x,y\right) \\
&&\times \left( L_{si}\left( x,y\right) z-\left( -y\right) ^{si}L_{s\left(
t_{1}+t_{2}-i+1\right) }\left( x,y\right) \right) y^{\frac{sj\left(
j-1\right) }{2}}z^{t_{1}+t_{2}-i},
\end{eqnarray*}%
and%
\begin{eqnarray*}
B\left( x,y;z\right)
&=&\sum\limits_{i=0}^{t_{1}+t_{2}-1}\sum\limits_{j=0}^{i}\left( -1\right) ^{%
\frac{\left( sj+2(s+1)\right) \left( j+1\right) }{2}}\dbinom{t_{1}+t_{2}}{j}%
_{F\!_{s}\!\left( x,y\right) }F_{m_{1}+\left( t_{1}-1\right) s\left(
i-j\right) }\left( x,y\right) F_{m_{2}+t_{2}s\left( i-j\right) }\left(
x,y\right) \\
&&\times \left( -y\right) ^{si}y^{\frac{sj\left( j-1\right) }{2}%
}z^{t_{1}-1+t_{2}-i}\left( z^{2}-L_{s\left( t_{1}+t_{2}+1\right) }\left(
x,y\right) z+\left( -y\right) ^{s\left( t_{1}+t_{2}+1\right) }\right)
\end{eqnarray*}

We have that%
\begin{eqnarray*}
&&A\left( x,y;z\right) \\
&=&\sum_{i=0}^{t_{1}+t_{2}}\sum_{j=0}^{i}\!\left( -1\right) ^{\frac{\left(
sj+2(s+1)\right) \left( j+1\right) }{2}}\!\!\dbinom{t_{1}+t_{2}+1}{j}%
_{\!F\!_{s}\!\left( x,y\right) }\!\! \\
&&\times F_{m_{1}+t_{1}s\left( i-j\right) }\left( x,y\right)
F_{m_{2}+t_{2}s\left( i-j\right) }\left( x,y\right) L_{si}\left( x,y\right)
y^{\frac{sj\left( j-1\right) }{2}}z^{t_{1}+t_{2}+1-i} \\
&& \\
&&-\sum_{i=1}^{t_{1}+t_{2}+1}\sum_{j=1}^{i}\!\left( -1\right) ^{\frac{\left(
sj-s+2(s+1)\right) j}{2}}\!\!\dbinom{t_{1}+t_{2}+1}{j-1}_{\!F\!_{s}\!\left(
x,y\right) }\!\! \\
&&\times F_{m_{1}+t_{1}s\left( i-j\right) }\left( x,y\right)
F_{m_{2}+t_{2}s\left( i-j\right) }\left( x,y\right) L_{s\left(
t_{1}+t_{2}-i+2\right) }\left( x,y\right) \left( -y\right) ^{s\left(
i-1\right) }y^{\frac{s\left( j-1\right) \left( j-2\right) }{2}%
}z^{t_{1}+t_{2}+1-i} \\
&& \\
&=&\sum_{i=0}^{t_{1}+t_{2}+1}\sum_{j=0}^{i}\left( -1\right) ^{\frac{\left(
sj+2(s+1)\right) \left( j+1\right) }{2}}\dbinom{t_{1}+t_{2}+2}{j}%
_{F\!_{s}\!\left( x,y\right) }\frac{F_{m_{2}+t_{2}s\left( i-j\right) }\left(
x,y\right) F_{m_{1}+t_{1}s\left( i-j\right) }\left( x,y\right) }{F_{s\left(
t_{1}+t_{2}+2\right) }\left( x,y\right) }y^{\frac{sj\left( j-1\right) }{2}}
\\
&&\times \left( F_{s\left( t_{1}+t_{2}+2-j\right) }\left( x,y\right)
L_{si}\left( x,y\right) +\left( -y\right) ^{s\left( i-j\right) }F_{sj}\left(
x,y\right) L_{s\left( t_{1}+t_{2}-i+2\right) }\left( x,y\right) \right)
z^{t_{1}+1+t_{2}-i} \\
&=&\sum_{i=0}^{t_{1}+t_{2}+1}\sum_{j=0}^{i}\left( -1\right) ^{\frac{\left(
sj+2(s+1)\right) \left( j+1\right) }{2}}\dbinom{t_{1}+t_{2}+2}{j}%
_{F\!_{s}\!\left( x,y\right) } \\
&&\times F_{m_{2}+t_{2}s\left( i-j\right) }\left( x,y\right)
F_{m_{1}+t_{1}s\left( i-j\right) }\left( x,y\right) L_{s\left( i-j\right)
}\left( x,y\right) y^{\frac{sj\left( j-1\right) }{2}}z^{t_{1}+1+t_{2}-i}.
\end{eqnarray*}

In the last step we used (\ref{1.12}) with $M=si,N=s\left(
t_{1}+t_{2}+2-j\right) $ and $K=-sj$.

Now let us work with $B\left( x,y;z\right) $. We have%
\begin{eqnarray*}
&&B\left( x,y;z\right) \\
&=&\sum\limits_{i=0}^{t_{1}+t_{2}-1}\sum\limits_{j=0}^{i}\!\left( -1\right)
^{\frac{\left( sj+2(s+1)\right) \left( j+1\right) }{2}}\!\dbinom{t_{1}+t_{2}%
}{j}_{\!F_{s}\!\left( x,y\right) }\! \\
&&\times F_{m_{1}+\left( t_{1}-1\right) s\left( i-j\right) }\left(
x,y\right) F_{m_{2}+t_{2}s\left( i-j\right) }\left( x,y\right) \left(
-y\right) ^{si}y^{\frac{sj\left( j-1\right) }{2}}z^{t_{1}+1+t_{2}-i} \\
&& \\
&&-\!\sum\limits_{i=1}^{t_{1}+t_{2}}\sum\limits_{j=1}^{i}\!\left( -1\right)
^{\frac{\left( s\left( j-1\right) +2(s+1)\right) j}{2}}\!\!\dbinom{%
t_{1}+t_{2}}{j-1}_{\!F_{s}\!\left( x,y\right) }\!\! \\
&&\times F_{m_{1}+\left( t_{1}-1\right) s\left( i-j\right) }\left(
x,y\right) F_{m_{2}+t_{2}s\left( i-j\right) }\left( x,y\right) L_{s\left(
t_{1}+t_{2}+1\right) }\left( x,y\right) \left( -y\right) ^{s\left(
i-1\right) }y^{\frac{s\left( j-1\right) \left( j-2\right) }{2}%
}z^{t_{1}+1+t_{2}-i} \\
&& \\
&&+\!\sum\limits_{i=2}^{t_{1}+t_{2}+1}\!\sum\limits_{j=2}^{i}\!\left(
\!-1\right) ^{\frac{\left( s\left( j-2\right) +2(s+1)\right) \left(
j-1\right) }{2}}\!\!\dbinom{t_{1}+t_{2}}{j-2}_{\!\!F_{s}\!\left( x,y\right)
}\!\!\! \\
&&\times \left( F_{m_{1}+\left( t_{1}-1\right) s\left( i-j\right) }\left(
x,y\right) F_{m_{2}+t_{2}s\left( i-j\right) }\left( x,y\right) \right) y^{%
\frac{s\left( j-2\right) \left( j-3\right) }{2}}\left( -y\right) ^{s\left(
i-2\right) }\left( -y\right) ^{s\left( t_{1}+t_{2}+1\right)
}z^{t_{1}+1+t_{2}-i} \\
&& \\
&=&\sum_{i=0}^{t_{1}+t_{2}+1}\sum_{j=0}^{i}\left( -1\right) ^{\frac{\left(
sj+2(s+1)\right) \left( j+1\right) }{2}}\dbinom{t_{1}+t_{2}+2}{j}%
_{F\!_{s}\!\left( x,y\right) }\frac{F_{m_{2}+t_{2}s\left( i-j\right) }\left(
x,y\right) F_{m_{1}+\left( t_{1}-1\right) s\left( i-j\right) }\left(
x,y\right) }{F_{s\left( t_{1}+t_{2}+2\right) }\left( x,y\right) F_{s\left(
t_{1}+t_{2}+1\right) }\left( x,y\right) } \\
&&\times \left( -y\right) ^{s\left( i-j\right) }\left( 
\begin{array}{c}
\left( -y\right) ^{sj}F_{s\left( t_{1}+t_{2}+1-j\right) }\left( x,y\right)
F_{s\left( t_{1}+t_{2}+2-j\right) }\left( x,y\right) \\ 
+F_{sj}\left( x,y\right) F_{s\left( t_{1}+t_{2}+2-j\right) }\left(
x,y\right) L_{s\left( t_{1}+t_{2}+1\right) }\left( x,y\right) \\ 
+\left( -y\right) ^{s\left( t_{1}+t_{2}+2-j\right) }F_{sj}\left( x,y\right)
F_{s\left( j-1\right) }\left( x,y\right)%
\end{array}%
\right) y^{\frac{sj\left( j-1\right) }{2}}z^{t_{1}+1+t_{2}-i}
\end{eqnarray*}

Lemma \ref{Lemma2.3} allows us to write%
\begin{eqnarray*}
B\left( x,y;z\right) &=&\!\sum_{i=0}^{t_{1}+t_{2}+1}\!\sum_{j=0}^{i}\!\left(
-1\right) ^{\frac{\left( sj+2(s+1)\right) \left( j+1\right) }{2}}\!\dbinom{%
t_{1}+t_{2}+2}{j}_{\!F\!_{s}\!\left( x,y\right) }\!\! \\
&&\times F_{m_{2}+t_{2}s\left( i-j\right) }\left( x,y\right) F_{m_{1}+\left(
t_{1}-1\right) s\left( i-j\right) }\left( x,y\right) \left( -y\right)
^{s\left( i-j\right) }y^{\frac{sj\left( j-1\right) }{2}}z^{t_{1}+1+t_{2}-i}.
\end{eqnarray*}

Thus, the numerator of (\ref{3.7}) is%
\begin{eqnarray}
&&z\left( A\left( x,y;z\right) -B\left( x,y;z\right) \right)  \notag \\
&=&z\sum_{i=0}^{t_{1}+t_{2}+1}\sum_{j=0}^{i}\left( -1\right) ^{\frac{\left(
sj+2(s+1)\right) \left( j+1\right) }{2}}\dbinom{t_{1}+t_{2}+2}{j}%
_{F\!_{s}\!\left( x,y\right) }  \notag \\
&&\times F_{m_{2}+t_{2}s\left( i-j\right) }\left( x,y\right)
F_{m_{1}+t_{1}s\left( i-j\right) }\left( x,y\right) L_{s\left( i-j\right)
}\left( x,y\right) y^{\frac{sj\left( j-1\right) }{2}}z^{t_{1}+1+t_{2}-i} 
\notag \\
&&-z\sum_{i=0}^{t_{1}+t_{2}+1}\!\sum_{j=0}^{i}\!\left( -1\right) ^{\frac{%
\left( sj+2(s+1)\right) \left( j+1\right) }{2}}\!\dbinom{t_{1}+t_{2}+2}{j}%
_{\!F\!_{s}\!\left( x,y\right) }\!\!  \notag \\
&&\times F_{m_{2}+t_{2}s\left( i-j\right) }\left( x,y\right) F_{m_{1}+\left(
t_{1}-1\right) s\left( i-j\right) }\left( x,y\right) \left( -y\right)
^{s\left( i-j\right) }y^{\frac{sj\left( j-1\right) }{2}}z^{t_{1}+1+t_{2}-i} 
\notag \\
&=&z\sum_{i=0}^{t_{1}+t_{2}+1}\sum_{j=0}^{i}\left( -1\right) ^{\frac{\left(
sj+2(s+1)\right) \left( j+1\right) }{2}}\dbinom{t_{1}+t_{2}+2}{j}%
_{F\!_{s}\!\left( x,y\right) }F_{m_{2}+t_{2}s\left( i-j\right) }\left(
x,y\right)  \notag \\
&&\times \left( F_{m_{1}+t_{1}s\left( i-j\right) }\left( x,y\right)
L_{s\left( i-j\right) }\left( x,y\right) -\left( -y\right) ^{s\left(
i-j\right) }F_{m_{1}+\left( t_{1}-1\right) s\left( i-j\right) }\left(
x,y\right) \right) y^{\frac{sj\left( j-1\right) }{2}}z^{t_{1}+1+t_{2}-i}.
\label{3.8}
\end{eqnarray}

Finally, by using (\ref{1.11}) with $N=m_{1}+t_{1}s\left( i-j\right) $, $%
M=2s\left( i-j\right) $ and $K=-s\left( i-j\right) $ we see that 
\begin{equation*}
F_{m_{1}+t_{1}s\left( i-j\right) }\left( x,y\right) L_{s\left( i-j\right)
}\left( x,y\right) -\left( -y\right) ^{s\left( i-j\right) }F_{m_{1}+\left(
t_{1}-1\right) s\left( i-j\right) }\left( x,y\right) =F_{m_{1}+\left(
t_{1}+1\right) s\left( i-j\right) }\left( x,y\right) ,
\end{equation*}%
and then (\ref{3.8}) becomes the numerator of (\ref{3.54}), as wanted.
\end{proof}

By using the same sort of arguments of the proofs of theorems \ref{Th3.1}
and \ref{Th3.2}, one can prove the natural generalization of these theorems,
namely%
\begin{eqnarray}
&&\mathcal{Z}\left( F_{t_{1}sn+m_{1}}^{k_{1}}\left( x,y\right) \cdots
F_{t_{l}sn+m_{l}}^{k_{l}}\left( x,y\right) \right)  \label{3.11} \\
&&  \notag \\
&=&z\frac{%
\begin{array}{c}
\sum\limits_{i=0}^{k_{1}t_{1}+\cdots
+k_{l}t_{l}}\!\sum\limits_{j=0}^{i}\left( -1\right) ^{\frac{\left(
sj+2(s+1)\right) \left( j+1\right) }{2}}\!\dbinom{k_{1}t_{1}+\cdots
+k_{l}t_{l}+1}{j}_{F\!_{s}\!\left( x,y\right) }\! \\ 
\\ 
\times F_{m_{1}+t_{1}s\left( i-j\right) }^{k_{1}}\!\left( x,y\right) \cdots
\!F_{m_{l}+t_{l}s\left( i-j\right) }^{k_{l}}\left( x,y\right) y^{\frac{%
sj\left( j-1\right) }{2}}z^{k_{1}t_{1}+\cdots +k_{l}t_{l}-i}%
\end{array}%
}{\sum\limits_{i=0}^{k_{1}t_{1}+\cdots +k_{l}t_{l}+1}\left( -1\right) ^{%
\frac{\left( si+2(s+1)\right) \left( i+1\right) }{2}}\dbinom{%
k_{1}t_{1}+\cdots +k_{l}t_{l}+1}{i}_{F\!_{s}\!\left( x,y\right) }\!y^{\frac{%
si\left( i-1\right) }{2}}z^{k_{1}t_{1}+\cdots +k_{l}t_{l}+1-i}}.  \notag
\end{eqnarray}

\section{\label{Sec4}Some corollaries}

In this section we will obtain some consequences of (\ref{3.11}).

\begin{corollary}
\label{Cor4.1}\textit{For }$p\in \mathbb{N}^{\prime }$\textit{\ given, the }$%
Z$\textit{\ transform of the sequence }$\binom{n}{p}_{F_{s}\left( x,y\right)
}$\textit{\ is}%
\begin{eqnarray}
\mathcal{Z}\left( \dbinom{n}{p}_{F_{s}\left( x,y\right) }\right) &=&\frac{%
\left( -1\right) ^{s+1}z}{D_{s,p+1}\left( x,y;z\right) }  \label{4.1} \\
&=&\frac{\left( -1\right) ^{s+1}z}{\sum_{i=0}^{p+1}\left( -1\right) ^{\frac{%
\left( si+2\left( s+1\right) \right) \left( i+1\right) }{2}}\dbinom{p+1}{i}%
_{F_{s}\left( x,y\right) }y^{\frac{si\left( i-1\right) }{2}}z^{p+1-i}}. 
\notag
\end{eqnarray}
\end{corollary}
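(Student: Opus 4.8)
The plan is to read (\ref{4.1}) directly off the master formula (\ref{3.11}), after observing that $\binom{n}{p}_{F_{s}(x,y)}$ is, up to a constant factor, a product of Fibonacci polynomials of exactly the type that (\ref{3.11}) evaluates. Fix $p\ge 1$. From the definition (\ref{1.14}) of the bivariate $s$-Fibopolynomial one has, for every $n\in\mathbb{N}^{\prime}$,
\[
\binom{n}{p}_{F_{s}(x,y)}=\Bigl(\prod_{m=1}^{p}F_{sm}(x,y)\Bigr)^{-1}\prod_{r=0}^{p-1}F_{s(n-r)}(x,y),
\]
and this is an identity of polynomials in $x,y$: for $0\le n<p$ the factor $F_{0}(x,y)=0$ occurs on the right, matching $\binom{n}{p}_{F_{s}(x,y)}=0$, so it is a genuine identity of sequences to which $\mathcal{Z}$ may be applied termwise. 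Writing $\prod_{r=0}^{p-1}F_{s(n-r)}(x,y)=\prod_{\ell=1}^{p}F_{sn+m_{\ell}}(x,y)$ with $m_{\ell}=s-s\ell$, this is exactly the left-hand side of (\ref{3.11}) with $l=p$, all $t_{\ell}=1$ and all $k_{\ell}=1$; since $\sum_{\ell}k_{\ell}t_{\ell}=p$, the denominator (\ref{3.11}) produces is $\sum_{i=0}^{p+1}(-1)^{(si+2(s+1))(i+1)/2}\binom{p+1}{i}_{F_{s}(x,y)}y^{si(i-1)/2}z^{p+1-i}$, which is precisely $D_{s,p+1}(x,y;z)$ by Proposition~\ref{Prop2.1} (i.e.\ by (\ref{2.16})).

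It then remains to simplify the numerator that (\ref{3.11}) places over $D_{s,p+1}(x,y;z)$, which in our case is
\[
z\sum_{i=0}^{p}\sum_{j=0}^{i}(-1)^{\frac{(sj+2(s+1))(j+1)}{2}}\binom{p+1}{j}_{F_{s}(x,y)}\Bigl(\prod_{r=0}^{p-1}F_{s(i-j-r)}(x,y)\Bigr)y^{\frac{sj(j-1)}{2}}z^{p-i}.
\]
The crucial point I would exploit is that, for an admissible pair $(i,j)$ with $0\le j\le i\le p$, the integers $i-j-r$ with $r=0,\dots,p-1$ form a block of $p$ consecutive integers, largest $i-j$ and smallest $i-j-p+1$; this block contains $0$ exactly when $j\le i\le j+p-1$, and that holds for every admissible $(i,j)$ except $(i,j)=(p,0)$. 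Hence in all terms other than the one with $i=p$, $j=0$ the factor $F_{0}(x,y)=0$ appears and the whole term vanishes, while the surviving term contributes $(-1)^{s+1}\binom{p+1}{0}_{F_{s}(x,y)}\prod_{r=0}^{p-1}F_{s(p-r)}(x,y)=(-1)^{s+1}\prod_{m=1}^{p}F_{sm}(x,y)$, with the remaining power of $z$ equal to $z^{0}$. Therefore $\mathcal{Z}\bigl(\prod_{r=0}^{p-1}F_{s(n-r)}(x,y)\bigr)$ equals $(-1)^{s+1}z\prod_{m=1}^{p}F_{sm}(x,y)$ divided by $D_{s,p+1}(x,y;z)$; dividing by the constant $\prod_{m=1}^{p}F_{sm}(x,y)$ and using linearity of $\mathcal{Z}$ gives the first equality of (\ref{4.1}), and the second equality is just the definition of $D_{s,p+1}(x,y;z)$. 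The case $p=0$ is immediate, since $\binom{n}{0}_{F_{s}(x,y)}=1$, $\mathcal{Z}(1)=z/(z-1)$ by (\ref{2.7}), and $D_{s,1}(x,y;z)=(-1)^{s+1}(z-1)$.

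The proof is essentially bookkeeping, so the step that deserves real care is the vanishing claim: I would spell out the block $\{i-j-p+1,\dots,i-j\}$ explicitly and verify the equivalence ``$0$ lies in this block if and only if $j\le i\le j+p-1$'', so as to pin down that $(p,0)$ is the unique surviving index. A secondary point worth making airtight is that the product representation of $\binom{n}{p}_{F_{s}(x,y)}$ is valid for the small indices $0\le n<p$ as well, which is what legitimizes applying $\mathcal{Z}$ to it.
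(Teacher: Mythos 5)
Your proof is correct and follows essentially the same route as the paper's: factor out $\left( F_{p}\left( x,y\right)!\right)_{s}$, apply (\ref{3.11}) with $l=p$, $t_{\ell}=k_{\ell}=1$ and shifts $m_{\ell}=s(1-\ell)$, and observe that every numerator term except $(i,j)=(p,0)$ contains the factor $F_{0}\left( x,y\right)=0$, leaving $(-1)^{s+1}\left( F_{p}\left( x,y\right)!\right)_{s}z$ over $D_{s,p+1}\left( x,y;z\right)$. Your explicit justification of the vanishing claim and the separate check of $p=0$ only make the paper's brief argument more airtight.
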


\begin{proof}
We have%
\begin{eqnarray}
&&\mathcal{Z}\left( \dbinom{n}{p}_{F_{s}\left( x,y\right) }\right)  \notag \\
&=&\frac{1}{\left( F_{p}\left( x,y\right) !\right) _{s}}\mathcal{Z}\left(
F_{s\left( n-p+1\right) }\left( x,y\right) F_{s\left( n-p+2\right) }\left(
x,y\right) \cdots F_{sn}\left( x,y\right) \right)  \notag \\
&=&\frac{z}{\left( F_{p}\left( x,y\right) !\right)
_{s}\sum\limits_{i=0}^{p+1}\left( -1\right) ^{\frac{\left( si+2\left(
s+1\right) \right) \left( i+1\right) }{2}}\dbinom{p+1}{i}_{F_{s}\left(
x,y\right) }z^{p+1-i}}  \notag \\
&&\!\!\!\times \!\sum\limits_{i=0}^{p}\sum\limits_{j=0}^{i}\!\left(
\!-1\right) ^{\frac{\left( sj+2\left( s+1\right) \right) \left( j+1\right) }{%
2}}\!\dbinom{p+1}{j}_{\!\!F_{s}\left( x,y\right) }\!\!F_{s\left(
1-p+i-j\right) }\!\left( x,y\right) \!F_{s\left( 2-p+i-j\right) }\!\left(
x,y\right) \cdots F_{s\left( i-j\right) }\!\left( x,y\right) \!z^{p-i}.
\label{4.15}
\end{eqnarray}

But the product $F_{s\left( 1-p+i-j\right) }\left( x,y\right) F_{s\left(
2-p+i-j\right) }\left( x,y\right) \cdots F_{s\left( i-j\right) }\left(
x,y\right) $ is different from zero if and only if $i=p$ and $j=0$. In such
a case that product is $\left( F_{p}\left( x,y\right) !\right) _{s}$ and the
numerator of (\ref{4.15}) reduces to $\left( -1\right) ^{s+1}\left(
F_{p}\left( x,y\right) !\right) _{s}z$. Thus (\ref{4.1}) follows.
\end{proof}

In the rest of this section we will be using extensively (\ref{4.1}), most
of times in its shifted version: according to (\ref{2.1}), if $0\leq
p_{0}\leq p$, we have that%
\begin{equation}
\mathcal{Z}\left( \dbinom{n+p_{0}}{p}_{F_{s}\left( x,y\right) }\right)
=z^{p_{0}}\frac{\left( -1\right) ^{s+1}z}{\sum_{i=0}^{p+1}\left( -1\right) ^{%
\frac{\left( si+2(s+1)\right) \left( i+1\right) }{2}}\dbinom{p+1}{i}%
_{F_{s}\left( x,y\right) }y^{\frac{si\left( i-1\right) }{2}}z^{p+1-i}}.
\label{4.2}
\end{equation}

By using that%
\begin{equation*}
G_{sn+m}\left( x,y\right) =yG_{m}\left( x,y\right) F_{sn-1}\left( x,y\right)
+G_{m+1}\left( x,y\right) F_{sn}\left( x,y\right) ,
\end{equation*}%
together with a simple linearity argument, we can see that formula (\ref%
{3.11}) is valid for bivariate Gibonacci polynomials $G_{n}\left( x,y\right) 
$ replacing the bivariate Fibonacci polynomials $F_{n}\left( x,y\right) $.
That is, we have%
\begin{eqnarray}
&&\mathcal{Z}\left( G_{st_{1}n+m_{1}}^{k_{1}}\left( x,y\right) \cdots
G_{st_{l}n+m_{l}}^{k_{l}}\left( x,y\right) \right)  \label{4.3} \\
&&  \notag \\
&=&\frac{%
\begin{array}{c}
z\sum\limits_{i=0}^{t_{1}k_{1}+\cdots
+t_{l}k_{l}}\!\sum\limits_{j=0}^{i}\!\left( -1\right) ^{\frac{\left(
sj+2(s+1)\right) \left( j+1\right) }{2}}\!\!\dbinom{t_{1}k_{1}+\cdots
+t_{l}k_{l}+1}{j}_{F_{s}\left( x,y\right) }\!\! \\ 
\times G_{m_{1}+st_{1}\left( i-j\right) }^{k_{1}}\left( x,y\right) \cdots
G_{m_{l}+st_{l}\left( i-j\right) }^{k_{l}}\left( x,y\right) y^{\frac{%
sj\left( j-1\right) }{2}}z^{t_{1}k_{1}+\cdots +t_{l}k_{l}-i}%
\end{array}%
}{\sum\limits_{i=0}^{t_{1}k_{1}+\cdots +t_{l}k_{l}+1}\left( -1\right) ^{%
\frac{\left( si+2(s+1)\right) \left( i+1\right) }{2}}\dbinom{%
t_{1}k_{1}+\cdots +t_{l}k_{l}+1}{i}_{F_{s}\left( x,y\right) }y^{\frac{%
si\left( i-1\right) }{2}}z^{t_{1}k_{1}+\cdots +t_{l}k_{l}+1-i}}.  \notag
\end{eqnarray}

\begin{corollary}
\label{Cor4.2}Let $k_{1},\ldots ,k_{l},t_{1},\ldots ,t_{l}\in \mathbb{N}%
^{\prime }$ and $m_{1},\ldots ,m_{l}\in \mathbb{Z}$ be given. The product of
bivariate $s$-Gibonacci polynomials $G_{st_{1}n+m_{1}}^{k_{1}}\left(
x,y\right) \cdots G_{st_{l}n+m_{l}}^{k_{l}}\left( x,y\right) $ can be
written as a linear combination of bivariate $s$-Fibopolynomials according
to 
\begin{eqnarray}
&&G_{st_{1}n+m_{1}}^{k_{1}}\left( x,y\right) \cdots
G_{st_{l}n+m_{l}}^{k_{l}}\left( x,y\right)  \label{4.4} \\
&=&\!\left( -1\right) ^{s+1}\!\!\sum\limits_{i=0}^{t_{1}k_{1}+\cdots
+t_{l}k_{l}}\!\sum\limits_{j=0}^{i}\!\left( -1\right) ^{\frac{\left(
sj+2(s+1)\right) \left( j+1\right) }{2}}\!\!\dbinom{t_{1}k_{1}+\cdots
+t_{l}k_{l}+1}{j}_{F_{s}\left( x,y\right) }\!\!  \notag \\
&&\times G_{m_{1}+st_{1}\left( i-j\right) }^{k_{1}}\left( x,y\right) \cdots
G_{m_{l}+st_{l}\left( i-j\right) }^{k_{l}}\left( x,y\right) y^{\frac{%
sj\left( j-1\right) }{2}}\dbinom{n+t_{1}k_{1}+\cdots +t_{l}k_{l}-i}{%
t_{1}k_{1}+\cdots +t_{l}k_{l}}_{F_{s}\left( x,y\right) }.  \notag
\end{eqnarray}
\end{corollary}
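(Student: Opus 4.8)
The plan is to deduce (\ref{4.4}) directly from formula (\ref{4.3}) by recognizing the right-hand side of (\ref{4.3}) as the $Z$-transform of the right-hand side of (\ref{4.4}), and then inverting. Write $T=t_{1}k_{1}+\cdots +t_{l}k_{l}$. By Proposition \ref{Prop2.1} the denominator appearing in (\ref{4.3}) is exactly $D_{s,T+1}\left( x,y;z\right)$, so Corollary \ref{Cor4.1} applies to it; in the shifted form (\ref{4.2}), for every integer $p_{0}$ with $0\leq p_{0}\leq T$ we have
\begin{equation*}
\mathcal{Z}\left( \binom{n+p_{0}}{T}_{F_{s}\left( x,y\right) }\right) =z^{p_{0}}\frac{\left( -1\right) ^{s+1}z}{D_{s,T+1}\left( x,y;z\right) }.
\end{equation*}

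First I would regroup the numerator of (\ref{4.3}) as $z\sum_{i=0}^{T}\sum_{j=0}^{i}c_{i,j}z^{T-i}$, where $c_{i,j}$ collects the sign $\left( -1\right) ^{\left( sj+2(s+1)\right) \left( j+1\right) /2}$, the $s$-Fibopolynomial $\binom{T+1}{j}_{F_{s}\left( x,y\right) }$, the factor $G_{m_{1}+st_{1}\left( i-j\right) }^{k_{1}}\left( x,y\right) \cdots G_{m_{l}+st_{l}\left( i-j\right) }^{k_{l}}\left( x,y\right)$, and $y^{sj\left( j-1\right) /2}$. Dividing each term by $D_{s,T+1}\left( x,y;z\right)$ yields $c_{i,j}\,z^{T-i}\cdot z/D_{s,T+1}\left( x,y;z\right)$, which by the displayed identity (with $p_{0}=T-i$, legitimate since $0\leq T-i\leq T$ whenever $0\leq i\leq T$) equals $\left( -1\right) ^{s+1}c_{i,j}\,\mathcal{Z}\left( \binom{n+T-i}{T}_{F_{s}\left( x,y\right) }\right)$, using $\left( -1\right) ^{2\left( s+1\right) }=1$.

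Summing over $i$ and $j$ and invoking linearity of $\mathcal{Z}$, formula (\ref{4.3}) then says that $\mathcal{Z}\left( G_{st_{1}n+m_{1}}^{k_{1}}\left( x,y\right) \cdots G_{st_{l}n+m_{l}}^{k_{l}}\left( x,y\right) \right)$ equals the $Z$-transform of the right-hand side of (\ref{4.4}). Since $\mathcal{Z}$ is injective (property (a) of Section \ref{Sec2}), the two sequences coincide, which is precisely (\ref{4.4}). (One may also first establish (\ref{4.3}) itself here, by the linearity argument sketched before its statement applied to (\ref{3.11}) via $G_{sn+m}\left( x,y\right) =yG_{m}\left( x,y\right) F_{sn-1}\left( x,y\right) +G_{m+1}\left( x,y\right) F_{sn}\left( x,y\right)$.)

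I do not expect a genuine obstacle; the only points demanding attention are bookkeeping ones: verifying that the denominator of (\ref{4.3}) is literally $D_{s,T+1}\left( x,y;z\right)$ so that Corollary \ref{Cor4.1} applies verbatim, and verifying that the shift parameter $T-i$ never leaves the range $\left[ 0,T\right]$ over the whole double sum, so that the shifted transform (\ref{4.2}) is valid term by term. All the substantive work has already been done in establishing (\ref{3.11}), (\ref{4.1}) and (\ref{4.3}).
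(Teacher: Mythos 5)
Your argument is correct and is essentially the paper's own proof: the paper simply states that (\ref{4.4}) follows from (\ref{4.2}) and (\ref{4.3}), and your write-up just spells out the term-by-term identification of $z^{T-i}\cdot z/D_{s,T+1}\left( x,y;z\right)$ with $\left( -1\right)^{s+1}\mathcal{Z}\bigl( \binom{n+T-i}{T}_{F_{s}\left( x,y\right) }\bigr)$ followed by linearity and injectivity of $\mathcal{Z}$. The bookkeeping points you flag (the denominator being exactly $D_{s,T+1}$ and the shift $T-i$ staying in $[0,T]$) are indeed the only things to check, and they hold.
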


\begin{proof}
Formula (\ref{4.4}) follows from (\ref{4.2}) and (\ref{4.3}).
\end{proof}

In particular, (\ref{4.4}) tells us that%
\begin{eqnarray}
G_{stn+m}^{k}\left( x,y\right) &=&\left( -1\right)
^{s+1}\sum_{i=0}^{tk}\sum_{j=0}^{i}\left( -1\right) ^{\frac{\left(
sj+2(s+1)\right) \left( j+1\right) }{2}}\!\!\dbinom{tk+1}{j}_{F_{s}\left(
x,y\right) }\!\!  \label{4.52} \\
&&\times G_{m+st\left( i-j\right) }^{k}\left( x,y\right) y^{\frac{sj\left(
j-1\right) }{2}}\dbinom{n+tk-i}{tk}_{F_{s}\left( x,y\right) }.  \notag
\end{eqnarray}

The following corollary shows that when $k=1$ and $G=F$ or $G=L$, formula (%
\ref{4.52}) can be written in a simpler form.

\begin{corollary}
\label{Cor4.3}\textit{Let }$t\in \mathbb{N}^{\prime }$\textit{\ be given.
The following identities hold}

(a)%
\begin{equation}
F_{tsn+m}\left( x,y\right) =\left( -y\right) ^{m}\sum\limits_{i=0}^{t}\left(
-1\right) ^{i+1}\dbinom{t}{i}_{F_{s}\left( x,y\right) }F_{is-m}\left(
x,y\right) \left( -y\right) ^{\frac{si\left( i-1\right) }{2}}\dbinom{n+t-i}{t%
}_{F_{s}\left( x,y\right) }.  \label{4.11}
\end{equation}

(b) 
\begin{equation}
L_{tsn+m}\left( x,y\right) =\left( -y\right) ^{m}\sum\limits_{i=0}^{t}\left(
-1\right) ^{i}\dbinom{t}{i}_{F_{s}\left( x,y\right) }L_{is-m}\left(
x,y\right) \left( -y\right) ^{\frac{si\left( i-1\right) }{2}}\dbinom{n+t-i}{t%
}_{F_{s}\left( x,y\right) }.  \label{4.11b}
\end{equation}
\end{corollary}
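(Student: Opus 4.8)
The plan is to derive both identities from formula (\ref{4.52}) specialized to $k=1$, by collapsing its inner $j$-summation with Proposition \ref{Prop2.5} and then converting the negative-index polynomials that appear into $F_{is-m}$ and $L_{is-m}$ via the extension rules $F_{-n}(x,y)=-(-y)^{-n}F_n(x,y)$ and $L_{-n}(x,y)=(-y)^{-n}L_n(x,y)$.

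For part (a) I would take $k=1$ and $G=F$ in (\ref{4.52}). In the resulting double sum the inner $j$-sum is exactly the left-hand side of (\ref{2.28}), so Proposition \ref{Prop2.5}(a) replaces it by $(-1)^{s+i+1+\frac{si(i+1)}{2}}\binom{t}{i}_{F_s(x,y)}F_{m-is}(x,y)\,y^{\frac{si(i+1)}{2}}$. Absorbing the overall factor $(-1)^{s+1}$, and using that $i(i+1)$ is even so that $(-1)^{\frac{si(i+1)}{2}}y^{\frac{si(i+1)}{2}}=(-y)^{\frac{si(i+1)}{2}}$, collapses (\ref{4.52}) to
\[
F_{tsn+m}(x,y)=\sum_{i=0}^{t}(-1)^{i}(-y)^{\frac{si(i+1)}{2}}\binom{t}{i}_{F_s(x,y)}F_{m-is}(x,y)\binom{n+t-i}{t}_{F_s(x,y)}.
\]

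To finish, substitute $F_{m-is}(x,y)=-(-y)^{m-is}F_{is-m}(x,y)$ (the negative-index extension formula, which holds for all integers) and combine exponents using $\frac{si(i+1)}{2}+m-is=m+\frac{si(i-1)}{2}$; the sign picks up an extra factor $-1$, and one obtains $(-y)^m\sum_{i=0}^{t}(-1)^{i+1}(-y)^{\frac{si(i-1)}{2}}\binom{t}{i}_{F_s(x,y)}F_{is-m}(x,y)\binom{n+t-i}{t}_{F_s(x,y)}$, which is (\ref{4.11}). Part (b) is proved the same way, starting from (\ref{4.52}) with $k=1$, $G=L$, using Proposition \ref{Prop2.5}(b) to collapse the inner sum, and then $L_{m-is}(x,y)=(-y)^{m-is}L_{is-m}(x,y)$ — now with no extra minus sign — which is precisely why (\ref{4.11b}) carries $(-1)^{i}$ where (\ref{4.11}) carries $(-1)^{i+1}$. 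I do not expect any genuine obstacle here; the only thing demanding care is the parity and power-of-$y$ bookkeeping, together with the observation that the asymmetry between the negative-index conventions for $F$ and $L$ is the sole source of the sign difference between the two formulas.
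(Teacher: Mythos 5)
Your proposal is correct and is essentially the paper's own argument: the paper proves the corollary by citing (\ref{4.52}) with $k=1$ together with (\ref{2.28}) and (\ref{2.29}), and your write-up simply supplies the sign and $y$-power bookkeeping (including the conversion $F_{m-is}=-(-y)^{m-is}F_{is-m}$ versus $L_{m-is}=(-y)^{m-is}L_{is-m}$, which indeed accounts for the $(-1)^{i+1}$ versus $(-1)^{i}$ discrepancy) that the paper leaves implicit. No gaps.
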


\begin{proof}
These results are direct consequences of (\ref{2.28}), (\ref{2.29}) and (\ref%
{4.52}).
\end{proof}

If we set $t=1$ in (\ref{4.11}) and (\ref{4.11b}) we get (\ref{1.11}) and (%
\ref{1.12}), respectively. When $t=2$ we obtain from (\ref{4.11}) and (\ref%
{4.11b}) the identities%
\begin{eqnarray}
F_{2sn+m}\left( x,y\right) &=&F_{m}\left( x,y\right) \dbinom{n+2}{2}%
_{F_{s}\left( x,y\right) }+\left( -y\right) ^{m}L_{s}\!\left( x,y\right)
F_{s-m}\left( x,y\right) \dbinom{n+1}{2}_{F_{s}\left( x,y\right) }
\label{4.111} \\
&&-F_{2s-m}\left( x,y\right) \left( -y\right) ^{m+s}\dbinom{n}{2}%
_{F_{s}\left( x,y\right) },  \notag
\end{eqnarray}%
and%
\begin{eqnarray}
L_{2sn+m}\left( x,y\right) &=&L_{m}\left( x,y\right) \dbinom{n+2}{2}%
_{F_{s}\left( x,y\right) }-\left( -y\right) ^{m}L_{s}\!\left( x,y\right)
L_{s-m}\left( x,y\right) \dbinom{n+1}{2}_{F_{s}\left( x,y\right) }
\label{4.112} \\
&&+L_{2s-m}\left( x,y\right) \left( -y\right) ^{s+m}\dbinom{n}{2}%
_{F_{s}\left( x,y\right) },  \notag
\end{eqnarray}%
respectively. Observe also that if we set $t=1$, $k=2$ and $G=F$ in (\ref%
{4.52}), we obtain

\begin{equation}
F_{sn}^{2}\left( x,y\right) =F_{s}^{2}\left( x,y\right) \left( \dbinom{n+1}{2%
}_{F_{s}\left( x,y\right) }+\left( -y\right) ^{s}\dbinom{n}{2}_{F_{s}\left(
x,y\right) }\right) ,  \label{4.1124}
\end{equation}%
which is (\ref{1.81}). Some other examples of (\ref{4.52}), besides (\ref%
{1.191}) and (\ref{4.1124}), are the following (after some simplifications
on the coefficients of the bivariate $s$-Fibopolynomials of the right-hand
sides, with the help of (\ref{1.6}), (\ref{1.7}) and/or (\ref{1.71}))%
\begin{eqnarray}
\frac{F_{2sn}^{2}\left( x,y\right) }{F_{2s}^{2}\left( x,y\right) } &=&%
\dbinom{n+3}{4}_{F_{s}\left( x,y\right) }+y^{6s}\dbinom{n}{4}_{F_{s}\left(
x,y\right) }  \label{4.1126} \\
&&+y^{s}\left( \left( -1\right) ^{s+1}L_{2s}\left( x,y\right) +y^{s}\right)
\left( \dbinom{n+2}{4}_{F_{s}\left( x,y\right) }+y^{2s}\dbinom{n+1}{4}%
_{F_{s}\left( x,y\right) }\right) .  \notag
\end{eqnarray}%
\begin{eqnarray}
L_{sn}^{2}\left( x,y\right) &=&4\dbinom{n+2}{2}_{F_{s}\left( x,y\right)
}-\left( 3L_{2s}\left( x,y\right) +2\left( -y\right) ^{s}\right) \dbinom{n+1%
}{2}_{F_{s}\left( x,y\right) }  \label{4.113} \\
&&+L_{s}^{2}\left( x,y\right) \left( -y\right) ^{s}\dbinom{n}{2}%
_{F_{s}\left( x,y\right) }.  \notag
\end{eqnarray}%
\begin{equation}
\frac{F_{sn}^{3}\left( x,y\right) }{F_{s}^{3}\left( x,y\right) }=\!\dbinom{%
n+2}{3}_{F_{s}\left( x,y\right) }+2\left( -y\right) ^{s}\!\!L_{s}\!\left(
x,y\right) \dbinom{n+1}{3}_{F_{s}\left( x,y\right) }+\left( -y\right)
^{3s}\!\!\dbinom{n}{3}_{F_{s}\left( x,y\right) }.  \label{4.114}
\end{equation}%
\begin{eqnarray}
\frac{F_{sn}\left( x,y\right) F_{2sn}\left( x,y\right) F_{3sn}\left(
x,y\right) }{F_{s}\left( x,y\right) F_{2s}\left( x,y\right) F_{3s}\left(
x,y\right) } &=&\dbinom{n+5}{6}_{F_{s}\left( x,y\right) }+\left( -1\right)
^{s+1}y^{15s}\dbinom{n}{6}_{F_{s}\left( x,y\right) }  \label{4.115} \\
&&+y^{3s}\left( \left( -1\right) ^{s}\dbinom{n+4}{6}_{F_{s}\left( x,y\right)
}-y^{9s}\dbinom{n+1}{6}_{F_{s}\left( x,y\right) }\right)  \notag \\
&&\!+y^{3s}L_{2s}\!\left( x,y\right) \!\left( L_{2s}\!\left( x,y\right)
-\left( -y\right) ^{s}\right) \!\left( L_{2s}\!\left( x,y\right) +2\left(
-y\right) ^{s}\right) \!  \notag \\
&&\times \left( \left( -1\right) ^{s+1}\!\dbinom{n+3}{6}_{F_{s}\left(
x,y\right) }\!+y^{3s}\dbinom{n+2}{6}_{F_{s}\left( x,y\right) }\right) . 
\notag
\end{eqnarray}

\begin{eqnarray}
&&\frac{L_{2sn}\left( x,y\right) F_{sn}\left( x,y\right) }{F_{s}\left(
x,y\right) }  \label{4.116} \\
&=&L_{2s}\left( x,y\right) \dbinom{n+2}{3}_{F_{s}\left( x,y\right)
}-2y^{2s}L_{s}\!\left( x,y\right) \dbinom{n+1}{3}_{F_{s}\left( x,y\right)
}+\left( -y\right) ^{3s}L_{2s}\left( x,y\right) \dbinom{n}{3}_{F_{s}\left(
x,y\right) }.  \notag
\end{eqnarray}

\begin{eqnarray}
L_{2sn}\left( x,y\right) L_{sn}\left( x,y\right) &=&4\dbinom{n+3}{3}%
_{F_{s}\left( x,y\right) }+2\left( -y\right) ^{s}L_{2s}\!\left( x,y\right)
\left( L_{2s}\!\left( x,y\right) +\left( -y\right) ^{s}\right) \dbinom{n+1}{3%
}_{F_{s}\left( x,y\right) }  \label{4.118} \\
&&-\left( L_{3s}\left( x,y\right) +\left( -y\right) ^{s}L_{s}\left(
x,y\right) \right) \left( 3\dbinom{n+2}{3}_{F_{s}\left( x,y\right) }+\left(
-y\right) ^{3s}\dbinom{n}{3}_{F_{s}\left( x,y\right) }\right) .  \notag
\end{eqnarray}%
\begin{eqnarray}
\frac{F_{2s\left( n+1\right) }\left( x,y\right) F_{s\left( n+2\right)
}^{2}\left( x,y\right) }{F_{2s}\left( x,y\right) F_{s}^{2}\left( x,y\right) }
&=&L_{s}^{2}\left( x,y\right) \dbinom{n+4}{4}_{F_{s}\left( x,y\right)
}+\left( -1\right) ^{s+1}L_{4s}\left( x,y\right) y^{s}\dbinom{n+3}{4}%
_{F_{s}\left( x,y\right) }  \label{4.119} \\
&&-L_{2s}\left( x,y\right) y^{4s}\dbinom{n+2}{4}_{F_{s}\left( x,y\right) }. 
\notag
\end{eqnarray}

In the following corollary we consider sequences involving bivariate $s$%
-Gibopolynomials\ $\binom{n}{p}_{G_{s}\left( x,y\right) }$.

\begin{corollary}
\label{Cor4.4}\textit{Let }$t_{1},\ldots ,t_{l}\in \mathbb{N}$ \textit{\ and
\ }$r_{1}\ldots ,r_{l},p_{1}\ldots ,p_{l}\in \mathbb{N}^{\prime }$\textit{\
be given. Then the }$Z$\textit{\ transform of the sequence }$\binom{n}{p_{1}}%
_{G_{st_{1}}\left( x,y\right) }^{r_{1}}\cdots \binom{n}{p_{l}}%
_{G_{st_{l}}\left( x,y\right) }^{r_{l}}$\textit{\ is given by}%
\begin{eqnarray}
&&\mathcal{Z}\left( \dbinom{n}{p_{1}}_{G_{st_{1}}\left( x,y\right)
}^{r_{1}}\cdots \dbinom{n}{p_{l}}_{G_{st_{l}}\left( x,y\right)
}^{r_{l}}\right)  \label{4.12} \\
&&  \notag \\
&=&z\frac{%
\begin{array}{c}
\dsum\limits_{i=0}^{t_{1}r_{1}p_{1}+\cdots
+t_{l}r_{l}p_{l}}\dsum\limits_{j=0}^{i}\left( -1\right) ^{\frac{\left(
sj+2(s+1)\right) \left( j+1\right) }{2}}\dbinom{t_{1}r_{1}p_{1}+\cdots
+t_{l}r_{l}p_{l}+1}{j}_{F_{s}\left( x,y\right) } \\ 
\\ 
\times \dbinom{i-j}{p_{1}}_{G_{st_{1}}\left( x,y\right) }^{r_{1}}\cdots 
\dbinom{i-j}{p_{l}}_{G_{st_{l}}\left( x,y\right) }^{r_{l}}y^{\frac{sj\left(
j-1\right) }{2}}z^{t_{1}r_{1}p_{1}+\cdots +t_{l}r_{l}p_{l}-i}%
\end{array}%
}{\dsum\limits_{i=0}^{t_{1}r_{1}p_{1}+\cdots +t_{l}r_{l}p_{l}+1}\left(
-1\right) ^{\frac{\left( si+2(s+1)\right) \left( i+1\right) }{2}}\dbinom{%
t_{1}r_{1}p_{1}+\cdots +t_{l}r_{l}p_{l}+1}{i}_{F_{s}\left( x,y\right) }y^{%
\frac{si\left( i-1\right) }{2}}z^{t_{1}r_{1}p_{1}+\cdots
+t_{l}r_{l}p_{l}+1-i}}.  \notag
\end{eqnarray}
\end{corollary}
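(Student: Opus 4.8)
The plan is to reduce (\ref{4.12}) to the already-established formula (\ref{4.3}) by writing each bivariate $s$-Gibopolynomial as a constant (in $n$) multiple of a product of bivariate $s$-Gibonacci polynomials. From (\ref{1.14}), for any $t\in\mathbb{N}$,
\[
\dbinom{n}{p}_{G_{st}\left(x,y\right)}=\frac{1}{\left(G_{p}\left(x,y\right)!\right)_{st}}\prod_{a=0}^{p-1}G_{t s n-a t s}\left(x,y\right),
\]
where $\left(G_{p}\left(x,y\right)!\right)_{st}=G_{pts}\left(x,y\right)G_{\left(p-1\right)ts}\left(x,y\right)\cdots G_{ts}\left(x,y\right)$ does not depend on $n$. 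Multiplying these, the sequence on the left of (\ref{4.12}) is
\[
\prod_{\nu=1}^{l}\dbinom{n}{p_{\nu}}_{G_{st_{\nu}}\left(x,y\right)}^{r_{\nu}}=\left(\prod_{\nu=1}^{l}\left(G_{p_{\nu}}\left(x,y\right)!\right)_{st_{\nu}}^{-r_{\nu}}\right)\prod_{\nu=1}^{l}\prod_{a=0}^{p_{\nu}-1}G_{t_{\nu}sn-at_{\nu}s}^{r_{\nu}}\left(x,y\right),
\]
i.e.\ a constant times a product of powers of bivariate $s$-Gibonacci polynomials of the shape $G_{t_{\nu}sn+m}\left(x,y\right)$.

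Next I would apply (\ref{4.3}) to that product, with the index set taken to be the pairs $\left(t_{\nu},r_{\nu}\right)$ carrying shifts $m=-at_{\nu}s$, for $\nu=1,\dots,l$ and $a=0,\dots,p_{\nu}-1$. The weight $t_{1}k_{1}+\cdots$ occurring throughout (\ref{4.3}) becomes here $\sum_{\nu=1}^{l}\sum_{a=0}^{p_{\nu}-1}t_{\nu}r_{\nu}=t_{1}r_{1}p_{1}+\cdots+t_{l}r_{l}p_{l}$, which immediately yields the denominator of (\ref{4.12}) and the correct ranges of summation. For the numerator, the bivariate Gibonacci factor supplied by (\ref{4.3}) at a fixed pair $i,j$ is
\[
\prod_{\nu=1}^{l}\prod_{a=0}^{p_{\nu}-1}G_{-at_{\nu}s+t_{\nu}s\left(i-j\right)}^{r_{\nu}}\left(x,y\right)=\prod_{\nu=1}^{l}\left(G_{t_{\nu}s\left(i-j\right)}\left(x,y\right)G_{t_{\nu}s\left(i-j-1\right)}\left(x,y\right)\cdots G_{t_{\nu}s\left(i-j-p_{\nu}+1\right)}\left(x,y\right)\right)^{r_{\nu}},
\]
and applying the displayed product identity again, now with $n$ replaced by $i-j$, each inner product is $\left(G_{p_{\nu}}\left(x,y\right)!\right)_{st_{\nu}}\dbinom{i-j}{p_{\nu}}_{G_{st_{\nu}}\left(x,y\right)}$. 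Hence the numerator factor equals $\bigl(\prod_{\nu}\left(G_{p_{\nu}}\left(x,y\right)!\right)_{st_{\nu}}^{r_{\nu}}\bigr)\prod_{\nu}\dbinom{i-j}{p_{\nu}}_{G_{st_{\nu}}\left(x,y\right)}^{r_{\nu}}$, and the constant here cancels exactly the factor $\prod_{\nu}\left(G_{p_{\nu}}\left(x,y\right)!\right)_{st_{\nu}}^{-r_{\nu}}$ pulled out in the first step. Assembling the pieces gives (\ref{4.12}).

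I expect the only real subtlety to be the reading of $\dbinom{i-j}{p_{\nu}}_{G_{st_{\nu}}\left(x,y\right)}$ when $i-j<p_{\nu}$: in that case it is \emph{not} an honest Gibopolynomial, and must be understood as the natural extension of (\ref{1.14}) to arguments below $p_{\nu}$, namely $\dbinom{m}{p}_{G_{st}\left(x,y\right)}:=\prod_{a=0}^{p-1}G_{t s\left(m-a\right)}\left(x,y\right)/\left(G_{p}\left(x,y\right)!\right)_{st}$; with this convention the rearrangement above is an identity of (rational) expressions in $x,y$, valid for every $i,j$. In the pure Fibonacci case $G=F$ this point is invisible, since $F_{0}\left(x,y\right)=0$ kills every such low-index term (cf.\ the argument after (\ref{4.15})), so the stated corollary is understood with that extension only in the genuinely Gibonacci generality. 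Apart from flagging this convention, the proof is purely bookkeeping: matching the index parameters of (\ref{4.3}) with those of (\ref{4.12}) and telescoping the Gibonacci products; no new identities are required.
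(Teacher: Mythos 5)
Your proposal is correct and follows essentially the same route as the paper: pull out the constant product of bivariate $s$-Gibonacci factorials, apply (\ref{4.3}) to the resulting product of shifted $s$-Gibonacci polynomials (with total weight $t_{1}r_{1}p_{1}+\cdots +t_{l}r_{l}p_{l}$), and reassemble the numerator factors into $\binom{i-j}{p_{\nu }}_{G_{st_{\nu }}\left( x,y\right) }^{r_{\nu }}$ so that the constants cancel. Your remark about reading $\binom{i-j}{p_{\nu }}_{G_{st_{\nu }}\left( x,y\right) }$ for $i-j<p_{\nu }$ as the natural extension of (\ref{1.14}) is a fair clarification that the paper leaves implicit, but it does not change the argument.
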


\begin{proof}
First we write%
\begin{eqnarray*}
&&\mathcal{Z}\left( \dbinom{n}{p_{1}}_{G_{st_{1}}\left( x,y\right)
}^{r_{1}}\cdots \dbinom{n}{p_{k}}_{G_{st_{l}}\left( x,y\right)
}^{r_{l}}\right) \\
&=&\frac{1}{G_{st_{1}}^{r_{1}}\left( x,y\right) \cdots
G_{st_{1}p_{1}}^{r_{1}}\left( x,y\right) \cdots G_{st_{l}}^{r_{l}}\left(
x,y\right) \cdots G_{st_{l}p_{l}}^{r_{l}}\left( x,y\right) } \\
&&\times \mathcal{Z}\left( G_{st_{1}n}^{r_{1}}\left( x,y\right) \cdots
G_{st_{1}\left( n-p_{1}+1\right) }^{r_{1}}\left( x,y\right) \cdots
G_{st_{l}n}^{r_{l}}\left( x,y\right) \cdots G_{st_{l}\left( n-p_{l}+1\right)
}^{r_{l}}\left( x,y\right) \right) ,
\end{eqnarray*}%
and then we use (\ref{4.3}) to get%
\begin{eqnarray*}
&&\mathcal{Z}\left( \dbinom{n}{p_{1}}_{G_{st_{1}}\left( x,y\right)
}^{r_{1}}\cdots \dbinom{n}{p_{l}}_{G_{st_{l}}\left( x,y\right)
}^{r_{l}}\right) \\
&& \\
&=&\frac{1}{G_{st_{1}}^{r_{1}}\left( x,y\right) \cdots
G_{st_{1}p_{1}}^{r_{1}}\left( x,y\right) \cdots G_{st_{l}}^{r_{l}}\left(
x,y\right) \cdots G_{st_{l}p_{l}}^{r_{l}}\left( x,y\right) } \\
&& \\
&&\times \frac{%
\begin{array}{c}
z\dsum\limits_{i=0}^{t_{1}r_{1}p_{1}+\cdots
+t_{l}r_{l}p_{l}}\dsum\limits_{j=0}^{i}\left( -1\right) ^{\frac{\left(
sj+2(s+1)\right) \left( j+1\right) }{2}}\dbinom{t_{1}r_{1}p_{1}+\cdots
+t_{l}r_{l}p_{l}+1}{j}_{F_{s}\left( x,y\right) } \\ 
\\ 
\times G_{st_{1}\left( i-j\right) }^{r_{1}}\left( x,y\right) \cdots
G_{st_{1}\left( i-j-p_{1}+1\right) }^{r_{1}}\left( x,y\right) \cdots
G_{st_{k}\left( i-j\right) }^{r_{l}}\left( x,y\right) \cdots G_{st_{l}\left(
i-j-p_{l}+1\right) }^{r_{l}}\left( x,y\right) \\ 
\\ 
\times y^{\frac{sj\left( j-1\right) }{2}}z^{t_{1}r_{1}p_{1}+\cdots
+t_{l}r_{l}p_{l}-i}%
\end{array}%
}{\dsum\limits_{i=0}^{t_{1}r_{1}p_{1}+\cdots +t_{l}r_{l}p_{l}+1}\left(
-1\right) ^{\frac{\left( si+2(s+1)\right) \left( i+1\right) }{2}}\dbinom{%
t_{1}r_{1}p_{1}+\cdots +t_{l}r_{l}p_{l}+1}{i}_{F_{s}\left( x,y\right) }y^{%
\frac{si\left( i-1\right) }{2}}z^{t_{1}r_{1}p_{1}+\cdots
+t_{l}r_{l}p_{l}+1-i}},
\end{eqnarray*}%
which implies the desired formula (\ref{4.12}).
\end{proof}

\begin{corollary}
\label{Cor4.5}\textit{Let }$t_{1},\ldots ,t_{l}\in \mathbb{N}$ \textit{\ and
\ }$r_{1}\ldots ,r_{l},p_{1}\ldots ,p_{l}\in \mathbb{N}^{\prime }$\textit{\
be given. The sequence }$\prod\limits_{i=1}^{l}\binom{n}{p_{i}}%
_{G_{st_{i}}\left( x,y\right) }^{r_{i}}$\textit{\ can be expressed as a
linear combination of the bivariate }$s$-\textit{Fibopolynomials }$\binom{%
n+t_{1}r_{1}p_{1}+\cdots +t_{l}r_{l}p_{l}-i}{t_{1}r_{1}p_{1}+\cdots
+t_{l}r_{l}p_{l}}_{F_{s}\left( x,y\right) }$\textit{, }$i=0,1,\ldots
,t_{1}r_{1}p_{1}+\cdots +t_{l}r_{l}p_{l}$, \textit{according to}%
\begin{eqnarray}
\prod\limits_{i=1}^{l}\binom{n}{p_{i}}_{G_{st_{i}}\left( x,y\right)
}^{r_{i}} &=&\left( -1\right)
^{s+1}\sum\limits_{i=0}^{t_{1}r_{1}p_{1}+\cdots
+t_{l}r_{l}p_{l}}\sum\limits_{j=0}^{i}\left( -1\right) ^{\frac{\left(
sj+2(s+1)\right) \left( j+1\right) }{2}}\dbinom{t_{1}r_{1}p_{1}+\cdots
+t_{l}r_{l}p_{l}+1}{j}_{F_{s}\left( x,y\right) }  \notag \\
&&\!\!\!\times \!\dbinom{i-j}{p_{1}}_{\!G_{st_{1}}\!\left( x,y\right)
}^{r_{1}}\!\!\cdots \!\dbinom{i-j}{p_{l}}_{\!G_{st_{l}}\!\left( x,y\right)
}^{r_{l}}y^{\frac{sj\left( j-1\right) }{2}}\!\dbinom{n+t_{1}r_{1}p_{1}+%
\cdots +t_{l}r_{l}p_{l}-i}{t_{1}r_{1}p_{1}+\cdots +t_{l}r_{l}p_{l}}%
_{\!F_{s}\!\left( x,y\right) }.  \label{4.13}
\end{eqnarray}
\end{corollary}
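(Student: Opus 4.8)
The plan is to deduce the identity directly from Corollary \ref{Cor4.4} by inverting the $Z$ transform, in the same spirit in which Corollary \ref{Cor4.2} was obtained from (\ref{4.2}) and (\ref{4.3}). Write $P=t_{1}r_{1}p_{1}+\cdots +t_{l}r_{l}p_{l}$ for brevity. By the notation fixed after Proposition \ref{Prop2.1}, the $z$-polynomial in the denominator of the right-hand side of (\ref{4.12}) is precisely $D_{s,P+1}\left( x,y;z\right) $, and (\ref{4.1}) gives $\mathcal{Z}\left( \binom{n}{P}_{F_{s}\left( x,y\right) }\right) =\left( -1\right) ^{s+1}z/D_{s,P+1}\left( x,y;z\right) $.

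First I would split the double sum in the numerator of (\ref{4.12}) into its individual $(i,j)$-terms. Carrying along the leading factor $z$ and dividing by the denominator, each such term contains the factor
\begin{equation*}
z\,\frac{z^{P-i}}{D_{s,P+1}\left( x,y;z\right) }=\left( -1\right) ^{s+1}z^{P-i}\,\frac{\left( -1\right) ^{s+1}z}{D_{s,P+1}\left( x,y;z\right) }.
\end{equation*}
Applying (\ref{4.2}) with the shift $p_{0}=P-i$ (legitimate because $i$ runs from $0$ to $P$, so $0\le p_{0}\le P$, and the Fibopolynomial $\binom{n}{P}_{F_{s}\left( x,y\right) }$ vanishes for $n=0,1,\ldots ,P-1$, so the advance-shifting property (\ref{2.1}) carries no correction terms), this factor equals $\left( -1\right) ^{s+1}\mathcal{Z}\left( \binom{n+P-i}{P}_{F_{s}\left( x,y\right) }\right) $.

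Substituting back, the right-hand side of (\ref{4.12}) becomes $\left( -1\right) ^{s+1}$ times $\mathcal{Z}$ applied, term by term and using linearity, to the expression on the right-hand side of (\ref{4.13}). Since Corollary \ref{Cor4.4} identifies the left-hand side of (\ref{4.12}) as $\mathcal{Z}\left( \prod_{i=1}^{l}\binom{n}{p_{i}}_{G_{st_{i}}\left( x,y\right) }^{r_{i}}\right) $, and since $\mathcal{Z}$ is injective (Section \ref{Sec2}, property (a)), equating preimages yields (\ref{4.13}).

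The only delicate part is the bookkeeping: one must verify that the power of $z$ attached to the $(i,j)$-term in the numerator of (\ref{4.12}) is exactly $z^{P-i}$, so that the required shift $p_{0}=P-i$ always lands in the admissible range $0\le p_{0}\le P$, and that the polynomial in the denominator is literally $D_{s,P+1}\left( x,y;z\right) $ rather than a constant multiple of it. No identity beyond those already recorded is needed; all the work is in the inversion of the $Z$ transform. As in the proof of Corollary \ref{Cor4.4}, the symbol $\binom{i-j}{p_{k}}_{G_{st_{k}}\left( x,y\right) }$ for small values of $i-j$ is understood through the product formula (\ref{1.14}), exactly as it arises there.
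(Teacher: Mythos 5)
Your proposal is correct and follows essentially the same route as the paper, which obtains (\ref{4.13}) directly from (\ref{4.12}) and the shifted formula (\ref{4.2}); you have merely spelled out the bookkeeping (the $z^{P-i}$ factors, the identification of the denominator with $D_{s,P+1}\left( x,y;z\right) $, the vanishing of $\binom{n}{P}_{F_{s}\left( x,y\right) }$ for $n<P$ so that (\ref{2.1}) carries no correction terms, and the injectivity of $\mathcal{Z}$) that the paper leaves implicit.
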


\begin{proof}
This comes directly from (\ref{4.12}) and (\ref{4.2}).
\end{proof}

Some examples of (\ref{4.13}) are%
\begin{equation}
\binom{n}{2}_{F_{s}\left( x,y\right) }^{2}=\dbinom{n+2}{4}_{F_{s}\left(
x,y\right) }+\left( -y\right) ^{s}L_{s}^{2}\left( x,y\right) \dbinom{n+1}{4}%
_{F_{s}\left( x,y\right) }+y^{4s}\dbinom{n}{4}_{F_{s}\left( x,y\right) }.
\label{4.14}
\end{equation}%
\begin{eqnarray}
\binom{n}{3}_{F_{s}\left( x,y\right) }^{2} &=&\dbinom{n+3}{6}_{F_{s}\left(
x,y\right) }+\left( -1\right) ^{s}y^{9s}\dbinom{n}{6}_{F_{s}\left(
x,y\right) }  \label{4.16} \\
&&+y^{s}\left( L_{2s}\left( x,y\right) +(-y)^{s}\right) ^{2}\left( \left(
-1\right) ^{s}\dbinom{n+2}{6}_{F_{s}\left( x,y\right) }+y^{3s}\dbinom{n+1}{6}%
_{F_{s}\left( x,y\right) }\right) .  \notag
\end{eqnarray}%
\begin{eqnarray}
\binom{n}{2}_{F_{s}\left( x,y\right) }\binom{n}{3}_{F_{s}\left( x,y\right) }
&=&\left( L_{2s}\left( x,y\right) +\left( -y\right) ^{s}\right) \!\binom{n+2%
}{5}_{F_{s}\left( x,y\right) }\!  \label{4.161} \\
&&+y^{2s}\left( L_{3s}\left( x,y\right) +2\left( -y\right) ^{s}L_{s}\left(
x,y\right) \right) \!\binom{n+1}{5}_{F_{s}\left( x,y\right) }\!+y^{6s}\binom{%
n}{5}_{F_{s}\left( x,y\right) }.  \notag
\end{eqnarray}%
\begin{equation}
\binom{n}{2}_{F_{2s}\left( x,y\right) }=\dbinom{n+2}{4}_{F_{s}\left(
x,y\right) }-\left( -y\right) ^{s}L_{2s}\left( x,y\right) \dbinom{n+1}{4}%
_{F_{s}\left( x,y\right) }+y^{4s}\dbinom{n}{4}_{F_{s}\left( x,y\right) }.
\label{4.17}
\end{equation}%
\begin{eqnarray}
\binom{n}{3}_{F_{2s}\left( x,y\right) } &=&\dbinom{n+3}{6}_{F_{s}\left(
x,y\right) }+\left( -1\right) ^{s+1}y^{9s}\dbinom{n}{6}_{F_{s}\left(
x,y\right) }  \label{4.18} \\
&&+y^{s}\left( y^{2s}+L_{4s}\left( x,y\right) \right) \left( \left(
-1\right) ^{s+1}\dbinom{n+2}{6}_{F_{s}\left( x,y\right) }+y^{3s}\dbinom{n+1}{%
6}_{F_{s}\left( x,y\right) }\right) .  \notag
\end{eqnarray}%
\begin{eqnarray}
\binom{n}{3}_{L_{s}\!\left( x,y\right) } &=&\frac{2\left( -1\right) ^{s}}{%
y^{3s}L_{3s}\left( x,y\right) }\dbinom{n+3}{3}_{F_{s}\left( x,y\right) }+%
\frac{2\left( -1\right) ^{s+1}\left( L_{2s}\left( x,y\right) +\left(
-y\right) ^{s}\right) }{y^{3s}L_{2s}\left( x,y\right) }\dbinom{n+2}{3}%
_{F_{s}\left( x,y\right) }  \label{4.19} \\
&&+\frac{2\left( L_{2s}\left( x,y\right) +\left( -y\right) ^{s}\right) }{%
y^{2s}L_{s}\!\left( x,y\right) }\dbinom{n+1}{3}_{F_{s}\left( x,y\right) }-%
\dbinom{n}{3}_{F_{s}\left( x,y\right) }.  \notag
\end{eqnarray}%
\begin{eqnarray}
\binom{n}{2}_{L_{2s}\!\left( x,y\right) }\binom{n}{2}_{F_{s}\!\left(
x,y\right) } &=&\dbinom{n+4}{6}_{F_{s}\left( x,y\right) }+y^{12s}\dbinom{n}{6%
}_{F_{s}\left( x,y\right) }  \label{4.191} \\
&&-y^{2s}L_{s}^{2}\left( x,y\right) \left( \dbinom{n+3}{6}_{F_{s}\left(
x,y\right) }+y^{6s}\dbinom{n+1}{6}_{F_{s}\left( x,y\right) }\right)  \notag
\\
&&+\frac{\left( -y\right) ^{3s}}{L_{4s}\left( x,y\right) }\left(
L_{10s}\left( x,y\right) +3y^{4s}L_{2s}\left( x,y\right) +4\left( -y\right)
^{5s}\right) \dbinom{n+2}{6}_{F_{s}\left( x,y\right) }.  \notag
\end{eqnarray}

Note that from (\ref{4.14}) and (\ref{4.17}) we can see at once that 
\begin{equation}
\binom{n}{2}_{F_{s}\left( x,y\right) }^{2}-\binom{n}{2}_{F_{2s}\left(
x,y\right) }=2\left( -y\right) ^{s}\frac{F_{3s}\left( x,y\right) }{%
F_{s}\left( x,y\right) }\dbinom{n+1}{4}_{F_{s}\left( x,y\right) }.
\label{4.192}
\end{equation}

After some simplifications we can write (\ref{4.192}) as 
\begin{equation*}
L_{2s}\left( x,y\right) F_{sn}\left( x,y\right) F_{s\left( n-1\right)
}\left( x,y\right) -F_{s}^{2}\left( x,y\right) L_{sn}\left( x,y\right)
L_{s\left( n-1\right) }\left( x,y\right) =2\left( -y\right) ^{s}F_{s\left(
n+1\right) }\left( x,y\right) F_{s\left( n-2\right) }\left( x,y\right) ,
\end{equation*}%
(which resembles (\ref{1.11})).

\begin{corollary}
\label{Cor4.6}\textit{(a) Let }$m_{1},\ldots ,m_{l}\in \mathbb{Z}$\textit{\
and }$t_{1},\ldots ,t_{l},k_{1}\ldots ,k_{l}\in \mathbb{N}^{\prime }$\textit{%
\ be given. For }$n\geq t_{1}k_{1}+\cdots +t_{l}k_{l}+1$\textit{\ we have
that}%
\begin{equation}
\sum_{j=0}^{t_{1}k_{1}+\cdots +t_{l}k_{l}+1}\left( -1\right) ^{\frac{\left(
sj+2(s+1)\right) \left( j+1\right) }{2}}\dbinom{t_{1}k_{1}+\cdots
+t_{l}k_{l}+1}{j}_{F_{s}\left( x,y\right) }y^{\frac{sj\left( j-1\right) }{2}%
}\prod\limits_{i=1}^{l}G_{m_{i}+st_{i}\left( n-j\right) }^{k_{i}}\left(
x,y\right) =0.  \label{4.20}
\end{equation}

\textit{(b) Let }$t_{1},\ldots ,t_{l}\in \mathbb{N}$ \textit{and }$%
r_{1}\ldots ,r_{l},p_{1}\ldots ,p_{l}\in \mathbb{N}^{\prime }$\textit{\ be
given. For }$n\geq t_{1}r_{1}p_{1}+\cdots +t_{k}r_{k}p_{k}+1$\textit{\ we
have that}%
\begin{equation}
\sum_{j=0}^{t_{1}r_{1}p_{1}+\cdots +t_{l}r_{l}p_{l}+1}\left( -1\right) ^{%
\frac{\left( sj+2(s+1)\right) \left( j+1\right) }{2}}\dbinom{%
t_{1}r_{1}p_{1}+\cdots +t_{l}r_{l}p_{l}+1}{j}_{F_{s}\left( x,y\right) }y^{%
\frac{sj\left( j-1\right) }{2}}\prod\limits_{i=1}^{l}\dbinom{n-j}{p_{i}}%
_{G_{st_{i}}\left( x,y\right) }^{r_{i}}=0.  \label{4.21}
\end{equation}
\end{corollary}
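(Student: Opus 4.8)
The plan is to read both identities off as the linear recurrence encoded in the rational $Z$-transforms already obtained in this section: formula (\ref{4.3}) for part (a) and formula (\ref{4.12}) for part (b).

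For (a), set $K=t_{1}k_{1}+\cdots +t_{l}k_{l}$ and $a_{n}=\prod_{i=1}^{l}G_{m_{i}+st_{i}n}^{k_{i}}\left( x,y\right) $. By (\ref{4.3}) we have $\mathcal{Z}\left( a_{n}\right) =N\!\left( z\right) /D_{s,K+1}\!\left( x,y;z\right) $, where the shape of the numerator in (\ref{4.3})---an overall factor $z$ times a polynomial of degree $K$ in $z$---shows that $N\!\left( z\right) $ is a polynomial in $z$ supported on the powers $z^{1},z^{2},\ldots ,z^{K+1}$; in particular $\deg N\leq \deg D_{s,K+1}=K+1$ and $N$ has zero constant term. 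By Proposition \ref{Prop2.1}, $D_{s,K+1}\!\left( x,y;z\right) =\sum_{i=0}^{K+1}c_{i}z^{K+1-i}$ with $c_{i}=\left( -1\right) ^{\frac{\left( si+2(s+1)\right) \left( i+1\right) }{2}}\dbinom{K+1}{i}_{F_{s}\left( x,y\right) }y^{\frac{si\left( i-1\right) }{2}}$. I would then clear the denominator and compare coefficients: writing $D_{s,K+1}\!\left( x,y;z\right) \sum_{n\geq 0}a_{n}z^{-n}=N\!\left( z\right) $ and extracting the coefficient of $z^{-m}$ on the left, one gets $\sum_{i=0}^{K+1}c_{i}a_{K+1+m-i}=0$ for every $m\geq 1$ (since $N$ has only positive powers of $z$) and $\sum_{i=0}^{K+1}c_{i}a_{K+1-i}=0$ (since $N$ has no constant term). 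Putting $n=K+1+m$ these combine into $\sum_{j=0}^{K+1}c_{j}a_{n-j}=0$ for all $n\geq K+1$, which, once $c_{j}$ and $a_{n-j}=\prod_{i=1}^{l}G_{m_{i}+st_{i}\left( n-j\right) }^{k_{i}}\left( x,y\right) $ are spelled out, is exactly (\ref{4.20}). Alternatively one may start from (\ref{4.4}), which writes $\prod_{i}G_{st_{i}n+m_{i}}^{k_{i}}\left( x,y\right) $ as a finite linear combination of the shifted Fibopolynomials $\binom{n+K-i}{K}_{F_{s}\left( x,y\right) }$, $0\leq i\leq K$, and note that each of these satisfies the same recurrence---by the identical coefficient extraction applied to (\ref{4.1}), whose numerator $\left( -1\right) ^{s+1}z$ is again a single positive power of $z$.

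For (b) I would run the identical argument with (\ref{4.12}) in place of (\ref{4.3}): put $K^{\prime }=t_{1}r_{1}p_{1}+\cdots +t_{l}r_{l}p_{l}$ and $b_{n}=\prod_{i=1}^{l}\binom{n}{p_{i}}_{G_{st_{i}}\left( x,y\right) }^{r_{i}}$; formula (\ref{4.12}) exhibits $\mathcal{Z}\left( b_{n}\right) $ as $N^{\prime }\!\left( z\right) /D_{s,K^{\prime }+1}\!\left( x,y;z\right) $ with $N^{\prime }$ again supported on $z^{1},\ldots ,z^{K^{\prime }+1}$, so clearing the denominator and comparing coefficients yields $\sum_{j=0}^{K^{\prime }+1}c_{j}^{\prime }b_{n-j}=0$ for $n\geq K^{\prime }+1$, where $c_{j}^{\prime }$ is the coefficient of $z^{K^{\prime }+1-j}$ in $D_{s,K^{\prime }+1}\!\left( x,y;z\right) $. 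This is (\ref{4.21}).

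The only point needing care---and the step I would double-check---is pinning down the exact lower bound $n\geq K+1$ (resp. $n\geq K^{\prime }+1$) on the range of validity. This rests on two elementary observations about the numerators in (\ref{4.3}) and (\ref{4.12}): their degree in $z$ does not exceed that of the corresponding denominator (so the recurrence holds for all sufficiently large $n$), and they carry the explicit factor $z$ and hence vanish at $z=0$ (so the recurrence already holds at $n=K+1$, not merely for larger $n$). No identity beyond those established in Sections \ref{Sec2}--\ref{Sec4} is required.
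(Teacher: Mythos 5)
Your argument is correct and is essentially the paper's own proof: the paper simply observes that (\ref{4.20}) and (\ref{4.21}) follow from the numerators in (\ref{4.3}) and (\ref{4.12}), which is exactly the coefficient-extraction you carry out after clearing the denominator $D_{s,K+1}\left( x,y;z\right)$, including the boundary case $n=K+1$ coming from the explicit factor $z$ in the numerator. Nothing further is needed.
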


\begin{proof}
These results are consequences of (the numerators in) formulas (\ref{4.3})
and (\ref{4.12}).
\end{proof}

\begin{corollary}
\label{Cor4.8}\textit{Let} $p\in \mathbb{N}^{\prime }$\textit{\ be given.
The following identities hold}

\textit{(a)}%
\begin{equation}
\dbinom{n+1}{p+2}_{F_{s}\left( x,y\right) }=\frac{1}{F_{s\left( p+2\right)
}\left( x,y\right) }F_{s\left( p+2\right) n}\left( x,y\right) \ast \left(
-y\right) ^{s\left( n-p\right) }\dbinom{n}{p}_{F_{s}\left( x,y\right) }.
\label{4.23}
\end{equation}

\textit{(b)}%
\begin{eqnarray}
&&\dbinom{n+2}{p+4}_{F_{s}\left( x,y\right) }  \label{4.24} \\
&=&\frac{1}{F_{s\left( p+4\right) }\left( x,y\right) F_{s\left( p+2\right)
}\left( x,y\right) }F_{s\left( p+4\right) n}\left( x,y\right) \ast \left(
-y\right) ^{s\left( n-1\right) }F_{s\left( p+2\right) n}\left( x,y\right)
\ast y^{2s\left( n-p\right) }\dbinom{n}{p}_{F_{s}\left( x,y\right) }.  \notag
\end{eqnarray}
\end{corollary}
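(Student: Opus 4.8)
The plan is to apply the $Z$ transform to both sides of (a) and (b), then invoke injectivity of $\mathcal{Z}$ (listed as property (a) in section \ref{Sec2}) together with the convolution theorem (\ref{2.3}). Writing $D_{s,k+1}(x,y;z)$ for the degree-$(k\!+\!1)$ $z$-polynomial of (\ref{2.16}), the shifted formula (\ref{4.2}) gives $\mathcal{Z}\big(\binom{n+1}{p+2}_{F_{s}(x,y)}\big)=(-1)^{s+1}z^{2}/D_{s,p+3}(x,y;z)$ and $\mathcal{Z}\big(\binom{n+2}{p+4}_{F_{s}(x,y)}\big)=(-1)^{s+1}z^{3}/D_{s,p+5}(x,y;z)$, so it suffices to check that the transforms of the two right-hand sides equal these.

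For (a), the convolution theorem turns the right-hand side into $F_{s(p+2)}(x,y)^{-1}\,\mathcal{Z}(F_{s(p+2)n}(x,y))\cdot\mathcal{Z}\big((-y)^{s(n-p)}\binom{n}{p}_{F_{s}(x,y)}\big)$. By (\ref{2.10}), with $s$ replaced by $s(p+2)$, the first factor is $zF_{s(p+2)}(x,y)\big/\big(z^{2}-L_{s(p+2)}(x,y)z+(-y)^{s(p+2)}\big)$. For the second, I would write $(-y)^{s(n-p)}=(-y)^{-sp}\big((-y)^{s}\big)^{n}$ and apply the scaling rule (\ref{2.2}) with $\lambda=(-y)^{s}$ to the transform $(-1)^{s+1}z/D_{s,p+1}(x,y;z)$ of (\ref{4.1}). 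The whole computation then rests on one algebraic fact about $D_{s,\cdot}$: from the product form $D_{s,k+1}(x,y;z)=(-1)^{s+1}\prod_{j=0}^{k}\big(z-\alpha^{sj}(x,y)\beta^{s(k-j)}(x,y)\big)$ together with $\alpha^{s}(x,y)\beta^{s}(x,y)=(\alpha\beta)^{s}=(-y)^{s}$, rescaling $z\mapsto z/(-y)^{s}$ and clearing $\big((-y)^{s}\big)^{p+1}$ turns $\prod_{j=0}^{p}\big(z-\alpha^{sj}(x,y)\beta^{s(p-j)}(x,y)\big)$ into $\prod_{j=1}^{p+1}\big(z-\alpha^{sj}(x,y)\beta^{s(p+2-j)}(x,y)\big)$, i.e.\ $D_{s,p+3}(x,y;z)$ with its $j=0$ and $j=p+2$ factors deleted; since those two factors multiply to $z^{2}-L_{s(p+2)}(x,y)z+(-y)^{s(p+2)}$, one obtains
\begin{equation*}
\big((-y)^{s}\big)^{p+1}D_{s,p+1}\big(x,y;z/(-y)^{s}\big)=\frac{D_{s,p+3}(x,y;z)}{z^{2}-L_{s(p+2)}(x,y)z+(-y)^{s(p+2)}}.
\end{equation*}
Substituting back, the quadratic denominator of $\mathcal{Z}(F_{s(p+2)n}(x,y))$ and the factor $F_{s(p+2)}(x,y)^{-1}$ cancel, leaving exactly $(-1)^{s+1}z^{2}/D_{s,p+3}(x,y;z)$, and (a) follows by injectivity.

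Part (b) is entirely parallel, with one extra convolution factor. Combining (\ref{2.10}) and (\ref{2.2}) gives $\mathcal{Z}\big((-y)^{s(n-1)}F_{s(p+2)n}(x,y)\big)=zF_{s(p+2)}(x,y)\big/\big(z^{2}-(-y)^{s}L_{s(p+2)}(x,y)z+(-y)^{s(p+4)}\big)$, and scaling $(-1)^{s+1}z/D_{s,p+1}(x,y;z)$ by $\lambda=y^{2s}=(\alpha\beta)^{2s}$ handles $\mathcal{Z}\big(y^{2s(n-p)}\binom{n}{p}_{F_{s}(x,y)}\big)$; this time the product index shifts by $2$, so the missing factors of $D_{s,p+5}(x,y;z)$ are $z^{2}-L_{s(p+4)}(x,y)z+(-y)^{s(p+4)}$ (the $j=0,p+4$ pair) and $z^{2}-(-y)^{s}L_{s(p+2)}(x,y)z+(-y)^{s(p+4)}$ (the $j=1,p+3$ pair). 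Multiplying the three transforms together with $F_{s(p+4)}(x,y)^{-1}F_{s(p+2)}(x,y)^{-1}$, those two quadratics and both $F$-factors cancel, leaving $(-1)^{s+1}z^{3}/D_{s,p+5}(x,y;z)=\mathcal{Z}\big(\binom{n+2}{p+4}_{F_{s}(x,y)}\big)$, as needed.

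I expect the only real difficulty to be bookkeeping: one must keep the rescalings $z\mapsto z/(-y)^{s}$ and $z\mapsto z/y^{2s}$, the various powers of $-y$, the index shifts inside $D_{s,\cdot}$, and the degrees of the $z$-polynomials all consistent. Conceptually nothing is subtle — everything reduces to the observation that multiplying the argument of $D_{s,k+1}$ by $(\alpha\beta)^{s}$ lengthens the defining product by exactly one linear factor at each end. (One could instead deduce (b) from (a) applied with $p$ replaced by $p+2$, but tracking the additional $(-y)$-powers there is no easier than the direct route above.)
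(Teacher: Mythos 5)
Your proposal is correct and follows essentially the same route as the paper: both rest on the factorizations $D_{s,p+3}(x,y;z)=(-y)^{s(p+1)}\left(z^{2}-L_{s(p+2)}(x,y)z+(-y)^{s(p+2)}\right)D_{s,p+1}\!\left(x,y;(-y)^{-s}z\right)$ and the analogous two-quadratic factorization of $D_{s,p+5}$, combined with (\ref{4.1})--(\ref{4.2}), (\ref{2.10}) via the scaling rule (\ref{2.2}), and the convolution theorem. The only cosmetic difference is that you transform the right-hand side and invoke injectivity, whereas the paper factors the transform of the left-hand side and reads off the inverse transform directly.
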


\begin{proof}
(a) First observe that%
\begin{eqnarray*}
D_{s,p+3}\left( x,y;z\right) &=&\dprod\limits_{j=0}^{p+2}\left( z-\alpha
^{sj}\left( x,y\right) \beta ^{s\left( p+2-j\right) }\left( x,y\right)
\right) \\
&=&\dprod\limits_{j=-1}^{p+1}\left( z-\left( -y\right) ^{s}\alpha
^{sj}\left( x,y\right) \beta ^{s\left( p-j\right) }\left( x,y\right) \right)
\\
&=&\left( z-\left( -y\right) ^{s}\alpha ^{-s}\left( x,y\right) \beta
^{s\left( p+1\right) }\left( x,y\right) \right) \left( z-\left( -y\right)
^{s}\alpha ^{s\left( p+1\right) }\left( x,y\right) \beta ^{-s}\left(
x,y\right) \right) \\
&&\times \dprod\limits_{j=0}^{p}\left( -y\right) ^{s}\left( \left( -y\right)
^{-s}z-\alpha ^{sj}\left( x,y\right) \beta ^{s\left( p-j\right) }\left(
x,y\right) \right) \\
&=&\left( -y\right) ^{s\left( p+1\right) }\left( z^{2}-L_{s\left( p+2\right)
}\left( x,y\right) z+\left( -y\right) ^{s\left( p+2\right) }\right)
D_{s,p+1}\left( x,y;\left( -y\right) ^{-s}z\right) .
\end{eqnarray*}

Then%
\begin{eqnarray*}
&&\mathcal{Z}\left( \dbinom{n+1}{p+2}_{F_{s}\left( x,y\right) }\right) \\
&=&\frac{\left( -1\right) ^{s+1}z^{2}}{D_{s,p+3}\left( x,y;z\right) } \\
&=&\frac{\left( -1\right) ^{s+1}z^{2}}{\left( -y\right) ^{s\left( p+1\right)
}\left( z^{2}-L_{s\left( p+2\right) }\left( x,y\right) z+\left( -y\right)
^{s\left( p+2\right) }\right) D_{s,p+1}\left( x,y;\left( -y\right)
^{-s}z\right) } \\
&=&\left( -y\right) ^{s}\frac{1}{\left( -y\right) ^{s\left( p+1\right)
}F_{s\left( p+2\right) }\left( x,y\right) }\frac{F_{s\left( p+2\right)
}\left( x,y\right) z}{z^{2}-L_{s\left( p+2\right) }\left( x,y\right)
z+\left( -y\right) ^{s\left( p+2\right) }}\frac{\left( -1\right)
^{s+1}\left( -y\right) ^{-s}z}{D_{s,p+1}\left( x,y;\left( -y\right)
^{-s}z\right) },
\end{eqnarray*}%
from where (according to (\ref{2.8}) and convolution theorem)%
\begin{eqnarray*}
\dbinom{n+1}{p+2}_{F_{s}\left( x,y\right) } &=&\left( -y\right) ^{-sp}\frac{1%
}{F_{s\left( p+2\right) }\left( x,y\right) }F_{s\left( p+2\right) n}\left(
x,y\right) \ast \left( -y\right) ^{sn}\dbinom{n}{p}_{F_{s}\left( x,y\right) }
\\
&=&\frac{1}{F_{s\left( p+2\right) }\left( x,y\right) }F_{s\left( p+2\right)
n}\left( x,y\right) \ast \left( -y\right) ^{s\left( n-p\right) }\dbinom{n}{p}%
_{F_{s}\left( x,y\right) },
\end{eqnarray*}%
as wanted.

(b) Let us consider the polynomial $D_{s,p+5}\left( x,y;z\right) $ and
observe that%
\begin{eqnarray*}
D_{s,p+5}\left( x,y;z\right) &=&\dprod\limits_{j=0}^{p+4}\left( z-\alpha
^{sj}\left( x,y\right) \beta ^{s\left( p+4-j\right) }\left( x,y\right)
\right) \\
&=&\dprod\limits_{j=-2}^{p+2}\left( z-y^{2s}\alpha ^{sj}\left( x,y\right)
\beta ^{s\left( p-j\right) }\left( x,y\right) \right) \\
&=&\left( z-y^{2s}\alpha ^{-2s}\left( x,y\right) \beta ^{s\left( p+2\right)
}\left( x,y\right) \right) \left( z-y^{2s}\alpha ^{s\left( p+2\right)
}\left( x,y\right) \beta ^{-2s}\left( x,y\right) \right) \\
&&\times \left( z-y^{2s}\alpha ^{-s}\left( x,y\right) \beta ^{s\left(
p+1\right) }\left( x,y\right) \right) \left( z-y^{2s}\alpha ^{s\left(
p+1\right) }\left( x,y\right) \beta ^{-s}\left( x,y\right) \right) \\
&&\times \dprod\limits_{j=0}^{p}y^{2s}\left( y^{-2s}z-\alpha ^{sj}\left(
x,y\right) \beta ^{s\left( p-j\right) }\left( x,y\right) \right) \\
&=&y^{2s\left( p+1\right) }\left( z^{2}-L_{s\left( p+4\right) }\left(
x,y\right) z+\left( -y\right) ^{s\left( p+4\right) }\right) \\
&&\times \left( z^{2}-\left( -y\right) ^{s}L_{s\left( p+2\right) }\left(
x,y\right) z+y^{2s}\left( -y\right) ^{s\left( p+2\right) }\right)
D_{s,p+1}\left( x,y;y^{-2s}z\right) \\
&=&y^{2s\left( p+2\right) }\left( z^{2}-L_{s\left( p+4\right) }\left(
x,y\right) z+\left( -y\right) ^{s\left( p+4\right) }\right) \\
&&\times \left( \left( \left( -y\right) ^{-s}z\right) ^{2}-L_{s\left(
p+2\right) }\left( x,y\right) \left( \left( -y\right) ^{-s}z\right) +\left(
-y\right) ^{s\left( p+2\right) }\right) D_{s,p+1}\left( x,y;y^{-2s}z\right) .
\end{eqnarray*}

Then%
\begin{eqnarray*}
&&\mathcal{Z}\left( \dbinom{n+2}{p+4}_{F_{s}\left( x,y\right) }\right) \\
&=&\frac{\left( -1\right) ^{s+1}z^{3}}{D_{s,p+5}\left( x,y;z\right) } \\
&=&\frac{1}{y^{2s\left( p+2\right) }}\frac{1}{F_{s\left( p+4\right) }\left(
x,y\right) }\frac{F_{s\left( p+4\right) }\left( x,y\right) z}{%
z^{2}-L_{s\left( p+4\right) }\left( x,y\right) z+\left( -y\right) ^{s\left(
p+4\right) }} \\
&&\times \frac{\left( -y\right) ^{s}}{F_{s\left( p+2\right) }\left(
x,y\right) }\frac{F_{s\left( p+2\right) }\left( x,y\right) \left( -y\right)
^{-s}z}{\left( \left( -y\right) ^{-s}z\right) ^{2}-L_{s\left( p+2\right)
}\left( x,y\right) \left( -y\right) ^{-s}z+\left( -y\right) ^{s\left(
p+2\right) }}\frac{y^{2s}\left( -1\right) ^{s+1}y^{-2s}z}{D_{s,p+1}\left(
x,y;y^{-2s}z\right) },
\end{eqnarray*}%
from where (according to (\ref{2.8}) and convolution theorem)%
\begin{eqnarray*}
&&\dbinom{n+2}{p+4}_{F_{s}\left( x,y\right) } \\
&=&\frac{1}{y^{2sp}\left( -y\right) ^{s}}\frac{1}{F_{s\left( p+4\right)
}\left( x,y\right) }F_{s\left( p+4\right) n}\left( x,y\right) \ast \frac{1}{%
F_{s\left( p+2\right) }\left( x,y\right) }\left( -y\right) ^{sn}F_{s\left(
p+2\right) n}\left( x,y\right) \ast y^{2sn}\dbinom{n}{p}_{F_{s}\left(
x,y\right) } \\
&=&\frac{1}{F_{s\left( p+4\right) }\left( x,y\right) F_{s\left( p+2\right)
}\left( x,y\right) }F_{s\left( p+4\right) n}\left( x,y\right) \ast \left(
-y\right) ^{s\left( n-1\right) }F_{s\left( p+2\right) n}\left( x,y\right)
\ast y^{2s\left( n-p\right) }\dbinom{n}{p}_{F_{s}\left( x,y\right) },
\end{eqnarray*}%
as wanted.
\end{proof}

Some examples of (\ref{4.23}) and (\ref{4.24}) are%
\begin{equation}
\dbinom{n+1}{3}_{F_{s}\left( x,y\right) }=\frac{1}{F_{3s}\left( x,y\right)
F_{s}\left( x,y\right) }\sum_{t=0}^{n}\left( -y\right) ^{s\left( t-1\right)
}F_{3s\left( n-t\right) }\left( x,y\right) F_{st}\left( x,y\right) .
\label{4.25}
\end{equation}%
\begin{equation}
\dbinom{n+2}{4}_{F_{s}\left( x,y\right) }=\frac{1}{F_{4s}\left( x,y\right)
F_{2s}\left( x,y\right) }\sum_{i=0}^{n}\sum_{j=0}^{i}\left( -y\right)
^{s\left( 2n-j-i-1\right) }F_{4sj}\left( x,y\right) F_{2s\left( i-j\right)
}\left( x,y\right) .  \label{4.262}
\end{equation}%
\begin{equation}
\dbinom{n+1}{5}_{F_{s}\left( x,y\right) }=\frac{1}{F_{5s}\left( x,y\right) }%
\sum_{t=0}^{n}\left( -y\right) ^{s\left( t-3\right) }F_{5s\left( n-t\right)
}\left( x,y\right) \dbinom{t}{3}_{F_{s}\left( x,y\right) }.  \label{4.261}
\end{equation}%
\begin{eqnarray}
\dbinom{n+2}{5}_{F_{s}\left( x,y\right) } &=&\frac{1}{F_{5s}\left(
x,y\right) F_{3s}\left( x,y\right) F_{s}\left( x,y\right) }  \label{4.30} \\
&&\times \sum_{i=0}^{n}\sum_{j=0}^{i}\left( -y\right) ^{s\left( j+i-3\right)
}F_{5s\left( n-i\right) }\left( x,y\right) F_{3s\left( i-j\right) }\left(
x,y\right) F_{sj}\left( x,y\right) .  \notag
\end{eqnarray}%
\begin{eqnarray}
\dbinom{n+2}{9}_{F_{s}\left( x,y\right) } &=&\frac{1}{F_{9s}\left(
x,y\right) F_{7s}\left( x,y\right) }  \label{4.31} \\
&&\times \sum_{t=0}^{n}\sum_{j=0}^{t}\left( -y\right) ^{s\left(
2n-t-j-11\right) }F_{9sj}\left( x,y\right) F_{7s\left( t-j\right) }\left(
x,y\right) \dbinom{n-t}{5}_{F_{s}\left( x,y\right) }.  \notag
\end{eqnarray}

In the following corollary we denote as $\ast _{j=0}^{k}\left( a_{n}\right)
_{j}$, the convolution $\left( a_{n}\right) _{0}\ast \left( a_{n}\right)
_{1}\ast \cdots \ast \left( a_{n}\right) _{k}$ (of $k+1$ given sequences).

\begin{corollary}
\label{Cor4.9}\textit{Let} $p\in \mathbb{N}$\textit{\ be given. The
following identities hold}

\textit{(a)} 
\begin{equation}
\dbinom{n+p}{2p}_{F_{s}\left( x,y\right) }=\left( -y\right) ^{spn}\ast
_{j=0}^{p-1}\frac{\left( -y\right) ^{sj\left( n-1\right) }}{F_{2s\left(
p-j\right) }\left( x,y\right) }F_{2s\left( p-j\right) n}\left( x,y\right) .
\label{4.27}
\end{equation}

\textit{(b)}%
\begin{equation}
\dbinom{n+p-1}{2p-1}_{F_{s}\left( x,y\right) }=\ast _{j=0}^{p-1}\frac{\left(
-y\right) ^{sj\left( n-1\right) }}{F_{s\left( 2p-1-2j\right) }\left(
x,y\right) }F_{s\left( 2p-1-2j\right) n}\left( x,y\right) .  \label{4.28}
\end{equation}
\end{corollary}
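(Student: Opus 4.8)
The plan is to compute the $Z$ transform of both sides of each identity and then invoke the injectivity of $\mathcal{Z}$ (property (a) of section \ref{Sec2}). For the left-hand sides I will use the shifted version (\ref{4.2}) of Corollary \ref{Cor4.1} together with the explicit factorizations of $D_{s,2p+1}(x,y;z)$ and $D_{s,2p}(x,y;z)$ established right after Proposition \ref{Prop2.1} (the ``even $k$'' and ``odd $k$'' cases). For the right-hand sides I will use the convolution theorem (\ref{2.3}) to convert a convolution of several sequences into a product of their $Z$ transforms, and rule (\ref{2.2}) for multiplication by a geometric sequence to evaluate each resulting factor.

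For part (a): since $\binom{n+p}{2p}_{F_{s}(x,y)}$ is the $p$-shift of $\binom{n}{2p}_{F_{s}(x,y)}$, formula (\ref{4.2}) gives $\mathcal{Z}(\binom{n+p}{2p}_{F_{s}(x,y)})=(-1)^{s+1}z^{p+1}/D_{s,2p+1}(x,y;z)$, which, after inserting the product form of $D_{s,2p+1}$, equals $z^{p+1}$ divided by $(z-(-y)^{sp})\prod_{j=0}^{p-1}(z^{2}-(-y)^{sj}L_{2s(p-j)}(x,y)z+y^{2ps})$. On the other side, $\mathcal{Z}((-y)^{spn})=z/(z-(-y)^{sp})$ by (\ref{2.6}), and for each $j$ the factor $(-y)^{sj(n-1)}F_{2s(p-j)n}(x,y)/F_{2s(p-j)}(x,y)$ of the convolution has $Z$ transform $z/(z^{2}-(-y)^{sj}L_{2s(p-j)}(x,y)z+y^{2ps})$: one starts from $\mathcal{Z}(F_{2s(p-j)n}(x,y))$ given by (\ref{2.10}), replaces $z$ by $z/(-y)^{sj}$ according to (\ref{2.2}) to absorb the geometric factor $(-y)^{sjn}$, clears the common denominator $(-y)^{2sj}$, and cancels the surplus $(-y)^{sj}/F_{2s(p-j)}(x,y)$ against the constant $(-y)^{-sj}/F_{2s(p-j)}(x,y)$ coming from $(-y)^{sj(n-1)}=(-y)^{-sj}(-y)^{sjn}$. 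Multiplying these $p+1$ transforms via (\ref{2.3}) reproduces exactly the expression found for the left-hand side, so (\ref{4.27}) follows by injectivity. Part (b) goes the same way: $\binom{n+p-1}{2p-1}_{F_{s}(x,y)}$ is the $(p-1)$-shift of $\binom{n}{2p-1}_{F_{s}(x,y)}$, its $Z$ transform is $z^{p}/D_{s,2p}(x,y;z)$, the factorization of $D_{s,2p}$ produces a product of $p$ quadratics with constant term $(-y)^{s(2p-1)}$, and the same substitution $\lambda=(-y)^{sj}$ shows that the $j$-th factor of the convolution on the right has $Z$ transform $z/(z^{2}-(-y)^{sj}L_{s(2p-1-2j)}(x,y)z+(-y)^{s(2p-1)})$; the product of these equals $z^{p}/D_{s,2p}(x,y;z)$, as required.

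I do not expect a genuine obstacle here: the whole argument is bookkeeping with the $Z$ transform. The only point requiring care is the matching of the $(-y)$-powers — namely that the shift by $n-1$ in $(-y)^{sj(n-1)}$ is precisely what supplies the constant $(-y)^{-sj}$ needed to cancel the extra factor $(-y)^{sj}$ left over after clearing denominators in the $\lambda$-substitution, and that the constant terms collapse correctly, $(-y)^{2sj}(-y)^{2s(p-j)}=y^{2ps}$ in (a) and $(-y)^{2sj}(-y)^{s(2p-1-2j)}=(-y)^{s(2p-1)}$ in (b). Once these two elementary exponent identities are verified, both computations close immediately.
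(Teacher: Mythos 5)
Your proposal is correct and follows essentially the same route as the paper: both arguments rest on Corollary \ref{Cor4.1} together with the shift rule (\ref{4.2}), the factorizations (\ref{2.17}) and (\ref{2.18}) of $D_{s,2p+1}$ and $D_{s,2p}$, the scaling rule (\ref{2.2}) applied with $\lambda=(-y)^{sj}$ to (\ref{2.10}), and the convolution theorem; the only difference is cosmetic (you transform the right-hand side and compare, while the paper rewrites the transform of the left-hand side as a product of recognizable transforms and inverts). Your exponent bookkeeping, including $(-y)^{2sj}y^{2s(p-j)}=y^{2ps}$ and $(-y)^{2sj}(-y)^{s(2p-1-2j)}=(-y)^{s(2p-1)}$ and the role of $(-y)^{sj(n-1)}$ in cancelling the leftover $(-y)^{sj}$, matches the paper's computation exactly.
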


\begin{proof}
(a) According to (\ref{4.1}) and (\ref{2.11}) we have that%
\begin{equation*}
z^{p}\mathcal{Z}\left( \dbinom{n}{2p}_{F_{s}\left( x,y\right) }\right) =%
\frac{z^{p+1}}{\left( z-\left( -y\right) ^{sp}\right)
\dprod\limits_{j=0}^{p-1}\left( z^{2}-\left( -y\right) ^{sj}L_{2s\left(
p-j\right) }\left( x,y\right) z+y^{2ps}\right) },
\end{equation*}%
or (by using (\ref{4.2}))%
\begin{eqnarray*}
\mathcal{Z}\left( \dbinom{n+p}{2p}_{F_{s}\left( x,y\right) }\right) &=&\frac{%
z}{z-\left( -y\right) ^{sp}}\dprod\limits_{j=0}^{p-1}\frac{z}{z^{2}-\left(
-y\right) ^{sj}L_{2s\left( p-j\right) }\left( x,y\right) z+y^{2ps}} \\
&=&\frac{z}{z-\left( -y\right) ^{sp}}\dprod\limits_{j=0}^{p-1}\frac{\left(
-y\right) ^{sj}}{\left( -y\right) ^{2sj}}\frac{\left( -y\right) ^{-sj}z}{%
\left( -y\right) ^{-2sj}z^{2}-\left( -y\right) ^{-sj}L_{2s\left( p-j\right)
}\left( x,y\right) z+y^{2s\left( p-j\right) }} \\
&=&\frac{z}{z-\left( -y\right) ^{sp}}\dprod\limits_{j=0}^{p-1}\left(
-y\right) ^{-sj}\frac{\frac{z}{\left( -y\right) ^{sj}}}{\left( \frac{z}{%
\left( -y\right) ^{sj}}\right) ^{2}-\left( -y\right) ^{-sj}L_{2s\left(
p-j\right) }\left( x,y\right) \frac{z}{\left( -y\right) ^{sj}}+y^{2s\left(
p-j\right) }}.
\end{eqnarray*}

Then we have%
\begin{eqnarray*}
\dbinom{n+p}{2p}_{F_{s}\left( x,y\right) } &=&\left( -y\right) ^{spn}\ast
_{j=0}^{p-1}\frac{\left( -y\right) ^{-sj}\left( -y\right) ^{sjn}}{%
F_{2s\left( p-j\right) }\left( x,y\right) }F_{2s\left( p-j\right) n}\left(
x,y\right) \\
&=&\left( -y\right) ^{spn}\ast _{j=0}^{p-1}\frac{\left( -y\right) ^{sj\left(
n-1\right) }}{F_{2s\left( p-j\right) }\left( x,y\right) }F_{2s\left(
p-j\right) n}\left( x,y\right) ,
\end{eqnarray*}%
as wanted.

(b) According to (\ref{4.1}) and (\ref{2.12}) we have that%
\begin{eqnarray*}
z^{p-1}\mathcal{Z}\left( \dbinom{n}{2p-1}_{F_{s}\left( x,y\right) }\right)
&=&\frac{z^{p}}{\dprod\limits_{j=0}^{p-1}\left( z^{2}-\left( -y\right)
^{sj}L_{s\left( 2p-1-2j\right) }\left( x,y\right) z+\left( -y\right)
^{\left( 2p-1\right) s}\right) } \\
&=&\dprod\limits_{j=0}^{p-1}\frac{z}{z^{2}-\left( -y\right) ^{sj}L_{s\left(
2p-1-2j\right) }\left( x,y\right) z+\left( -y\right) ^{\left( 2p-1\right) s}}
\\
&=&\dprod\limits_{j=0}^{p-1}\left( -y\right) ^{-sj}\frac{\frac{z}{\left(
-y\right) ^{sj}}}{\left( \frac{z}{\left( -y\right) ^{sj}}\right)
^{2}-L_{s\left( 2p-1-2j\right) }\left( x,y\right) \frac{z}{\left( -y\right)
^{sj}}+\left( -y\right) ^{\left( 2p-1-2j\right) s}},
\end{eqnarray*}%
from where (\ref{4.28}) follows.
\end{proof}

Observe that the case $p=0$ of (\ref{4.23}) and (\ref{4.24}) corresponds to
the cases $p=1$ and $p=2$ of (\ref{4.27}), respectively. Also, the case $p=1$
of (\ref{4.23}) and (\ref{4.24}) corresponds to the cases $p=2$ and $p=3$ of
(\ref{4.28}), respectively. Two additional examples of (\ref{4.27}) and (\ref%
{4.28}) are%
\begin{eqnarray}
\dbinom{n+3}{6}_{F_{s}\left( x,y\right) } &=&\frac{1}{F_{6s}\left(
x,y\right) F_{4s}\left( x,y\right) F_{2s}\left( x,y\right) }  \label{4.29} \\
&&\times \sum_{i=0}^{n}\sum_{j=0}^{i}\sum_{t=0}^{j}\left( -y\right)
^{s\left( j+t+3n-3i-3\right) }F_{6s\left( i-j\right) }\left( x,y\right)
F_{4s\left( j-t\right) }\left( x,y\right) F_{2st}\left( x,y\right) .  \notag
\end{eqnarray}%
\begin{eqnarray}
\dbinom{n+3}{7}_{F_{s}\left( x,y\right) } &=&\frac{1}{F_{7s}\left(
x,y\right) F_{5s}\left( x,y\right) F_{3s}\left( x,y\right) F_{s}\left(
x,y\right) }  \label{4.32} \\
&&\times \sum_{i=0}^{n}\sum_{j=0}^{i}\sum_{t=0}^{j}\left( -y\right)
^{s\left( i+j+t-6\right) }F_{7s\left( n-i\right) }\left( x,y\right)
F_{5s\left( i-j\right) }\left( x,y\right) F_{3s\left( j-t\right) }\left(
x,y\right) F_{st}\left( x,y\right) .  \notag
\end{eqnarray}

In the last corollary of this section we will see that some bivariate $s$%
-Fibopolynomials can be decomposed as linear combinations of certain
bivariate $s$-Fibonacci polynomials. This is shown by having an adequate
partial fractions decompositions of the $Z$ transform of the corresponding
bivariate $s$-Fibopolynomials. This decomposition is presented in lemma \ref%
{Lem4.1}.

We introduce the notation (for given $p\in \mathbb{N}$ and $j=0,1,\ldots
,p-1 $)%
\begin{equation}
\mathcal{P}_{2s\left( p-j\right) }\left( x,y\right) =\dprod_{i=0,i\neq
j}^{p-1}\left( \left( -y\right) ^{sj}L_{2s\left( p-j\right) }\left(
x,y\right) -\left( -y\right) ^{si}L_{2s\left( p-i\right) }\left( x,y\right)
\right) .  \label{4.33}
\end{equation}%
\begin{equation}
\mathcal{R}_{s\left( 2p-1-2j\right) }\left( x,y\right) =\dprod_{i=0,i\neq
j}^{p-1}\left( \left( -y\right) ^{sj}L_{s\left( 2p-1-2j\right) }\left(
x,y\right) -\left( -y\right) ^{si}L_{s\left( 2p-1-2i\right) }\left(
x,y\right) \right) .  \label{4.34}
\end{equation}

\begin{lemma}
\label{Lem4.1}Let $k\in \mathbb{N}$ be given. For $p\geq k$, we have the
following partial fractions decompositions

(a) 
\begin{eqnarray}
&&\frac{z^{p-k}}{\dprod\limits_{j=0}^{p-1}\left( z^{2}-\left( -y\right)
^{sj}L_{2s\left( p-j\right) }\left( x,y\right) z+y^{2ps}\right) }
\label{4.35} \\
&=&\sum_{j=0}^{p-1}\frac{\left( -y\right) ^{sj\left( k-2\right) -2sp\left(
k-1\right) }}{\mathcal{P}_{2s\left( p-j\right) }\left( x,y\right)
F_{2s\left( p-j\right) }\left( x,y\right) }\frac{-F_{2s\left( p-j\right)
\left( k-1\right) }\left( x,y\right) z+\left( -y\right) ^{sj}F_{2s\left(
p-j\right) k}\left( x,y\right) }{z^{2}-\left( -y\right) ^{sj}L_{2s\left(
p-j\right) }\!\left( x,y\right) z+y^{2ps}},  \notag
\end{eqnarray}

(b)%
\begin{eqnarray}
&&\frac{z^{p-k}}{\dprod\limits_{j=0}^{p-1}\left( z^{2}-\left( -y\right)
^{sj}L_{s\left( 2p-1-2j\right) }\left( x,y\right) z+\left( -y\right)
^{\left( 2p-1\right) s}\right) }  \label{4.36} \\
&=&\sum_{j=0}^{p-1}\frac{\left( -y\right) ^{sj\left( k-2\right) -s\left(
2p-1\right) \left( k-1\right) }}{\mathcal{R}_{s\left( 2p-1-2j\right) }\left(
x,y\right) F_{s\left( 2p-1-2j\right) }\left( x,y\right) }\frac{-F_{s\left(
2p-1-2j\right) \left( k-1\right) }\left( x,y\right) z+\left( -y\right)
^{sj}F_{s\left( 2p-1-2j\right) k}\left( x,y\right) }{z^{2}-\left( -y\right)
^{sj}L_{s\left( 2p-1-2j\right) }\!\left( x,y\right) z+\left( -y\right)
^{s\left( 2p-1\right) }}.  \notag
\end{eqnarray}
\end{lemma}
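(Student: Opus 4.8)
The plan is to prove (a) by a direct partial‑fraction computation, and to observe that (b) follows by the identical argument with $2s(p-j)$ everywhere replaced by $s(2p-1-2j)$, $y^{2ps}$ by $(-y)^{(2p-1)s}$, and $\mathcal{P}_{2s(p-j)}(x,y)$ by $\mathcal{R}_{s(2p-1-2j)}(x,y)$ (using the factorization of $D_{s,2p}(x,y;z)$ recorded before Proposition \ref{Prop2.2} in place of that of $D_{s,2p+1}(x,y;z)$). Writing $\alpha,\beta$ for $\alpha(x,y),\beta(x,y)$ and $Q_{j}(z)=z^{2}-(-y)^{sj}L_{2s(p-j)}(x,y)z+y^{2ps}$, the relation $\alpha\beta=-y$ and Binet's formulas give $Q_{j}(z)=\bigl(z-\alpha^{sj}\beta^{s(2p-j)}\bigr)\bigl(z-\alpha^{s(2p-j)}\beta^{sj}\bigr)$, so $\prod_{j=0}^{p-1}Q_{j}(z)$ equals $(-1)^{s+1}D_{s,2p+1}(x,y;z)/(z-(-y)^{sp})$, with $2p$ pairwise distinct roots $\alpha^{sa}\beta^{s(2p-a)}$, $a\in\{0,\dots,2p\}\setminus\{p\}$. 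Since $p-k\le p-1<2p$, the left‑hand side of (\ref{4.35}) is a proper rational function with only simple poles, hence equals $\sum_{j=0}^{p-1}(A_{j}z+B_{j})/Q_{j}(z)$ for unique $A_{j},B_{j}$, which I would pin down by clearing $Q_{j}(z)$ and evaluating at its two roots $\rho:=\alpha^{sj}\beta^{s(2p-j)}$, $\bar{\rho}:=\alpha^{s(2p-j)}\beta^{sj}$, obtaining the $2\times2$ linear system $A_{j}\rho+B_{j}=\rho^{\,p-k}/\prod_{i\ne j}Q_{i}(\rho)$ and $A_{j}\bar{\rho}+B_{j}=\bar{\rho}^{\,p-k}/\prod_{i\ne j}Q_{i}(\bar{\rho})$.

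The crux is the identity
\begin{equation*}
Q_{i}(\rho)=(-y)^{sj}\beta^{2s(p-j)}\Bigl((-y)^{sj}L_{2s(p-j)}(x,y)-(-y)^{si}L_{2s(p-i)}(x,y)\Bigr)\qquad(i\ne j),
\end{equation*}
together with its mirror image under $\alpha\leftrightarrow\beta$ (the bracket being invariant under this exchange) $Q_{i}(\bar{\rho})=(-y)^{sj}\alpha^{2s(p-j)}\bigl((-y)^{sj}L_{2s(p-j)}(x,y)-(-y)^{si}L_{2s(p-i)}(x,y)\bigr)$. Both would be checked by the same elementary $\alpha,\beta$‑bookkeeping: write $Q_{i}(\rho)=\bigl(\rho-\alpha^{si}\beta^{s(2p-i)}\bigr)\bigl(\rho-\alpha^{s(2p-i)}\beta^{si}\bigr)$, collapse every difference $\alpha^{a}\beta^{b}-\alpha^{c}\beta^{d}$ with $a+b=c+d$ — after extracting the common powers of $\alpha,\beta$ — into a monomial times $\pm\sqrt{x^{2}+4y}\,F_{|a-c|}(x,y)$, and compare with the analogous reduction of $(-y)^{sj}L_{2s(p-j)}(x,y)-(-y)^{si}L_{2s(p-i)}(x,y)=\alpha^{sj}\beta^{s(2p-j)}+\alpha^{s(2p-j)}\beta^{sj}-\alpha^{si}\beta^{s(2p-i)}-\alpha^{s(2p-i)}\beta^{si}$. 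Granting this, taking the product over $i\in\{0,\dots,p-1\}\setminus\{j\}$ makes all the $F$‑ and $L$‑products telescope:
\begin{equation*}
\prod_{i\ne j}Q_{i}(\rho)=(-y)^{sj(p-1)}\beta^{2s(p-j)(p-1)}\,\mathcal{P}_{2s(p-j)}(x,y),
\end{equation*}
and likewise with $\alpha$ in place of $\beta$ for $\bar{\rho}$, so only the single factor $\mathcal{P}_{2s(p-j)}(x,y)$ survives.

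Solving the $2\times2$ system is then mechanical. Using $\rho-\bar{\rho}=-(-y)^{sj}\sqrt{x^{2}+4y}\,F_{2s(p-j)}(x,y)$ and the same monomial‑minus‑monomial reduction applied to $\rho^{p-k}\beta^{-2s(p-j)(p-1)}-\bar{\rho}^{p-k}\alpha^{-2s(p-j)(p-1)}$ and to $\bar{\rho}\rho^{p-k}\beta^{-2s(p-j)(p-1)}-\rho\bar{\rho}^{p-k}\alpha^{-2s(p-j)(p-1)}$ yields the factors $F_{2s(p-j)(k-1)}(x,y)$ and $F_{2s(p-j)k}(x,y)$ respectively (this is where $p\ge k$, i.e. $k-1\ge0$, enters), while the accumulated powers of $-y$ collapse to the exponent $sj(k-2)-2sp(k-1)$. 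Reading off the coefficients gives $A_{j}=-(-y)^{sj(k-2)-2sp(k-1)}F_{2s(p-j)(k-1)}(x,y)/\bigl(\mathcal{P}_{2s(p-j)}(x,y)F_{2s(p-j)}(x,y)\bigr)$ and the matching $B_{j}$, which is precisely the $j$‑th summand of (\ref{4.35}); the same steps carry over verbatim to part (b).

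The step I expect to be the genuine obstacle is establishing the proportionality $Q_{i}(\rho)\propto(-y)^{sj}L_{2s(p-j)}(x,y)-(-y)^{si}L_{2s(p-i)}(x,y)$: the signs and the leftover monomials in $\alpha,\beta$ behave differently according to whether $i<j$ or $i>j$, and one must verify that in both regimes the proportionality constant comes out to the same value $(-y)^{sj}\beta^{2s(p-j)}$ (and, for $\bar{\rho}$, $(-y)^{sj}\alpha^{2s(p-j)}$). Once that identity is secured, the telescoping of the product and the solution of the linear system are routine, and nothing else in the proof offers resistance.
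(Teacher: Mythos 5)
Your proposal is correct and is essentially the paper's own argument: the same partial-fraction decomposition over the quadratics $Q_{j}(z)=z^{2}-(-y)^{sj}L_{2s(p-j)}(x,y)z+y^{2ps}$, with the cofactor product evaluated at each root telescoping to $\bigl((-y)^{sj}\beta^{2s(p-j)}(x,y)\bigr)^{p-1}\mathcal{P}_{2s(p-j)}(x,y)$ and Binet's formulas (plus $F_{-n}(x,y)=-(-y)^{-n}F_{n}(x,y)$) producing the stated numerators and $(-y)$-exponents; the paper merely splits each $Q_{j}$ into its two linear factors and recombines the conjugate pair, rather than solving your $2\times 2$ system, which is only an organizational difference. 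The step you flag as the genuine obstacle is in fact a one-liner needing no $i<j$ versus $i>j$ distinction: since every $Q_{i}$ has the same constant term $y^{2ps}=\rho\bar{\rho}$, you get $Q_{i}(\rho)=\rho\bigl(\rho+\bar{\rho}-(-y)^{si}L_{2s(p-i)}(x,y)\bigr)=(-y)^{sj}\beta^{2s(p-j)}(x,y)\bigl((-y)^{sj}L_{2s(p-j)}(x,y)-(-y)^{si}L_{2s(p-i)}(x,y)\bigr)$ directly, which is exactly the manipulation the paper performs.
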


\begin{proof}
Let us prove (\ref{4.35}). For each $j=0,1,\ldots ,p-1$ we have%
\begin{eqnarray}
&&\frac{z^{p-k}}{\dprod\limits_{j=0}^{p-1}\left( z^{2}-\left( -y\right)
^{sj}L_{2s\left( p-j\right) }\left( x,y\right) z+y^{2ps}\right) }
\label{4.37} \\
&=&\frac{z^{p-k}}{\left( z-\left( -y\right) ^{sj}\alpha ^{2s\left(
p-j\right) }\left( x,y\right) \right) \left( z-\left( -y\right) ^{sj}\beta
^{2s\left( p-j\right) }\left( x,y\right) \right) \dprod\limits_{\substack{ %
i=0  \\ i\neq j}}^{p-1}\left( z^{2}-\left( -y\right) ^{si}L_{2s\left(
p-i\right) }\left( x,y\right) z+y^{2ps}\right) }  \notag
\end{eqnarray}

Thus the partial fractions decomposition of (\ref{4.37}) is%
\begin{eqnarray*}
&&\frac{z^{p-k}}{\dprod\limits_{j=0}^{p-1}\left( z^{2}-\left( -y\right)
^{sj}L_{2s\left( p-j\right) }\left( x,y\right) z+y^{2ps}\right) } \\
&=&\sum_{j=0}^{p-1}\frac{1}{\dprod\limits_{i=0.i\neq j}^{p-1}\left( \left(
-y\right) ^{2sj}\alpha ^{4s\left( p-j\right) }\left( x,y\right) -\left(
-y\right) ^{si}L_{2s\left( p-i\right) }\left( x,y\right) \left( -y\right)
^{sj}\alpha ^{2s\left( p-j\right) }\left( x,y\right) +y^{2ps}\right) } \\
&&\times \frac{\left( \left( -y\right) ^{sj}\alpha ^{2s\left( p-j\right)
}\left( x,y\right) \right) ^{p-k}}{\left( -y\right) ^{sj}\alpha ^{2s\left(
p-j\right) }\left( x,y\right) -\left( -y\right) ^{sj}\beta ^{2s\left(
p-j\right) }\left( x,y\right) }\frac{1}{z-\left( -y\right) ^{sj}\alpha
^{2s\left( p-j\right) }\left( x,y\right) } \\
&&+\sum_{j=0}^{p-1}\frac{1}{\dprod\limits_{i=0.i\neq j}^{p-1}\left( \left(
-y\right) ^{2sj}\beta ^{4s\left( p-j\right) }\left( x,y\right) -\left(
-y\right) ^{si}L_{2s\left( p-i\right) }\left( x,y\right) \left( -y\right)
^{sj}\beta ^{2s\left( p-j\right) }\left( x,y\right) +y^{2ps}\right) } \\
&&\times \frac{\left( \left( -y\right) ^{sj}\beta ^{2s\left( p-j\right)
}\left( x,y\right) \right) ^{p-k}}{\left( -y\right) ^{sj}\beta ^{2s\left(
p-j\right) }\left( x,y\right) -\left( -y\right) ^{sj}\alpha ^{2s\left(
p-j\right) }\left( x,y\right) }\frac{1}{z-\left( -y\right) ^{sj}\beta
^{2s\left( p-j\right) }\left( x,y\right) }.
\end{eqnarray*}

Observe that%
\begin{eqnarray*}
&&\dprod\limits_{i=0.i\neq j}^{p-1}\left( \left( -y\right) ^{2sj}\alpha
^{4s\left( p-j\right) }\left( x,y\right) -\left( -y\right) ^{si}L_{2s\left(
p-i\right) }\left( x,y\right) \left( -y\right) ^{sj}\alpha ^{2s\left(
p-j\right) }\left( x,y\right) +y^{2ps}\right) \\
&=&\left( \left( -y\right) ^{sj}\alpha ^{2s\left( p-j\right) }\left(
x,y\right) \right) ^{p-1} \\
&&\times \dprod\limits_{\substack{ i=0  \\ i\neq j}}^{p-1}\left( \left(
-y\right) ^{sj}\left( \alpha ^{2s\left( p-j\right) }\left( x,y\right)
+y^{2s\left( p-j\right) }\alpha ^{-2s\left( p-j\right) }\left( x,y\right)
\right) -\left( -y\right) ^{si}L_{2s\left( p-i\right) }\left( x,y\right)
\right) \\
&=&\left( \left( -y\right) ^{sj}\alpha ^{2s\left( p-j\right) }\left(
x,y\right) \right) ^{p-1}\dprod\limits_{\substack{ i=0  \\ i\neq j}}%
^{p-1}\left( \left( -y\right) ^{sj}\left( \alpha ^{2s\left( p-j\right)
}\left( x,y\right) +\beta ^{2s\left( p-j\right) }\left( x,y\right) \right)
-\left( -y\right) ^{si}L_{2s\left( p-i\right) }\left( x,y\right) \right) \\
&=&\left( \left( -y\right) ^{sj}\alpha ^{2s\left( p-j\right) }\left(
x,y\right) \right) ^{p-1}\mathcal{P}_{2s\left( p-j\right) }\left( x,y\right)
\end{eqnarray*}

Similarly one sees that 
\begin{eqnarray*}
&&\dprod\limits_{i=0.i\neq j}^{p-1}\left( \left( -y\right) ^{2sj}\beta
^{4s\left( p-j\right) }\left( x,y\right) -\left( -y\right) ^{si}L_{2s\left(
p-i\right) }\left( x,y\right) \left( -y\right) ^{sj}\beta ^{2s\left(
p-j\right) }\left( x,y\right) +y^{2ps}\right) \\
&=&\left( \left( -y\right) ^{sj}\beta ^{2s\left( p-j\right) }\left(
x,y\right) \right) ^{p-1}\mathcal{P}_{2s\left( p-j\right) }\left( x,y\right)
\end{eqnarray*}

Thus we have%
\begin{eqnarray}
&&\frac{z^{p-k}}{\dprod\limits_{j=0}^{p-1}\left( z^{2}-\left( -y\right)
^{sj}L_{2s\left( p-j\right) }\left( x,y\right) z+y^{2ps}\right) }
\label{4.38} \\
&=&\sum_{j=0}^{p-1}\frac{\left( \left( -y\right) ^{sj}\alpha ^{2s\left(
p-j\right) }\left( x,y\right) \right) ^{1-k}}{\left( -y\right) ^{sj}\left(
\alpha ^{2s\left( p-j\right) }\left( x,y\right) -\beta ^{2s\left( p-j\right)
}\left( x,y\right) \right) \mathcal{P}_{2s\left( p-j\right) }\left(
x,y\right) }\frac{1}{z-\left( -y\right) ^{sj}\alpha ^{2s\left( p-j\right)
}\left( x,y\right) }  \notag \\
&&-\sum_{j=0}^{p-1}\frac{\left( \left( -y\right) ^{sj}\beta ^{2s\left(
p-j\right) }\left( x,y\right) \right) ^{1-k}}{\left( -y\right) ^{sj}\left(
\alpha ^{2s\left( p-j\right) }\left( x,y\right) -\beta ^{2s\left( p-j\right)
}\left( x,y\right) \right) \mathcal{P}_{2s\left( p-j\right) }\left(
x,y\right) }\frac{1}{z-\left( -y\right) ^{sj}\beta ^{2s\left( p-j\right)
}\left( x,y\right) }  \notag \\
&=&\sum_{j=0}^{p-1}\frac{\left( -y\right) ^{-sjk}}{\sqrt{x^{2}+4y}\mathcal{P}%
_{2s\left( p-j\right) }\left( x,y\right) F_{2s\left( p-j\right) }\left(
x,y\right) }\left( \frac{\alpha ^{2s\left( p-j\right) \left( 1-k\right)
}\left( x,y\right) }{z-\left( -y\right) ^{sj}\alpha ^{2s\left( p-j\right)
}\left( x,y\right) }-\frac{\beta ^{2s\left( p-j\right) \left( 1-k\right)
}\left( x,y\right) }{z-\left( -y\right) ^{sj}\beta ^{2s\left( p-j\right)
}\left( x,y\right) }\right) .  \notag
\end{eqnarray}

Some further simplifications of the expression in parenthesis of the
right-hand side of (\ref{4.38}) give us%
\begin{eqnarray*}
&&\frac{\alpha ^{2s\left( p-j\right) \left( 1-k\right) }\left( x,y\right) }{%
z-\left( -y\right) ^{sj}\alpha ^{2s\left( p-j\right) }\left( x,y\right) }-%
\frac{\beta ^{2s\left( p-j\right) \left( 1-k\right) }\left( x,y\right) }{%
z-\left( -y\right) ^{sj}\beta ^{2s\left( p-j\right) }\left( x,y\right) } \\
&=&\frac{\left( \alpha ^{2s\left( p-j\right) \left( 1-k\right) }\left(
x,y\right) -\beta ^{2s\left( p-j\right) \left( 1-k\right) }\left( x,y\right)
\right) z-\left( -y\right) ^{2s\left( p-j\right) +sj}\left( \alpha
^{-2s\left( p-j\right) k}\left( x,y\right) -\beta ^{-2s\left( p-j\right)
k}\left( x,y\right) \right) }{z^{2}-L_{2s\left( p-j\right) }\!\left(
x,y\right) \left( -y\right) ^{sj}z+y^{2ps}} \\
&=&\sqrt{x^{2}+4y}\frac{F_{2s\left( p-j\right) \left( 1-k\right) }\left(
x,y\right) z-\left( -y\right) ^{2s\left( p-j\right) +sj}F_{-2s\left(
p-j\right) k}\left( x,y\right) }{z^{2}-L_{2s\left( p-j\right) }\!\left(
x,y\right) \left( -y\right) ^{sj}z+y^{2ps}} \\
&=&\sqrt{x^{2}+4y}\frac{\left( -\left( -y\right) ^{2s\left( p-j\right)
\left( 1-k\right) }\right) F_{2s\left( p-j\right) \left( k-1\right) }\left(
x,y\right) z+\left( -y\right) ^{2s\left( p-j\right) +sj}\left( -y\right)
^{-2s\left( p-j\right) k}F_{2s\left( p-j\right) k}\left( x,y\right) }{%
z^{2}-L_{2s\left( p-j\right) }\!\left( x,y\right) \left( -y\right)
^{sj}z+y^{2ps}} \\
&=&\sqrt{x^{2}+4y}\left( -y\right) ^{2s\left( p-j\right) \left( 1-k\right) }%
\frac{-F_{2s\left( p-j\right) \left( k-1\right) }\left( x,y\right) z+\left(
-y\right) ^{sj}F_{2s\left( p-j\right) k}\left( x,y\right) }{%
z^{2}-L_{2s\left( p-j\right) }\!\left( x,y\right) \left( -y\right)
^{sj}z+y^{2ps}}
\end{eqnarray*}

Then (\ref{4.38}) becomes%
\begin{eqnarray*}
&&\frac{z^{p-k}}{\dprod\limits_{j=0}^{p-1}\left( z^{2}-\left( -y\right)
^{sj}L_{2s\left( p-j\right) }\left( x,y\right) z+y^{2ps}\right) } \\
&=&\sum_{j=0}^{p-1}\frac{\left( -y\right) ^{-sjk}}{\sqrt{x^{2}+4y}\mathcal{P}%
_{2s\left( p-j\right) }\left( x,y\right) F_{2s\left( p-j\right) }\left(
x,y\right) } \\
&&\times \sqrt{x^{2}+4y}\left( -y\right) ^{2s\left( p-j\right) \left(
1-k\right) }\frac{-F_{2s\left( p-j\right) \left( k-1\right) }\left(
x,y\right) z+\left( -y\right) ^{sj}F_{2s\left( p-j\right) k}\left(
x,y\right) }{z^{2}-L_{2s\left( p-j\right) }\!\left( x,y\right) \left(
-y\right) ^{sj}z+y^{2ps}} \\
&=&\sum_{j=0}^{p-1}\frac{\left( -y\right) ^{sj\left( k-2\right) -2sp\left(
k-1\right) }}{\mathcal{P}_{2s\left( p-j\right) }\left( x,y\right)
F_{2s\left( p-j\right) }\left( x,y\right) }\frac{-F_{2s\left( p-j\right)
\left( k-1\right) }\left( x,y\right) z+\left( -y\right) ^{sj}F_{2s\left(
p-j\right) k}\left( x,y\right) }{z^{2}-L_{2s\left( p-j\right) }\!\left(
x,y\right) \left( -y\right) ^{sj}z+y^{2ps}},
\end{eqnarray*}%
as wanted. The proof of (\ref{4.36}) is similar and left to the reader.
\end{proof}

\begin{corollary}
\label{Cor4.10}Let $k\in \mathbb{N}$ be given. For $p\geq k$ we have%
\begin{equation}
\dbinom{n+p+1-k}{2p}_{F_{s}\left( x,y\right) }=\left( -y\right) ^{spn}\ast
\sum_{j=0}^{p-1}\frac{\left( -y\right) ^{sj\left( n-k\right) }F_{2s\left(
p-j\right) \left( n+1-k\right) }\left( x,y\right) }{\mathcal{P}_{2s\left(
p-j\right) }\left( x,y\right) F_{2s\left( p-j\right) }\left( x,y\right) }.
\label{4.39}
\end{equation}%
\begin{equation}
\dbinom{n+p-k}{2p-1}_{F_{s}\left( x,y\right) }=\sum_{j=0}^{p-1}\frac{\left(
-y\right) ^{sj\left( n-k\right) }F_{s\left( 2p-1-2j\right) \left(
n+1-k\right) }\left( x,y\right) }{\mathcal{R}_{s\left( 2p-1-2j\right)
}\left( x,y\right) F_{s\left( 2p-1-2j\right) }\left( x,y\right) }.
\label{4.40}
\end{equation}
\end{corollary}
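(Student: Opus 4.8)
The plan is to imitate the derivation of Corollary \ref{Cor4.9}, but to feed into it the \emph{full} partial fractions decompositions of Lemma \ref{Lem4.1} instead of the trivial ones used there. I will describe the argument for (\ref{4.39}); the proof of (\ref{4.40}) runs along identical lines, using the factorization of $D_{s,2p}\left(x,y;z\right)$ recorded in Section \ref{Sec2} together with Lemma \ref{Lem4.1}(b) and the quantities $\mathcal{R}_{s\left(2p-1-2j\right)}\left(x,y\right)$ in the roles of (\ref{2.17}) and of Lemma \ref{Lem4.1}(a).

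First I would record the $Z$ transform of the left-hand side of (\ref{4.39}). Since $1\le k\le p$, the shift $p_{0}=p+1-k$ satisfies $0\le p_{0}\le 2p$, so (\ref{4.2}) is available with its lower index taken to be $2p$; combining it with Corollary \ref{Cor4.1} and the factorization (\ref{2.17}) of $D_{s,2p+1}\left(x,y;z\right)$ gives
\[
\mathcal{Z}\!\left(\dbinom{n+p+1-k}{2p}_{F_{s}\left(x,y\right)}\right)=\frac{z}{z-\left(-y\right)^{sp}}\cdot\frac{z^{p+1-k}}{\dprod\limits_{j=0}^{p-1}\left(z^{2}-\left(-y\right)^{sj}L_{2s\left(p-j\right)}\left(x,y\right)z+y^{2ps}\right)}.
\]
Because $\mathcal{Z}\!\left(\left(-y\right)^{spn}\right)=z/(z-\left(-y\right)^{sp})$ by (\ref{2.6}), the convolution theorem (\ref{2.3}) reduces (\ref{4.39}) to showing that the $Z$ transform of the sum on the right-hand side of (\ref{4.39}) equals the second factor displayed above.

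To evaluate that transform I would work summand by summand. Fixing $j$ and writing $S=2s\left(p-j\right)$, I have $F_{S\left(n+1-k\right)}\left(x,y\right)=F_{Sn+S\left(1-k\right)}\left(x,y\right)$, so applying property (\ref{2.2}) with $\lambda=\left(-y\right)^{sj}$ to the period-$S$ analogue of (\ref{2.8}) (with shift $m=S\left(1-k\right)$), clearing the factors $\left(-y\right)^{sj}$, using the negative-index Binet relation $F_{-t}\left(x,y\right)=-\left(-y\right)^{-t}F_{t}\left(x,y\right)$, and simplifying via $2sj+S=2sp$, one obtains
\[
\mathcal{Z}\!\left(\left(-y\right)^{sjn}F_{S\left(n+1-k\right)}\left(x,y\right)\right)=\left(-y\right)^{-S\left(k-1\right)}\,\frac{z\left(-F_{S\left(k-1\right)}\left(x,y\right)z+\left(-y\right)^{sj}F_{Sk}\left(x,y\right)\right)}{z^{2}-\left(-y\right)^{sj}L_{S}\left(x,y\right)z+y^{2ps}}.
\]
Multiplying this by $\left(-y\right)^{-sjk}\bigl/\bigl(\mathcal{P}_{S}\left(x,y\right)F_{S}\left(x,y\right)\bigr)$ and collecting exponents through $-sjk-S\left(k-1\right)=sj\left(k-2\right)-2sp\left(k-1\right)$, the $j$-th summand of the transform becomes exactly $z$ times the $j$-th summand on the right-hand side of (\ref{4.35}); hence, summing over $j$ and invoking Lemma \ref{Lem4.1}(a),
\[
\mathcal{Z}\!\left(\sum_{j=0}^{p-1}\frac{\left(-y\right)^{sj\left(n-k\right)}F_{2s\left(p-j\right)\left(n+1-k\right)}\left(x,y\right)}{\mathcal{P}_{2s\left(p-j\right)}\left(x,y\right)F_{2s\left(p-j\right)}\left(x,y\right)}\right)=\frac{z^{p+1-k}}{\dprod\limits_{j=0}^{p-1}\left(z^{2}-\left(-y\right)^{sj}L_{2s\left(p-j\right)}\left(x,y\right)z+y^{2ps}\right)},
\]
which is precisely the required identity; injectivity of $\mathcal{Z}$ then yields (\ref{4.39}), and the same scheme with Lemma \ref{Lem4.1}(b) produces (\ref{4.40}).

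The heart of the matter is that last paragraph: one must carry the powers of $-y$ through with care, and one must notice that the ``extra'' factor of $z$ produced by (\ref{2.2})--(\ref{2.8}) --- present because $\mathcal{Z}\!\left(F_{Sn+m}\left(x,y\right)\right)$ is a ratio of two degree-$2$ polynomials in $z$, not a strictly proper rational function --- is exactly what promotes the $z^{p-k}$ occurring in Lemma \ref{Lem4.1} to the $z^{p+1-k}$ coming from the shift (\ref{4.2}), while simultaneously the negative-index Binet rewriting must make the numerators line up with the $-F_{2s\left(p-j\right)\left(k-1\right)}z+\left(-y\right)^{sj}F_{2s\left(p-j\right)k}$ appearing in (\ref{4.35}). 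For (\ref{4.40}) there is no factor $z/(z-\left(-y\right)^{sp})$ (the polynomial $D_{s,2p}\left(x,y;z\right)$ consists only of quadratic factors), the relevant period is $S=s\left(2p-1-2j\right)$, and the constant term of each quadratic is $\left(-y\right)^{s\left(2p-1\right)}$ rather than $y^{2ps}$ --- consistent because $2sj+s\left(2p-1-2j\right)=s\left(2p-1\right)$.
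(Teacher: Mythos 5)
Your proof is correct and follows essentially the same route as the paper's: both rest on Corollary \ref{Cor4.1} with the factorizations (\ref{2.17})--(\ref{2.18}), the partial-fraction decompositions of Lemma \ref{Lem4.1}, and the identification of each summand as a shifted, $\left(-y\right)^{sjn}$-scaled bivariate Fibonacci transform via (\ref{2.2}) and (\ref{2.8}), finishing with the convolution theorem. The only difference is cosmetic: you compute the $Z$ transform of the right-hand side forward and invoke injectivity of $\mathcal{Z}$, whereas the paper massages the partial fractions backward into recognizable transforms.
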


\begin{proof}
From (\ref{4.1}) and (\ref{2.17}) we see that%
\begin{eqnarray*}
z^{p+1-k}\mathcal{Z}\left( \dbinom{n}{2p}_{F_{s}\left( x,y\right) }\right)
&=&\frac{z^{p+2-k}}{\left( z-\left( -y\right) ^{sp}\right)
\dprod\limits_{j=0}^{p-1}\left( z^{2}-\left( -y\right) ^{sj}L_{2s\left(
p-j\right) }\left( x,y\right) z+y^{2ps}\right) } \\
&=&\frac{z^{2}}{z-\left( -y\right) ^{sp}}\frac{z^{p-k}}{\dprod%
\limits_{j=0}^{p-1}\left( z^{2}-\left( -y\right) ^{sj}L_{2s\left( p-j\right)
}\left( x,y\right) z+y^{2ps}\right) },
\end{eqnarray*}%
and then, by using (\ref{4.35}) we obtain%
\begin{eqnarray*}
&&z^{p+1-k}\mathcal{Z}\left( \dbinom{n}{2p}_{F_{s}}\right) \\
&=&\frac{z^{2}}{z-\left( -y\right) ^{sp}}\sum_{j=0}^{p-1}\frac{\left(
-y\right) ^{sj\left( k-2\right) -2sp\left( k-1\right) }}{\mathcal{P}%
_{2s\left( p-j\right) }\left( x,y\right) F_{2s\left( p-j\right) }\left(
x,y\right) }\frac{-F_{2s\left( p-j\right) \left( k-1\right) }\left(
x,y\right) z+\left( -y\right) ^{sj}F_{2s\left( p-j\right) k}\left(
x,y\right) }{z^{2}-L_{2s\left( p-j\right) }\!\left( x,y\right) \left(
-y\right) ^{sj}z+y^{2ps}} \\
&=&\frac{z}{z-\left( -y\right) ^{sp}}\sum_{j=0}^{p-1}\frac{\left( -y\right)
^{-sjk}}{\mathcal{P}_{2s\left( p-j\right) }\left( x,y\right) F_{2s\left(
p-j\right) }\left( x,y\right) } \\
&&\times \frac{\frac{z}{\left( -y\right) ^{sj}}\left( F_{2s\left( p-j\right)
\left( 1-k\right) }\left( x,y\right) \frac{z}{\left( -y\right) ^{sj}}+\left(
-y\right) ^{2s\left( p-j\right) \left( 1-k\right) }F_{2s\left( p-j\right)
-2s\left( p-j\right) \left( 1-k\right) }\left( x,y\right) \right) }{\left( 
\frac{z}{\left( -y\right) ^{sj}}\right) ^{2}-L_{2s\left( p-j\right) }\left( 
\frac{z}{\left( -y\right) ^{sj}}\right) +y^{2s\left( p-j\right) }}.
\end{eqnarray*}

Thus, according to (\ref{4.2}), convolution theorem (\ref{2.3}) and (\ref%
{2.8}) we have that%
\begin{equation*}
\dbinom{n+p+1-k}{2p}_{F_{s}\left( x,y\right) }=\left( -y\right) ^{spn}\ast
\sum_{j=0}^{p-1}\frac{\left( -y\right) ^{sj\left( n-k\right) }F_{2s\left(
p-j\right) \left( n+1-k\right) }\left( x,y\right) }{\mathcal{P}_{2s\left(
p-j\right) }\left( x,y\right) F_{2s\left( p-j\right) }\left( x,y\right) },
\end{equation*}%
which proves (\ref{4.39}). Similarly, by using (\ref{4.1}), (\ref{2.18}) and
(\ref{4.36}) we have that 
\begin{eqnarray*}
&&z^{p-k}\mathcal{Z}\left( \dbinom{n}{2p-1}_{F_{s}\left( x,y\right) }\right)
\\
&=&\frac{z^{p+1-k}}{\dprod\limits_{j=0}^{p-1}\left( z^{2}-\left( -y\right)
^{sj}L_{s\left( 2p-1-2j\right) }\left( x,y\right) z+\left( -y\right)
^{s\left( 2p-1\right) }\right) } \\
&=&\sum_{j=0}^{p-1}\frac{\left( -y\right) ^{sj\left( k-2\right) -s\left(
2p-1\right) \left( k-1\right) }}{\mathcal{R}_{s\left( 2p-1-2j\right) }\left(
x,y\right) F_{s\left( 2p-1-2j\right) }\left( x,y\right) }\frac{z\left(
-F_{s\left( 2p-1-2j\right) \left( k-1\right) }\left( x,y\right) z+\left(
-y\right) ^{sj}F_{s\left( 2p-1-2j\right) k}\left( x,y\right) \right) }{%
z^{2}-\left( -y\right) ^{sj}L_{s\left( 2p-1-2j\right) }\!\left( x,y\right)
z+\left( -y\right) ^{s\left( 2p-1\right) }} \\
&=&\sum_{j=0}^{p-1}\frac{\left( -y\right) ^{-sjk}}{\mathcal{R}_{s\left(
2p-1-2j\right) }\left( x,y\right) F_{s\left( 2p-1-2j\right) }\left(
x,y\right) } \\
&&\times \frac{\frac{z}{\left( -y\right) ^{sj}}\left( F_{s\left(
2p-1-2j\right) \left( 1-k\right) }\left( x,y\right) \frac{z}{\left(
-y\right) ^{sj}}+\left( -y\right) ^{s\left( 2p-1-2j\right) \left( 1-k\right)
}F_{s\left( 2p-1-2j\right) -s\left( 2p-1-2j\right) \left( 1-k\right) }\left(
x,y\right) \right) }{\left( \frac{z}{\left( -y\right) ^{sj}}\right)
^{2}-L_{s\left( 2p-1-2j\right) }\left( \frac{z}{\left( -y\right) ^{sj}}%
\right) +\left( -y\right) ^{s\left( 2p-1-2j\right) }}
\end{eqnarray*}%
from where (by using (\ref{4.2}) and (\ref{2.8})) we obtain (\ref{4.40}).
\end{proof}

The case $p=1$ of (\ref{4.39}) is the same that the case $p=0$ of (\ref{4.23}%
) and the case $p=1$ of (\ref{4.27}), namely%
\begin{equation}
\dbinom{n+1}{2}_{F_{s}\left( x,y\right) }=\frac{1}{F_{2s}\left( x,y\right) }%
\sum_{t=0}^{n}\left( -y\right) ^{s\left( n-t\right) }F_{2st}\left(
x,y\right) .  \label{4.46}
\end{equation}

(The case $p=1$ of (\ref{4.40}) is a trivial identity.) Some more examples
from (\ref{4.39}) and (\ref{4.40}) are the following:

If $p=2$, we have for $k=1,2$ the following identities:%
\begin{eqnarray}
\dbinom{n+3-k}{4}_{F_{s}\left( x,y\right) } &=&\frac{1}{L_{4s}\left(
x,y\right) -\left( -y\right) ^{s}L_{2s}\left( x,y\right) }  \label{4.47} \\
&&\times \sum_{t=0}^{n}\left( -y\right) ^{2s\left( n-t\right) }\left( \frac{%
F_{4s\left( t+1-k\right) }\left( x,y\right) }{F_{4s}\left( x,y\right) }%
-\left( -y\right) ^{s\left( t-k\right) }\frac{F_{2s\left( t+1-k\right)
}\left( x,y\right) }{F_{2s}\left( x,y\right) }\right) .  \notag
\end{eqnarray}%
\begin{equation}
\dbinom{n+2-k}{3}_{F_{s}\left( x,y\right) }=\frac{1}{L_{3s}\left( x,y\right)
-\left( -y\right) ^{s}L_{s}\left( x,y\right) }\left( \frac{F_{3s\left(
n+1-k\right) }\left( x,y\right) }{F_{3s}\left( x,y\right) }-\frac{\left(
-y\right) ^{s\left( n-k\right) }F_{s\left( n+1-k\right) }\left( x,y\right) }{%
F_{s}\left( x,y\right) }\right) .  \label{4.48}
\end{equation}

(See also (\ref{4.262}) and (\ref{4.25}).)

If $p=3$, we have for $k=1,2,3$ the identities:%
\begin{equation}
\dbinom{n+4-k}{6}_{F_{s}\left( x,y\right) }=\sum_{t=0}^{n}\left( -y\right)
^{3s\left( n-t\right) }\left( 
\begin{array}{l}
\frac{F_{6s\left( t+1-k\right) }\left( x,y\right) }{\left( L_{6s}\left(
x,y\right) -\left( -y\right) ^{s}L_{4s}\left( x,y\right) \right) \left(
L_{6s}\left( x,y\right) -\left( -y\right) ^{2s}L_{2s}\left( x,y\right)
\right) F_{6s}\left( x,y\right) } \\ 
+\frac{\left( -y\right) ^{s\left( t-k\right) }F_{4s\left( t+1-k\right)
}\left( x,y\right) }{\left( \left( -y\right) ^{s}L_{4s}\left( x,y\right)
-L_{6s}\left( x,y\right) \right) \left( \left( -y\right) ^{s}L_{4s}\left(
x,y\right) -\left( -y\right) ^{2s}L_{2s}\left( x,y\right) \right)
F_{4s}\left( x,y\right) } \\ 
+\frac{\left( -y\right) ^{2s\left( t-k\right) }F_{2s\left( t+1-k\right)
}\left( x,y\right) }{\left( \left( -y\right) ^{2s}L_{2s}\left( x,y\right)
-L_{6s}\left( x,y\right) \right) \left( \left( -y\right) ^{2s}L_{2s}\left(
x,y\right) -\left( -y\right) ^{s}L_{4s}\left( x,y\right) \right)
F_{2s}\left( x,y\right) }%
\end{array}%
\right) .  \label{4.49}
\end{equation}

\begin{eqnarray}
\dbinom{n+3-k}{5}_{F_{s}\left( x,y\right) } &=&\frac{F_{5s\left(
n+1-k\right) }\left( x,y\right) }{\left( L_{5s}\left( x,y\right) -\left(
-y\right) ^{s}L_{3s}\left( x,y\right) \right) \left( L_{5s}\left( x,y\right)
-\left( -y\right) ^{2s}L_{s}\left( x,y\right) \right) F_{5s}\left(
x,y\right) }  \label{4.50} \\
&&+\frac{\left( -y\right) ^{s\left( n-k\right) }F_{3s\left( n+1-k\right)
}\left( x,y\right) }{\left( \left( -y\right) ^{s}L_{3s}\left( x,y\right)
-L_{5s}\left( x,y\right) \right) \left( \left( -y\right) ^{s}L_{3s}\left(
x,y\right) -\left( -y\right) ^{2s}L_{s}\left( x,y\right) \right)
F_{3s}\left( x,y\right) }  \notag \\
&&+\frac{\left( -y\right) ^{2s\left( n-k\right) }F_{s\left( n+1-k\right)
}\left( x,y\right) }{\left( \left( -y\right) ^{2s}L_{s}\left( x,y\right)
-L_{5s}\left( x,y\right) \right) \left( \left( -y\right) ^{2s}L_{s}\left(
x,y\right) -\left( -y\right) ^{s}L_{3s}\left( x,y\right) \right) F_{s}\left(
x,y\right) }.  \notag
\end{eqnarray}

(See also (\ref{4.30}).)

\section{\label{Sec5}Derivatives of bivariate $s$-Fibopolynomials}

The partial derivatives of bivariate Lucas polynomials $L_{n}\left(
x,y\right) $ are given by the well-known formulas%
\begin{equation}
\frac{\partial }{\partial x}L_{n}\left( x,y\right) =nF_{n}\left( x,y\right) 
\text{ \ \ \ \ \ \ and \ \ \ \ \ \ }\frac{\partial }{\partial y}L_{n}\left(
x,y\right) =nF_{n-1}\left( x,y\right) .  \label{5.1}
\end{equation}

We will use some of the results obtained in sections \ref{Sec2} and \ref%
{Sec4}, together with (\ref{5.1}), in order to obtain formulas for the
partial derivatives of bivariate $s$-Fibopolynomials $\binom{n}{p}%
_{F_{s}\left( x,y\right) }$.

We begin by noting that, according to (\ref{4.1}) we have that

\begin{equation}
\frac{\mathcal{Z}\left( \frac{\partial }{\partial x}\dbinom{n}{2p}%
_{F_{s}\left( x,y\right) }\right) }{\mathcal{Z}\left( \dbinom{n}{2p}%
_{F_{s}\left( x,y\right) }\right) }=-\frac{\frac{\partial }{\partial x}%
\sum_{i=0}^{2p+1}\left( -1\right) ^{\frac{\left( si+2(s+1)\right) \left(
i+1\right) }{2}}\dbinom{2p+1}{i}_{F_{s}\left( x,y\right) }y^{\frac{si\left(
i-1\right) }{2}}z^{2p+1-i}}{\sum_{i=0}^{2p+1}\left( -1\right) ^{\frac{\left(
si+2(s+1)\right) \left( i+1\right) }{2}}\dbinom{2p+1}{i}_{F_{s}\left(
x,y\right) }y^{\frac{si\left( i-1\right) }{2}}z^{2p+1-i}}.  \label{5.2}
\end{equation}

By using (\ref{2.17}) we get from (\ref{5.2}) that%
\begin{eqnarray*}
\frac{\mathcal{Z}\left( \frac{\partial }{\partial x}\dbinom{n}{2p}%
_{F_{s}\left( x,y\right) }\right) }{\mathcal{Z}\left( \dbinom{n}{2p}%
_{F_{s}\left( x,y\right) }\right) } &=&-\frac{\frac{\partial }{\partial x}%
\prod\limits_{j=0}^{p-1}\left( z^{2}-\left( -y\right) ^{sj}L_{2s\left(
p-j\right) }\left( x,y\right) z+y^{2ps}\right) }{\prod\limits_{j=0}^{p-1}%
\left( z^{2}-\left( -y\right) ^{sj}L_{2s\left( p-j\right) }\left( x,y\right)
z+y^{2ps}\right) } \\
&=&-\frac{\sum\limits_{k=0}^{p-1}\left( -\left( -y\right) ^{sk}\frac{%
\partial }{\partial x}L_{2s\left( p-k\right) }\left( x,y\right) z\right)
\prod\limits_{\substack{ j=0,  \\ j\neq k}}^{p-1}\left( z^{2}-\left(
-y\right) ^{sj}L_{2s\left( p-j\right) }\left( x,y\right) z+y^{2ps}\right) }{%
\prod\limits_{j=0}^{p-1}\left( z^{2}-\left( -y\right) ^{sj}L_{2s\left(
p-j\right) }\left( x,y\right) z+y^{2ps}\right) } \\
&=&\sum_{k=0}^{p-1}\frac{\left( -y\right) ^{sk}2s\left( p-k\right)
F_{2s\left( p-k\right) }\left( x,y\right) z}{z^{2}-\left( -y\right)
^{sk}L_{2s\left( p-k\right) }\left( x,y\right) z+y^{2ps}},
\end{eqnarray*}%
from where%
\begin{eqnarray*}
\mathcal{Z}\left( \frac{\partial }{\partial x}\dbinom{n}{2p}_{F_{s}\left(
x,y\right) }\right) &=&\mathcal{Z}\left( \dbinom{n}{2p}_{F_{s}\left(
x,y\right) }\right) \sum_{k=0}^{p-1}\frac{\left( -y\right) ^{sk}2s\left(
p-k\right) F_{2s\left( p-k\right) }\left( x,y\right) z}{z^{2}-\left(
-y\right) ^{sk}L_{2s\left( p-k\right) }\left( x,y\right) z+y^{2ps}} \\
&=&\mathcal{Z}\left( \dbinom{n}{2p}_{F_{s}\left( x,y\right) }\right)
\sum_{k=0}^{p-1}\frac{2s\left( p-k\right) F_{2s\left( p-k\right) }\left(
x,y\right) \frac{z}{\left( -y\right) ^{sk}}}{\left( \left( \frac{z}{y^{sk}}%
\right) ^{2}-L_{2s\left( p-k\right) }\left( x,y\right) \frac{z}{\left(
-y\right) ^{sk}}+y^{2ps-2sk}\right) } \\
&=&\mathcal{Z}\left( \dbinom{n}{2p}_{F_{s}\left( x,y\right) }\right)
\sum_{k=0}^{p-1}2s\left( p-k\right) \mathcal{Z}\left( \left( -y\right)
^{skn}F_{2s\left( p-k\right) n}\left( x,y\right) \right) ,
\end{eqnarray*}%
and finally, the convolution theorem gives us%
\begin{equation}
\frac{\partial }{\partial x}\dbinom{n}{2p}_{F_{s}\left( x,y\right) }=2s%
\dbinom{n}{2p}_{F_{s}\left( x,y\right) }\ast \sum_{k=0}^{p-1}\left(
p-k\right) \left( -y\right) ^{skn}F_{2s\left( p-k\right) n}\left( x,y\right)
.  \label{5.4}
\end{equation}

Similarly, from (\ref{4.1}) and (\ref{2.18}) we have that%
\begin{eqnarray*}
\frac{\mathcal{Z}\left( \frac{\partial }{\partial x}\dbinom{n}{2p-1}%
_{F_{s}\left( x,y\right) }\right) }{\mathcal{Z}\left( \dbinom{n}{2p-1}%
_{F_{s}\left( x,y\right) }\right) } &=&\frac{-\frac{\partial }{\partial x}%
\prod\limits_{j=0}^{p-1}\left( z^{2}-\left( -y\right) ^{sj}L_{s\left(
2p-1-2j\right) }\left( x,y\right) z+\left( -y\right) ^{\left( 2p-1\right)
s}\right) }{\prod\limits_{j=0}^{p-1}\left( z^{2}-\left( -y\right)
^{sj}L_{s\left( 2p-1-2j\right) }\left( x,y\right) z+\left( -y\right)
^{\left( 2p-1\right) s}\right) } \\
&=&\sum_{k=0}^{p-1}\frac{\left( -y\right) ^{sk}s\left( 2p-1-2k\right)
F_{s\left( 2p-1-2k\right) }\left( x,y\right) z}{z^{2}-\left( -y\right)
^{sk}L_{s\left( 2p-1-2k\right) }\left( x,y\right) z+\left( -y\right)
^{\left( 2p-1\right) s}}.
\end{eqnarray*}

Then%
\begin{equation*}
\mathcal{Z}\left( \frac{\partial }{\partial x}\dbinom{n}{2p-1}_{F_{s}\left(
x,y\right) }\right) =\mathcal{Z}\left( \dbinom{n}{2p-1}_{F_{s}\left(
x,y\right) }\right) \sum_{k=0}^{p-1}\frac{\left( -y\right) ^{sk}s\left(
2p-1-2k\right) F_{s\left( 2p-1-2k\right) }\left( x,y\right) z}{z^{2}-\left(
-y\right) ^{sk}L_{s\left( 2p-1-2k\right) }\left( x,y\right) z+\left(
-y\right) ^{\left( 2p-1\right) s}},
\end{equation*}%
and from the convolution theorem we get%
\begin{equation}
\frac{\partial }{\partial x}\dbinom{n}{2p-1}_{F_{s}\left( x,y\right) }=s%
\dbinom{n}{2p-1}_{F_{s}\left( x,y\right) }\ast \sum_{k=0}^{p-1}\left(
2p-1-2k\right) \left( -y\right) ^{skn}F_{s\left( 2p-1-2k\right) n}\left(
x,y\right) .  \label{5.5}
\end{equation}

Formulas (\ref{5.4}) and (\ref{5.5}) for the derivatives with respect to $x$
of the bivariate $s$-Fibopolynomials $\binom{n}{2p}_{F_{s}\left( x,y\right)
} $ and $\binom{n}{2p-1}_{F_{s}\left( x,y\right) }$ can be written together
as%
\begin{equation*}
\frac{\partial }{\partial x}\dbinom{n}{p}_{F_{s}\left( x,y\right) }=s\dbinom{%
n}{p}_{F_{s}\left( x,y\right) }\ast \sum_{k=0}^{\left\lfloor \frac{p+1}{2}%
\right\rfloor -1}\left( p-2k\right) \left( -y\right) ^{skn}F_{s\left(
p-2k\right) n}\left( x,y\right) .
\end{equation*}

Some examples are%
\begin{equation}
\frac{\partial }{\partial x}\dbinom{n}{2}_{F_{s}\left( x,y\right)
}=2s\sum_{t=0}^{n}F_{2st}\left( x,y\right) \dbinom{n-t}{2}_{F_{s}\left(
x,y\right) }.  \label{5.7}
\end{equation}%
\begin{equation}
\frac{\partial }{\partial x}\dbinom{n}{3}_{F_{s}\left( x,y\right)
}=s\sum_{t=0}^{n}\left( 3F_{3st}\left( x,y\right) +\left( -y\right)
^{st}F_{st}\left( x,y\right) \right) \dbinom{n-t}{3}_{F_{s}\left( x,y\right)
}.  \label{5.8}
\end{equation}%
\begin{equation}
\frac{\partial }{\partial x}\dbinom{n}{4}_{F_{s}\left( x,y\right)
}=2s\sum_{t=0}^{n}\left( 2F_{4st}\left( x,y\right) +\left( -y\right)
^{st}F_{2st}\left( x,y\right) \right) \dbinom{n-t}{4}_{F_{s}\left(
x,y\right) }.  \label{5.9}
\end{equation}

Let us consider now derivatives with respect to $y$ of bivariate $s$%
-Fibopolynomials. From (\ref{4.1}) and (\ref{2.17}) we have that%
\begin{eqnarray}
&&z\frac{\mathcal{Z}\left( \frac{\partial }{\partial y}\dbinom{n}{2p}%
_{F_{s}\left( x,y\right) }\right) }{\mathcal{Z}\left( \dbinom{n}{2p}%
_{F_{s}\left( x,y\right) }\right) }  \label{5.91} \\
&=&-z\frac{\frac{\partial }{\partial y}\left( \left( z-\left( -y\right)
^{sp}\right) \prod\limits_{j=0}^{p-1}\left( z^{2}-\left( -y\right)
^{sj}L_{2s\left( p-j\right) }\left( x,y\right) z+y^{2ps}\right) \right) }{%
\left( z-\left( -y\right) ^{sp}\right) \prod\limits_{j=0}^{p-1}\left(
z^{2}-\left( -y\right) ^{sj}L_{2s\left( p-j\right) }\left( x,y\right)
z+y^{2ps}\right) }  \notag \\
&=&-\frac{sp\left( -y\right) ^{sp-1}z}{z-\left( -y\right) ^{sp}}  \notag \\
&&+\sum_{k=0}^{p-1}\frac{\left( -y\right) ^{sk}2s\left( p-k\right)
F_{2s\left( p-k\right) -1}\left( x,y\right) z^{2}-sk\left( -y\right)
^{sk-1}L_{2s\left( p-k\right) }\left( x,y\right) z^{2}-2psy^{2ps-1}z}{%
z^{2}-\left( -y\right) ^{sk}L_{2s\left( p-k\right) }\left( x,y\right)
z+y^{2ps}}.  \notag
\end{eqnarray}

Observe that%
\begin{eqnarray*}
&&\frac{\left( -y\right) ^{sk}2s\left( p-k\right) F_{2s\left( p-k\right)
-1}\left( x,y\right) z^{2}-sk\left( -y\right) ^{sk-1}L_{2s\left( p-k\right)
}\left( x,y\right) z^{2}-2psy^{2ps-1}z}{z^{2}-\left( -y\right)
^{sk}L_{2s\left( p-k\right) }\left( x,y\right) z+y^{2ps}} \\
&=&2sp\left( -y\right) ^{sk}\left( \frac{z}{\left( -y\right) ^{sk}}\right) 
\frac{F_{2s\left( p-k\right) -1}\left( x,y\right) \left( \frac{z}{\left(
-y\right) ^{sk}}\right) -y^{2ps-2sk-1}}{\left( \frac{z}{\left( -y\right)
^{sk}}\right) ^{2}-L_{2s\left( p-k\right) }\left( x,y\right) \frac{z}{\left(
-y\right) ^{sk}}+y^{2ps-2sk}} \\
&&+\frac{sk}{y}\frac{L_{2s\left( p-k\right) }\left( x,y\right)
-2yF_{2s\left( p-k\right) -1}\left( x,y\right) }{F_{2s\left( p-k\right)
}\left( x,y\right) }z\frac{F_{2s\left( p-k\right) }\left( x,y\right) \frac{z%
}{\left( -y\right) ^{sk}}}{\left( \frac{z}{\left( -y\right) ^{sk}}\right)
^{2}-L_{2s\left( p-k\right) }\left( x,y\right) \frac{z}{\left( -y\right)
^{sk}}+y^{2ps-2sk}}.
\end{eqnarray*}

With (\ref{1.8}) we can see that%
\begin{equation*}
\frac{L_{2s\left( p-k\right) }\left( x,y\right) -2yF_{2s\left( p-k\right)
-1}\left( x,y\right) }{F_{2s\left( p-k\right) }\left( x,y\right) }=x.
\end{equation*}

Then, according to (\ref{2.8}) we can write (\ref{5.91}) as%
\begin{equation*}
z\frac{\mathcal{Z}\left( \frac{\partial }{\partial y}\dbinom{n}{2p}%
_{F_{s}\left( x,y\right) }\right) }{\mathcal{Z}\left( \dbinom{n}{2p}%
_{F_{s}\left( x,y\right) }\right) }=-\frac{sp\left( -y\right) ^{sp-1}z}{%
z-\left( -y\right) ^{sp}}+\sum\limits_{k=0}^{p-1}\left( 
\begin{array}{c}
2sp\left( -y\right) ^{sk}\mathcal{Z}\left( \left( -y\right)
^{skn}F_{2s\left( p-k\right) \left( n+1\right) -1}\left( x,y\right) \right)
\\ 
+\frac{skx}{y}\mathcal{Z}\left( \left( -y\right) ^{sk\left( n+1\right)
}F_{2s\left( p-k\right) \left( n+1\right) }\left( x,y\right) \right)%
\end{array}%
\right) ,
\end{equation*}%
from where we get finally that%
\begin{eqnarray}
&&\frac{\partial }{\partial y}\dbinom{n+1}{2p}_{F_{s}\left( x,y\right) }
\label{5.10} \\
&=&sp\dbinom{n}{2p}_{F_{s}\left( x,y\right) }\ast \left(
\sum\limits_{k=0}^{p-1}\left( -y\right) ^{sk\left( n+1\right) }\left( 
\begin{array}{c}
2F_{2s\left( p-k\right) \left( n+1\right) -1}\left( x,y\right) \\ 
+ \\ 
\frac{kx}{py}F_{2s\left( p-k\right) \left( n+1\right) }\left( x,y\right)%
\end{array}%
\right) -\frac{\left( -y\right) ^{sp\left( n+1\right) -1}}{p}\right) . 
\notag
\end{eqnarray}

Similarly, from (\ref{4.1}) and (\ref{2.18}) we have that%
\begin{eqnarray*}
&&\frac{z\mathcal{Z}\left( \frac{\partial }{\partial y}\dbinom{n}{2p-1}%
_{F_{s}\left( x,y\right) }\right) }{\mathcal{Z}\left( \dbinom{n}{2p-1}%
_{F_{s}\left( x,y\right) }\right) } \\
&=&\frac{-z\frac{\partial }{\partial y}\prod\limits_{j=0}^{p-1}\left(
z^{2}-\left( -y\right) ^{sj}L_{s\left( 2p-1-2j\right) }\left( x,y\right)
z+\left( -y\right) ^{\left( 2p-1\right) s}\right) }{\prod\limits_{j=0}^{p-1}%
\left( z^{2}-\left( -y\right) ^{sj}L_{s\left( 2p-1-2j\right) }\left(
x,y\right) z+\left( -y\right) ^{\left( 2p-1\right) s}\right) } \\
&=&\sum_{k=0}^{p-1}z\frac{%
\begin{array}{c}
\left( -y\right) ^{sk}s\left( 2p-1-2k\right) F_{s\left( 2p-1-2k\right)
-1}\left( x,y\right) z-sk\left( -y\right) ^{sk-1}L_{s\left( 2p-1-2j\right)
}\left( x,y\right) z \\ 
+\left( 2p-1\right) s\left( -y\right) ^{s\left( 2p-1\right) -1}%
\end{array}%
}{z^{2}-\left( -y\right) ^{sk}L_{s\left( 2p-1-2k\right) }\left( x,y\right)
z+\left( -y\right) ^{\left( 2p-1\right) s}} \\
&=&\frac{s\left( 2p-1\right) }{\left( -y\right) ^{sk}}+\sum_{k=0}^{p-1}%
\left( 
\begin{array}{c}
\frac{\frac{z}{\left( -y\right) ^{sk}}\left( F_{s\left( 2p-1-2k\right)
-1}\left( x,y\right) \frac{z}{\left( -y\right) ^{sk}}+\left( -y\right)
^{s\left( 2p-1-2k\right) -1}\right) }{\left( \left( \frac{z}{\left(
-y\right) ^{sk}}\right) ^{2}-L_{s\left( 2p-1-2k\right) }\left( x,y\right) 
\frac{z}{\left( -y\right) ^{sk}}+\left( -y\right) ^{\left( 2p-1-2k\right)
s}\right) } \\ 
\\ 
+\frac{sk}{y}z\frac{L_{s\left( 2p-1-2j\right) }\left( x,y\right)
-2yF_{s\left( 2p-1-2k\right) -1}\left( x,y\right) }{\left( \frac{z}{\left(
-y\right) ^{sk}}\right) ^{2}-L_{s\left( 2p-1-2k\right) }\left( x,y\right) 
\frac{z}{\left( -y\right) ^{sk}}+\left( -y\right) ^{\left( 2p-1-2k\right) s}}%
\frac{z}{\left( -y\right) ^{sk}}%
\end{array}%
\right)
\end{eqnarray*}%
from where we obtain that%
\begin{eqnarray}
&&\frac{\partial }{\partial y}\dbinom{n+1}{2p-1}_{F_{s}\left( x,y\right) }
\label{5.11} \\
&=&s\left( 2p-1\right) \!\dbinom{n}{2p-1}_{F_{s}\left( x,y\right) }\ast
\!\sum_{k=0}^{p-1}\!\left( -y\right) ^{sk\left( n+1\right) }\left( 
\begin{array}{c}
\!F_{s\left( 2p-1-2k\right) \left( n+1\right) -1}\!\left( x,y\right) \\ 
+ \\ 
\frac{kx}{\left( 2p-1\right) y}F_{s\left( 2p-1-2k\right) \left( n+1\right)
}\!\left( x,y\right)%
\end{array}%
\right) .  \notag
\end{eqnarray}

Formulas (\ref{5.10}) and (\ref{5.11}) can be written together as%
\begin{eqnarray*}
&&\frac{\partial }{\partial y}\dbinom{n+1}{p}_{F_{s}\left( x,y\right) } \\
&=&sp\dbinom{n}{p}_{F_{s}\left( x,y\right) }\ast \left(
\sum_{k=0}^{\left\lfloor \frac{p+1}{2}\right\rfloor -1}\left( -y\right)
^{sk\left( n+1\right) }\left( 
\begin{array}{c}
F_{s\left( p-2k\right) \left( n+1\right) -1}\left( x,y\right) \\ 
+ \\ 
\frac{kx}{py}F_{s\left( p-2k\right) \left( n+1\right) }\left( x,y\right)%
\end{array}%
\right) -\frac{1+\left( -1\right) ^{p}}{4}\left( -y\right) ^{\frac{sp}{2}%
\left( n+1\right) -1}\right) .
\end{eqnarray*}

Some examples are%
\begin{equation*}
\frac{\partial }{\partial y}\dbinom{n+1}{2}_{F_{s}\left( x,y\right)
}=s\sum_{t=0}^{n}\left( 2F_{2s\left( t+1\right) -1}\left( x,y\right) -\left(
-y\right) ^{s\left( t+1\right) -1}\right) \dbinom{n-t}{2}_{F_{s}\left(
x,y\right) }.
\end{equation*}%
\begin{eqnarray}
&&\frac{\partial }{\partial y}\dbinom{n+1}{3}_{F_{s}\left( x,y\right) }
\label{5.14} \\
&=&3s\sum_{t=0}^{n}\left( F_{3s\left( t+1\right) -1}\left( x,y\right)
+\left( -y\right) ^{s\left( t+1\right) }\left( F_{s\left( t+1\right)
-1}\left( x,y\right) +\frac{x}{3y}F_{s\left( t+1\right) }\left( x,y\right)
\right) \right) \dbinom{n-t}{3}_{F_{s}\left( x,y\right) }.  \notag
\end{eqnarray}%
\begin{eqnarray}
&&\frac{\partial }{\partial y}\dbinom{n+1}{4}_{F_{s}\left( x,y\right) }
\label{5.15} \\
&=&4s\sum_{t=0}^{n}\left( F_{4s\left( t+1\right) -1}\left( x,y\right)
+\left( -y\right) ^{s\left( t+1\right) }\left( F_{2s\left( t+1\right)
-1}\left( x,y\right) +\frac{x}{4y}F_{2s\left( t+1\right) }\left( x,y\right)
\right) -\frac{\left( -y\right) ^{2s\left( t+1\right) -1}}{2}\right)  \notag
\\
&&\times \dbinom{n-t}{4}_{F_{s}\left( x,y\right) }.  \notag
\end{eqnarray}

Summarizing, we have proved the following result.

\begin{proposition}
The partial derivatives of the bivariate $s$-Fibopolynomial $\binom{n}{p}%
_{F_{s}\left( x,y\right) }$ can be written as%
\begin{equation}
\frac{\partial }{\partial x}\dbinom{n}{p}_{F_{s}\left( x,y\right) }=s\dbinom{%
n}{p}_{F_{s}\left( x,y\right) }\ast \sum\limits_{k=0}^{\lfloor \left(
p+1\right) /2\rfloor -1}\left( p-2k\right) \left( -y\right) ^{skn}F_{s\left(
p-2k\right) n}\left( x,y\right) .  \label{5.6}
\end{equation}%
\begin{eqnarray}
&&\frac{\partial }{\partial y}\dbinom{n+1}{p}_{F_{s}\left( x,y\right) }
\label{5.12} \\
&=&sp\dbinom{n}{p}_{F_{s}\left( x,y\right) }\ast \left( 
\begin{array}{c}
\sum\limits_{k=0}^{\lfloor \left( p+1\right) /2\rfloor -1}\left( -y\right)
^{sk\left( n+1\right) }\left( F_{s\left( p-2k\right) \left( n+1\right)
-1}\left( x,y\right) +\frac{kx}{py}F_{s\left( p-2k\right) \left( n+1\right)
}\left( x,y\right) \right) \\ 
\\ 
-\frac{1+\left( -1\right) ^{p}}{4}\left( -y\right) ^{\frac{sp}{2}\left(
n+1\right) -1}%
\end{array}%
\right) .  \notag
\end{eqnarray}
\end{proposition}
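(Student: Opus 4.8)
The plan is to reduce both formulas to a logarithmic-derivative computation on the rational functions produced by the $Z$ transform. Since $\mathcal{Z}$ is linear and injective, it is enough to identify $\mathcal{Z}\!\left(\frac{\partial}{\partial x}\binom{n}{p}_{F_s(x,y)}\right)$ and $\mathcal{Z}\!\left(\frac{\partial}{\partial y}\binom{n+1}{p}_{F_s(x,y)}\right)$ and then invert. By Corollary \ref{Cor4.1}, $\mathcal{Z}\!\left(\binom{n}{p}_{F_s(x,y)}\right) = (-1)^{s+1}z / D_{s,p+1}(x,y;z)$; since the indeterminate $z$ carries no dependence on $x$ or $y$, differentiating this quotient gives $\mathcal{Z}\!\left(\partial_\bullet\binom{n}{p}_{F_s(x,y)}\right) = -\,\mathcal{Z}\!\left(\binom{n}{p}_{F_s(x,y)}\right)\,\big(\partial_\bullet D_{s,p+1}(x,y;z)\big)/D_{s,p+1}(x,y;z)$, the second factor being the logarithmic derivative of the denominator polynomial. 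I would handle the cases of even and odd lower index separately — writing the Fibopolynomial as $\binom{n}{2p}$ and $\binom{n}{2p-1}$ respectively — using the explicit factorizations (\ref{2.17}) and (\ref{2.18}), and at the end merge the two into (\ref{5.6}) and (\ref{5.12}) via the $\lfloor(p+1)/2\rfloor$ bookkeeping.

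For $\frac{\partial}{\partial x}$ the work is lighter. In (\ref{2.17}) and (\ref{2.18}) the only $x$-dependence lives in the middle coefficients $(-y)^{sj}L_{2s(p-j)}(x,y)$, $(-y)^{sj}L_{s(2p-1-2j)}(x,y)$ of the quadratic factors; the linear factor $z-(-y)^{sp}$ and the constant terms $y^{2ps}$, $(-y)^{(2p-1)s}$ are $x$-free. Differentiating the logarithm factor by factor and using $\partial_x L_n(x,y)=nF_n(x,y)$ from (\ref{5.1}), each quadratic contributes a summand of the form $c_k\, z / \big(z^2-(-y)^{sk}L_{2s(p-k)}(x,y)z+y^{2ps}\big)$. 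Rescaling $z\mapsto z/(-y)^{sk}$ and invoking (\ref{2.2}) together with (\ref{2.10}) identifies this with $\mathcal{Z}\!\big((-y)^{skn}F_{2s(p-k)n}(x,y)\big)$; multiplying by $\mathcal{Z}\!\big(\binom{n}{2p}_{F_s(x,y)}\big)$ and applying the convolution theorem (\ref{2.3}) yields (\ref{5.4}), and the odd-index analogue yields (\ref{5.5}); their common form is (\ref{5.6}).

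For $\frac{\partial}{\partial y}$ there are three sources of $y$-dependence in $D_{s,p+1}$: the linear factor $z-(-y)^{sp}$ (present only in the even case), the middle coefficients $(-y)^{sj}L_{\cdot}(x,y)$, and the constant terms. Differentiating the logarithm therefore produces, for each quadratic, a numerator containing both a $z^2$-term — from $\partial_y$ of the middle coefficient, which brings in $F_{2s(p-k)-1}$ through $\partial_y L_n = nF_{n-1}$ — and a $z$-term from $\partial_y$ of the constant. The decisive manoeuvre is to regroup this numerator, after the rescaling $z\mapsto z/(-y)^{sk}$, into exactly the shape to which (\ref{2.8}) applies: one part becomes $\mathcal{Z}\!\big((-y)^{skn}F_{2s(p-k)(n+1)-1}(x,y)\big)$ and the other a multiple of $\mathcal{Z}\!\big((-y)^{sk(n+1)}F_{2s(p-k)(n+1)}(x,y)\big)$, the multiplier of the latter turning out to be $x$ because $L_{2s(p-k)}(x,y)-2yF_{2s(p-k)-1}(x,y)=xF_{2s(p-k)}(x,y)$, a special case of (\ref{1.8}). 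The linear factor in the even case supplies the isolated term $-\tfrac{1}{p}(-y)^{sp(n+1)-1}$. Inverting with the convolution theorem produces (\ref{5.10}) and (\ref{5.11}), and reconciling the parities — the factor $(1+(-1)^p)/4$ precisely killing the linear-factor contribution when $p$ is odd — gives (\ref{5.12}).

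The step I expect to be the main obstacle is this $\frac{\partial}{\partial y}$ regrouping: one must match each quadratic's differentiated numerator against the template in (\ref{2.8}) with the correct split into $F_{m-1}$ and $F_m$ pieces (deploying (\ref{1.8}) to pin down the coefficient $x$), carry the $(-y)^{sk(n+1)}$ index shifts correctly through the rescaling, and then fuse the two index parities so that the separate even/odd identities collapse into the single floor-function formula. By contrast, the $\frac{\partial}{\partial x}$ computation and all the inversions are routine once the $Z$-transform dictionary of Section \ref{Sec2} is in place.
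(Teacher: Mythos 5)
Your proposal is correct and follows essentially the same route as the paper: taking the logarithmic derivative of the denominator $D_{s,p+1}(x,y;z)$ of the $Z$ transform from Corollary \ref{Cor4.1}, using the factorizations (\ref{2.17})--(\ref{2.18}) and the derivative formulas (\ref{5.1}), rescaling $z\mapsto z/(-y)^{sk}$ to recognize the transforms (\ref{2.8})/(\ref{2.10}), invoking the identity $L_{2s(p-k)}-2yF_{2s(p-k)-1}=xF_{2s(p-k)}$ from (\ref{1.8}), and finishing with the convolution theorem before merging the even and odd cases into (\ref{5.6}) and (\ref{5.12}). No gaps beyond the routine bookkeeping you already flag.
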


\begin{remark}
The case $p=1$ of (\ref{5.6}) and (\ref{5.12}) gives us explicit formulas
for the partial derivatives of bivariate $s$-Fibonacci polynomials, namely%
\begin{equation}
\frac{\partial }{\partial x}F_{sn}\left( x,y\right) =\frac{\frac{\partial }{%
\partial x}F_{s}\left( x,y\right) }{F_{s}\left( x,y\right) }F_{sn}\left(
x,y\right) +sF_{sn}\left( x,y\right) \ast F_{sn}\left( x,y\right) ,
\label{5.16}
\end{equation}%
and%
\begin{equation}
\frac{\partial }{\partial y}F_{s\left( n+1\right) }\left( x,y\right) =\frac{%
\frac{\partial }{\partial y}F_{s}\left( x,y\right) }{F_{s}\left( x,y\right) }%
F_{s\left( n+1\right) }\left( x,y\right) +sF_{sn}\left( x,y\right) \ast
F_{s\left( n+1\right) -1}\left( x,y\right) ,  \label{5.17}
\end{equation}%
respectively. The case $s=1$ of (\ref{5.16}) and (\ref{5.17}) gives us%
\begin{equation}
\frac{\partial }{\partial x}F_{n}\left( x,y\right) =\frac{\partial }{%
\partial y}F_{n+1}\left( x,y\right) =F_{n}\left( x,y\right) \ast F_{n}\left(
x,y\right) .  \label{5.18}
\end{equation}%
or, according to (\ref{2.156}),%
\begin{equation}
\frac{\partial }{\partial x}F_{n}\left( x,y\right) =\frac{\partial }{%
\partial y}F_{n+1}\left( x,y\right) =\frac{1}{x^{2}+4y}\left( nL_{n}\left(
x,y\right) -xF_{n}\left( x,y\right) \right) .  \label{5.19}
\end{equation}
\end{remark}

\bigskip

\_\_\_\_\_\_\_\_\_\_\_\_\_\_\_\_\_\_\_\_\_\_\_\_\_\_\_\_\_\_\_\_\_\_\_\_\_\_%
\_\_\_\_\_\_\_\_\_\_\_\_\_\_\_\_\_\_\_\_\_\_\_\_\_\_\_\_\_\_\_\_\_\_\_\_\_\_%
\_\_\_\_\_\_\_\_\_\_\_\_\_\_\_\_\_\_\_\_\_\_\_\_\_\_\_\_\_\_\_\_\_\_\_

2000 Mathematics Subject Classification: Primary 11XX; Secondary 11B39.

Keywords: Bivariate Fibonacci and Lucas polynomials, $s$-Fibonomial
coefficients, Z transform.

\end{document}